\numberwithin{equation}{section}
\newtheorem{theorem}{Theorem}[section]
\newtheorem{lemma}[theorem]{Lemma}
\newtheorem{remark}[theorem]{Remark}
\newtheorem{definition}[theorem]{Definition}
\newtheorem{corollary}[theorem]{Corollary}
\newtheorem{conjecture}[theorem]{Conjecture}
\newcommand{\barredsum}{%
  \DOTSB\mathop{\mathpalette\@barredsum\relax}\slimits@
}
\newcommand{\@barredsum}[2]{%
  \begingroup
  \sbox\z@{$#1\sum$}%
  \setlength{\unitlength}{\dimexpr2pt+\ht\z@+\dp\z@\relax}%
  \@barredsumthickness{#1}%
  \vphantom{\@barredsumbar}%
  \ooalign{$\m@th#1\sum$\cr\hidewidth$#1\@barredsumbar$\hidewidth\cr}%
  \endgroup
}
\newcommand{\@barredsumbar}{%
  \vcenter{\hbox{\begin{picture}(0,1)\roundcap\Line(0,0)(0,1)\end{picture}}}%
}
\newcommand{\@barredsumthickness}[1]{% see https://tex.stackexchange.com/a/477200/
  \linethickness{%
    1.25\fontdimen8
      \ifx#1\displaystyle\textfont\else
      \ifx#1\textstyle\textfont\else
      \ifx#1\scriptstyle\scriptfont\else
      \scriptscriptfont\fi\fi\fi 3
  }%
}
\newcommand{\al}{\alpha}
\newcommand{\be}{\beta}
\newcommand{\Ga}{\Gamma}
\newcommand{\de}{\delta}
\newcommand{\e}{\varepsilon}
\newcommand{\ka}{\kappa}
\newcommand{\la}{\lambda}
\newcommand{\si}{\sigma}
\newcommand{\Si}{\Sigma}
\newcommand{\om}{\omega}
\newcommand{\Om}{\Omega}
\newcommand{\cs}{\mathcal S}
\newcommand{\cu}{\mathcal U}
\newcommand{\cq}{\mathcal Q}
\newcommand{\cp}{\mathcal P}
\newcommand{\co}{\mathcal O}
\newcommand{\cb}{\mathcal B}
\newcommand{\wt}{\widetilde}
\newcommand{\wh}{\widehat}
\newcommand{\ZR}{\mathbb{R}}
\newcommand{\ZT}{\mathbb{T}}
\newcommand{\ZZ}{\mathbb{Z}}
\newcommand{\ZW}{\mathbb{W}}
\newcommand{\ZS}{\mathbb{S}}
\newcommand{\Id}{{\bf{1}}}
\newcommand{\ba}{{\bf a}}
\newcommand{\bd}{{\bf d}}
\newcommand{\bn}{{\bf n}}
\newcommand{\bx}{{\bf x}}
\newcommand{\cT}{{\mathcal T}}
\newcommand{\bq}{{\bf q}}
\newcommand{\BR}{{\rm{Br}}}
\newcommand{\T}{\mathbb{T}  }
\newcommand{\1}{\mathbbm{1}}
\newcommand{\dist}{{\rm dist}}
\newcommand{\BL}{\textup{BL}}
\newcommand{\ang}{\measuredangle}
\begin{document}

\title{An improved restriction estimate in $\ZR^3$}

\author{Hong Wang} \address{ Hong Wang\\  Department of Mathematics\\ University of California, Los Angeles, USA} \email{hongwang@math.ucla.edu}

\author{Shukun Wu} \address{ Shukun Wu\\  Department of Mathematics\\ California Institute of Technology, USA} \email{skwu@caltech.edu}

\date{}
\begin{abstract}
We improve the $L^{p}\rightarrow L^p$ restriction estimate in $\ZR^3$ to the range $p>3+3/14$, based on some Kakeya type incidence estimates and the refined decoupling theorem.
\end{abstract}

\maketitle

\section{Introduction}

Let $f(\xi)$ be a function supported in the two-dimensional unit ball $B^2(0,1)$. Denote by $Ef$ be the extension operator
\begin{equation}
    Ef(x',x_3)=\int_{\ZR^3} e^{ix'\cdot\xi}e^{x_3|\xi|^2}f(\xi)d\xi,
\end{equation}
where $x=(x',x_3)\in\ZR^3$. The following conjecture was made by Stein, which is known as (Stein's) restriction conjecture (for paraboloid).
\begin{conjecture}[Restriction conjecture in $\ZR^3$]
\label{restriction-conj}
Suppose $f\in L^p(B^2(0,1))$. Then for any $p>3$,
\begin{equation}
\label{restriction-esti}
    \|Ef\|_p\leq C_p\|f\|_p.
\end{equation}
\end{conjecture}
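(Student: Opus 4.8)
The plan is to deduce Conjecture~\ref{restriction-conj} from the broad--narrow analysis of Bourgain--Guth and the polynomial-partitioning induction of Guth, now powered by the refined decoupling theorem. First I would fix a large radius $R$, normalize $\|f\|_{L^\infty}\le 1$, and reduce --- by parabolic rescaling together with Tao's $\e$-removal argument --- to proving the single-scale estimate
\begin{equation}
\|Ef\|_{L^p(B_R)}\lesssim_\e R^\e
\end{equation}
for every $p>3$ and every $\e>0$. The whole argument is then an induction on $R$, together with an auxiliary induction on a dimension parameter, the inductive hypotheses being this estimate at all smaller scales and the analogous (and, in this case, classically known) estimate for the extension operator of the parabola in $\ZR^2$.

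Second, at scale $K^{-1}$ for a large constant $K$, partition $B^2(0,1)$ into caps $\tau$ and run the broad--narrow dichotomy. At each $x\in B_R$, either $|Ef(x)|$ is essentially governed by caps clustering in a $K^{-1}$-neighborhood of a line (the narrow case), or by caps pointing in two transverse directions (the broad case). The narrow case is absorbed by the lower-dimensional restriction estimate after decoupling into tubes --- a $K^{-1}$-neighborhood of a line in the plane carries the full two-dimensional restriction exponent with room to spare --- and this is where the dimension induction feeds back in. The broad case is the genuine content, and it is handled by polynomial partitioning.

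Third, for the broad term I would, following Guth, choose a polynomial $P$ of degree $D=D(\e)$ --- or, more efficiently, an iterated family of polynomials of growing degrees as in the refinements of Guth and of Hickman--Rogers --- whose zero set $Z(P)$ cuts $B_R$ into $\sim D^3$ cells of comparable $L^p$-mass. On each cell only a $D^{-1}$-fraction of the wave packets survive, so rescaling and the inductive hypothesis close the cellular contribution with slack. For the wall contribution, i.e.\ the part of $Ef$ concentrated in the $R^{1/2}$-neighborhood of $Z(P)$, one separates the wave packets by their angle to $Z(P)$: the transverse tubes are controlled by multilinear Kakeya (Bennett--Carbery--Tao) upgraded via the polynomial Wolff axioms (Guth--Zahl, Katz--Rogers), and the tangential tubes, which concentrate near a two-dimensional variety, are handled by the refined $\ell^2$-decoupling theorem for the paraboloid (Guth--Iosevich--Ou--Wang), exploiting that only $\sim R^{1/2}$ of the relevant $R^{1/2}\times R^{1/2}\times R$ tubes fit into a slab around such a surface. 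Bookkeeping the resulting recursion and optimizing the partitioning schedule is then a delicate but essentially routine computation.

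The main obstacle --- and the reason the conjecture as stated remains open --- is reaching the endpoint $p=3$ rather than some $p>3+\delta$. Every step above currently leaks a small power: the wall estimate relies on polynomial Wolff axiom bounds rather than the sharp three-dimensional Kakeya inequality; the refined decoupling is applied over only a bounded number of scales; and recombining the broad and narrow parts loses a $K^\e$-type factor at each of the $\sim\log R/\log K$ stages. Removing these losses at the critical exponent amounts to supplying endpoint-sharp geometric inputs --- above all the Kakeya conjecture in $\ZR^3$ --- so the difficulty is not the architecture of the proof, which is by now standard, but the incidence estimates that drive it. Short of that, one instead runs the same machinery with the best inputs presently available, which is exactly what yields the range $p>3+3/14$ announced in the abstract.
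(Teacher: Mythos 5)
The statement you were asked to prove is labeled \emph{Conjecture}~\ref{restriction-conj} in the paper, and the paper does not prove it; it is a restatement of Stein's open restriction conjecture for the paraboloid in $\ZR^3$. The paper's actual contribution, Theorem~\ref{main-thm}, establishes the estimate only in the range $p>3+3/14$. There is therefore no ``paper's own proof'' of the statement against which to compare your attempt, and any proposal that claimed to prove it for all $p>3$ would have to be wrong. You correctly recognized this: your final paragraph explicitly concedes that the conjecture remains open and that the machinery you describe only delivers $p>3+3/14$, which is precisely what this paper proves.

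Your first three paragraphs are an accurate high-level summary of the Bourgain--Guth broad--narrow reduction, Guth's polynomial partitioning, and the way this paper adds refined Wolff-type hairbrush/Kakeya incidence bounds and refined decoupling to push the exponent down, so as a description of the architecture behind Theorem~\ref{main-thm} they are sound. A few of the attributions are slightly off from what this particular paper does --- for example, the transverse wall contribution here is handled through the polynomial partitioning iteration and broom estimates inherited from \cite{Wang-restriction-R3} rather than directly through multilinear Kakeya, and the key new ingredient is the two-ends refinement of Wolff's hairbrush bound (Lemma~\ref{two-ends-hairbrush}) applied at two scales, which your sketch does not single out --- but these are matters of emphasis, not errors in logic. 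The essential point stands: the statement is a conjecture, not a theorem of this paper, and you rightly declined to claim a proof.
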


Conjecture \eqref{restriction-conj} was first proved for $p>4$ by Tomas and Stein. In 1991, by relating it to the Kakeya problem, Bourgain improved the range to $p>3+7/8$ in the milestone work  \cite{Bourgain-Besicovitch}.  In 2002, Tao \cite{Tao-bilinear} improved \eqref{restriction-esti} to  $p>3+1/3$ using a two-ends argument, which was introduced by Wolff \cite{Wolff-bilinear} a bit earlier. Then in 2010, Bourgain-Guth \cite{Bourgain-Guth-Oscillatory} further improved the range to $p>3+5/17$ via a  broad-narrow decomposition. More recently, Guth \cite{Guth-R3} brought ideas from incidence geometry, the polynomial partitioning, and improved \eqref{restriction-esti} to the range $p>3+1/4$. The best known result so far was due to the first author \cite{Wang-restriction-R3}, showing \eqref{restriction-esti} for $p>3+3/13$ by combining the polynomial partitioning technique and the two-ends argument, and exploring a so-called broom structure. 

We give another small improvement based on some Kakeya estimates and the refined decoupling theorem.

\begin{theorem}
\label{main-thm}
The restriction estimate \eqref{restriction-esti} is true for $p>3+3/14$.
\end{theorem}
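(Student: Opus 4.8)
The plan is to run a polynomial-partitioning induction on scales in the spirit of Guth \cite{Guth-R3} and the first author's work \cite{Wang-restriction-R3}, but to feed into it a sharper Kakeya-type incidence input for the broom structure. Fix $R$ large, decompose $B^2(0,1)$ into $R^{-1/2}$-caps $\theta$, and write $Ef=\sum_\theta Ef_\theta$, where each $Ef_\theta$ is (after a wave-packet decomposition) concentrated on a slab of dimensions roughly $R^{1/2}\times R^{1/2}\times R$. One partitions $B(0,R)$ using a polynomial of controlled degree; on cells away from the zero set one gains by induction on a smaller ball, and on the neighbourhood of the variety one must control the ``tangential'' wave packets. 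The two-ends reduction lets us assume that no small ball of radius $R^{1-\delta}$ captures most of the mass, which is exactly the hypothesis under which broom structures can be exploited.

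The key new ingredient, replacing the $3+3/13$ bound, will be a refined Kakeya/incidence estimate governing how many $R^{1/2}$-tubes through a common ``broom handle'' can be $\nu$-tangent to an algebraic surface, combined with the \emph{refined decoupling theorem} (Guth--Iosevich--Ou--Wang type) to convert an $L^2$-orthogonality statement at scale $R^{1/2}$ into an $L^p$ gain. Concretely, I would (i) set up the broad part via the Bourgain--Guth broad-narrow decomposition, reducing to the broad case where at least two transverse caps contribute; (ii) apply bilinear/trilinear restriction (Tao \cite{Tao-bilinear}) to handle the genuinely transverse interaction, so the real work is the ``narrow'' situation where wave packets cluster along a two-dimensional algebraic surface; (iii) in that case, invoke polynomial partitioning to localize to the variety's neighbourhood, and then run the broom argument: each broom is a bush of tubes emanating from a shared handle of length about $R^{1/2+\sigma}$, and the new tangency count bounds the number of brooms meeting a given ball; (iv) apply refined decoupling at scale $R^{1/2}$ to the tangential wave packets to obtain the numerology $p>3+3/14$, iterating the induction to absorb the $R^\epsilon$ losses.

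The main obstacle I anticipate is item (iii): proving the sharpened incidence estimate for tangent tubes through a common handle, i.e. quantifying how the broom structure forces extra transversality that improves the exponent from $1/13$ to $1/14$. This is where one must carefully track the interaction between the degree of the partitioning polynomial, the parameter $\sigma$ measuring the handle length, and the two-ends parameter $\delta$; balancing these to close the induction (so that the gain at each scale exceeds the $R^{O(\epsilon)}$ and $O(\log R)$-type losses, and the degree stays admissible) is the delicate point. A secondary difficulty is bookkeeping the transition between the polynomial-partitioning cells and the refined-decoupling scale $R^{1/2}$: one needs the wave packets to remain ``honest'' tubes after localization, which requires the usual pruning of low-multiplicity tubes and a pigeonholing to a single dyadic multiplicity, and one must check that the broom handles survive this pruning. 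Once the incidence estimate is in place, the rest is the now-standard induction-on-scales optimization, and the exponent $3+3/14$ should drop out of setting the derivative of the resulting gain function to zero.
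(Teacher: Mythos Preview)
Your proposal names several of the right ingredients (polynomial partitioning in the Guth--Wang framework, broad--narrow, broom estimates, refined decoupling), but the mechanism you identify as the ``key new ingredient'' is not the one that actually produces the exponent $45/14$, and without it the induction will not close.

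The gap is this. You locate the improvement in ``a refined Kakeya/incidence estimate governing how many $R^{1/2}$-tubes through a common broom handle can be $\nu$-tangent to an algebraic surface''. That is not where the gain comes from. The paper's new input is a \emph{refined Wolff hairbrush bound} (Lemma~\ref{two-ends-hairbrush}): for a family of $\delta$-tubes with at most $m$ parallel per direction and shading of density $\lambda$ satisfying a quantitative two-ends condition, the typical multiplicity is $\lessapprox \lambda^{-1}\delta^{-1/2}m$ rather than Wolff's $\lambda^{-3/2}\delta^{-1/2}m$. This is applied not to the thin $R$-tubes themselves but to \emph{fat} $R/r^{1/2}$-tubes containing them, and the resulting multiplicity bound is played against the broom estimate via the observation that either the thin tubes are ``sticky'' (few fat tubes per direction, so small $\kappa_2$ and the broom gain $\kappa_2^{-1}$ in \eqref{broom-esti-2} is weak but the hairbrush bound on $\sigma_1$ is strong) or they are not (and then the broom estimate already wins). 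The numerology $45/14$ falls out of optimizing this tension, not from a tangent-tube count through a handle.

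Equally essential, and absent from your outline, is the dichotomy at $r\sim R^{2/3}$ between the scale of the dominant fat surface and the ambient scale. For $R^{1/2}\le r\le R^{2/3}$ the argument above (Section~\ref{first-method}) suffices. For $r\ge R^{2/3}$ a different scheme is needed (Section~\ref{second-method}): one pigeonholes on how many fat surfaces $S_t$ meet a typical $R^{1/2}$-ball; if many, a bilinear estimate plus a square-function bound controlled again by the refined hairbrush gives the result; if few, the refined decoupling theorem is applied directly to $Eg$ on those balls. Your proposal uses refined decoupling only at the tangential step and has no analogue of this split, nor of the quantitative two-ends reductions (Section~\ref{two-ends-section}) that are required to make the hairbrush bound available at both scales. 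Without these pieces the bookkeeping you describe cannot be balanced to reach $p>45/14$.
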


Using an epsilon removal argument by Tao \cite{Tao-BR-restriction}, it suffices to prove that for any $p>3+3/14$ and   $\e>0$, there exists a constant $C_{\e}$ such that the following holds: for $R>1$,
\begin{equation}
\label{epsilon-removal}
    \|Ef\|_{L^p(B_R)}\leq C_\e R^\e\|f\|_p.
\end{equation}
For any $R^{-1/2}$-ball $\theta$ in $B^2(0,1)$,  let $f_\theta=f\Id_\theta$. In the paper we will prove 
\begin{equation}
\label{main-esti}
    \|Ef\|_{L^p(B_R)}\leq C_\e R^\e\|f\|_2^{2/p}\sup_{\theta}\|f_\theta\|_{L^2_{ave}}^{1-2/p},
\end{equation}
where $\|f_\theta\|_{L^2_{ave}}^2=|\theta|^{-1}\|f_\theta\|_2^2$. Note that \eqref{main-esti}  implies \eqref{epsilon-removal} by (restricted type) real interpolation, and hence Theorem \ref{main-thm}.

\smallskip

Bourgain \cite{Bourgain-Besicovitch} originates the idea of  studying the restriction problem using wave packets. A wave packet serves as a building block for $Ef$, and is essentially supported  in a long thin tube. To this end, it is important to understand how different wave packets interact with each other. An effective tool to understand the interaction is the polynomial partitioning introduced by Guth \cite{Guth-R3}. Broadly speaking, polynomial partitioning (iteration) allows one to partition the space into cells, so that in each cell $Ef$ contributes roughly the same, and there is a certain algebraic constraint among all the cells that controls their interaction via wave packets. In addition to this algebraic constraint, the oscillation of wave packets also plays an important role, which is usually handled by induction on scales.

The main idea of this paper is to apply a refined Wolff's hairbrush estimate (at two different scales) to bound the number of tubes through each cell when the cells are relatively concentrated and apply the refined decoupling theorem (\cite{GIOW} and independently by Du and Zhang) when the cells are spread-out.

Let us briefly explain the refined Wolff's hairbrush estimate. Let $\mathcal{T}$ be a set of $\delta$-tubes that has less than $m\geq 1$ tubes in each $\delta$-separated direction. A shading $Y: \mathcal{T}\rightarrow \mathbb{R}^3$ is a map such that $Y(T)\subset T$. Suppose $|Y(T)|/|T|\sim \lambda$ for each $T\in \mathcal{T}$. The refined Wolff's hairbrush estimate says that if the shading on each tube is ``two-ends'', then for a typical point in $\bigcup Y(T)$, the multiplicity is $\lessapprox \lambda^{-1}\delta^{-1/2}m$, improving upon Wolff's original bound $\lessapprox \lambda^{-3/2}\delta^{-1/2}m.$

Our proof is based on the framework built in \cite{Wang-restriction-R3}.  Morally speaking, after  polynomial partitioning, the cells are organized into a collection of fat surfaces $S$, which is the $r^{1/2}$--neighborhood of a (low degree) algebraic surface intersecting a ball of radius $r$, for some  $1\leq r \leq R.$

If $r\leq R^{1/2}$, then it is already proved in \cite{Wang-restriction-R3} that the restriction estimate holds for $p> 3.20$, which is better than what we currently prove. 

If $R^{1/2}<r<R^{2/3}$, then we observe that the wave packets of $Ef$ need to be ``sticky'', otherwise we can obtain improved ``broom estimate''.  Here ``sticky'' means that the wave packets 
pointing in nearby directions (within an $r^{-1/2}$-directional cap) are contained in a small number of parallel $R r^{-1/2}$-tubes. In this case, we apply the aforementioned refined hairbrush bound to the set of $R r^{-1/2}$-tubes and conclude that the number of those tubes through each $S$ is small, which is an improvement over the polynomial Wolff axiom. 

The case when $r \geq R^{2/3}$ is more involved. Roughly speaking, if the cells (who have a small diameter $R^{O(\delta)}$) are spread-out among many $R^{1/2}$-balls, then we apply refined decoupling estimate.  Otherwise, we apply the bilinear restriction estimate locally and then use a square function to control all local contributions. The square function will further be estimated using the refined hairbrush bound.

\medskip

\noindent {\bf Organization of the paper.} The paper is organized as the following: Section 2 contains several technical results, for instance, pruning of wave packets, a refined Wolff's hairbrush estimate. We also review the polynomial partitioning in this section. Section 3 and 4 contains some quantitative two-ends reductions. In Section 5 we deal with the case $r\leq R^{2/3}$ and the case $r\geq R^{2/3}$ is discussed in Section 6.

\medskip

\noindent
{\bf Acknowledgement.} We  would like to thank Larry Guth and Ruixiang Zhang for helpful discussions on Lemma \ref{polynomial-plane-lemma}. The second author would like to thank Xiaochun Li for discussions about Lemma \ref{planar-refinement-lem}, and would like to thank Nets Katz for discussions about Lemma \ref{two-ends-hairbrush}.

\medskip

\noindent{\bf Notations: }

\noindent$\bullet$ We normalize $\|f\|_2=1$ in the whole paper.

\noindent$\bullet$ We use $d(\theta')$ to denote the diameter of a small ball $\theta'$ in $\ZR^2$ (or a small cap $\theta'$ on the paraboloid). Sometimes when we say $\theta'$ is a ``scale-$r$" directional cap, we mean $d(\theta')=r^{-1/2}$. Denote by $\Theta[r]$ the collection of scale-$r$ directional caps. The letter $\theta$ is reserved for $R^{-1/2}$-balls (scale-$R$ directional caps).

\noindent$\bullet$ We write $A\lessapprox B$ if $A\leq C_\be R^{\be} B$ for all small $\be>0$. 

\noindent$\bullet$ We write $A=\Om(B)$ if $A\geq cB$ for some absolute constant $c>0$.

\noindent$\bullet$ Let $\text{Rel}\in\{=,\sim,\leq,\lesssim,\lessapprox\}$ be a binary relation. To avoid abundant notations that handle rapidly decreasing terms, if $a(R),b$ are two real numbers, then in this paper $a(R)~\text{Rel}~b$ may occasionally (when Schwartz tails appear) mean
\begin{equation}
    a(R)~\text{Rel}~(b+C_{\eta}R^{-\eta}) \;\; \text{for all $R \geq 1$ and $\eta>0$}.
\end{equation}

\noindent$\bullet$ For a set $\cq$ we define $\bigcup_\cq=\bigcup_{Q\in\cq}Q$.

\noindent$\bullet$ Here are some numerical factors we will use in the paper: $d$ is a large constant depending on $\e$, $\de=\e^{3}$, $\e_0=\e^{10}$, $\e_0'=\e^{15}$.

\section{Preliminary tools and lemmas}

\subsection{Wave packet and its pruning}
\label{wp-pruning-section}

The scale-$R$ wave packet decomposition is
\begin{equation}
\label{wp-decomposition}
    f=\sum_\theta f_\theta=\sum_{\theta}(\psi_\theta f)=\sum_{\theta,v}(\eta_v)^{\vee}\ast(\psi_\theta f),
\end{equation}
where $\{\theta\}$ is a collection of finitely overlapping $R^{-1/2}$-balls in $B^2(0,1)$ and $\psi_\theta$ is a bump function associated to $\theta$; $v\in R^\frac{1+\de}{2}\ZZ^2$ and $\{\eta_v\}$ is a smooth partition of unity of $\ZR^2$ with compact Fourier support. Such wave packet decomposition can be found in \cite{Wang-restriction-R3} Section 2 (see also \cite{Guth-R3} Section 2).

Here is a key feature of wave packets: Given a function $f$ in $\ZR^n$, if $\wh{f}$ is supported in on a unit ball, then $|f|\Id_B$ is essentially constant (in the sense of averaging) for any unit ball $B\subset\ZR^n$. That is, one would expect $\|f\Id_B\|_p\approx \|f\Id_B\|_\infty$ for any $1\leq p\leq \infty$ (see for instance, \cite{Wang-restriction-R3} Lemma 2.6, 2.7). This leads to the following observation: $|Ef_{\theta}\Id_T|$ is essentially constant on every scale-$R$ tube $T$ in the form
\begin{equation}
    T=\{x=(x',x_3)\in B_R:|x'+2x_3c_\theta+v|\leq R^{1/2+\de}\},
\end{equation}
where $c_\theta$ is the center of $\theta$, and $v$ is any point in $\ZR^2$ (for example, $v\in R^{\frac{1+\de}{2}}\ZZ^2$).

\smallskip

What follows is a pruning of scale-$R$ wave packets of the function $f$. Let us introduce $O_\e(1)$ scales:
\begin{equation}
\label{scales-1}
    \rho_1=R^{1/2+\de}, \rho_2=R^{1/2+2\de}, \ldots,\rho_\ell=R
\end{equation}
with $\ell\sim 1/(2\de)=O_\e(1)$. The pruning of $f$ will be proceeded from the smallest scale $\rho_1$ to the biggest scale $\rho_\ell$. First let us prune wave packets at the smallest scale $\rho_1=R^{1/2+\de}$. By dyadic pigeonholing, we can find a dyadic number $\la_1$ so that
\begin{enumerate}
    \item There is a corresponding scale-$R$ directional set $\Theta_{\la_1}[R]$, and for each $\theta\in\Theta_{\la_1}[R]$, a scale-$R$ tube set $\ZT_{\la_1,\theta}[R]$. Denote also $\ZT_{\la_1}[R]=\bigcup_{\theta}\ZT_{\la_1,\theta}[R]$.
    \item Uniformly for each $\theta\in\Theta_{\la_1}[R]$,
    \begin{equation}
    \label{la-1}
        |\ZT_{\la_1,\theta}[R]|\sim \la_1.
    \end{equation}
    \item For each $T\in\ZT_{\la_1}[R]$, $\|f_T\|_2$ are about the same up to a constant multiple.
    \item The $L^p$ norm of the  sum of wave packets $\sum_{T\in\ZT_{\la_1}}Ef_T$ dominates the $L^p$ norm of $Ef$, in the sense that $\|\sum_{T\in\ZT_{\la_1}[R]}Ef_T\|_{L^p(B_R)}\gtrapprox\|Ef\|_{L^p(B_R)}$.
\end{enumerate}
Define $f_{\rho_1}=\sum_{T\in\ZT_{\la_1}[R]}f_T$ be the pruned function at the smallest scale $\rho_1$. This is the pruning at the first step.

Next, suppose there is a pruned function $f_{\rho_{j-1}}$ at scale $\rho_{j-1}$, we would like to prune it at scale $\rho_j$. By dyadic pigeonholing, there are dyadic numbers $\ka_1=\ka_1(j), \ka_2=\ka_2(j)$, and a collection of $\rho_j\times\rho_j\times R$ fat tubes $\wt\cT[\rho_j]$ so that
\begin{enumerate}
    \item Any two $\wt T_1,\wt T_2\in\wt\cT[\rho_j]$ either are parallel, or make an angle $\gtrsim \rho_j/R$.
    \item Each $\wt T\in\wt \cT[\rho_j]$ contains $\sim\ka_1$ 
    many $\rho_{j-1}\times\rho_{j-1}\times R$ fat tubes in $\wt\cT[\rho_{j-1}]$.
    \item For each directional cap $\theta'$ with  $d(\theta')=\rho_j/R^{1+\de}$, there are either $\sim \ka_2$ parallel $\rho_j\times\rho_j\times R$ fat tubes pointing to this direction, or no tubes at all.
    \item The above two items imply that for any $\wt T\in \wt\cT[\rho_j]$ with direction $\theta'$,
    \begin{equation}
    \label{l2-estimate-every-scale}
        \sum_{T\subset \wt T}\|f_T\|_2^2\lesssim\ka_2^{-1}\|f_{\theta'}\|_2^2.
    \end{equation}
    \item $\|\sum_{T\subset \bigcup_{\wt T\in\wt\cT[\rho_j]}}Ef_T\|_{L^p(B_R)}\gtrapprox\|Ef\|_{L^p(B_R)}$.
\end{enumerate}
Define $f_{\rho_j}=\sum_{T\subset \bigcup_{\wt T\in\wt\cT[\rho_j]}}f_T$ be the pruned function at scale $\rho_j$.  We remark that although the pruning at scale $\rho_j$ may destroy some uniform structure at scale $\rho_{j'}$ with $j'<j$ (for example, the fact that at scale $\rho_{j'}$ there are $\sim \ka_2(j')$  parallel $\rho_{j'}\times\rho_{j'}\times R$ fat tubes may no longer be true. Instead, there will be $\lesssim \ka_2(j')$ parallel fat tubes), the upper bound estimate \eqref{l2-estimate-every-scale} still remains true for all $j'<j$, namely, $\sum_{T\subset \wt T}\|f_T\|_2^2\lesssim\ka_2(j')^{-1}\|f_{\theta'}\|_2^2$ for every $\theta'$ with $d(\theta')=\rho_{j'}/R^{1+\de}$ and every fat tube $\wt T\in\wt\cT[\rho_{j'}]$ that points to $\theta'$.

The pruned function $f_{\rho_l}$ at the biggest scale $\rho_\ell$ is the one we will carefully study in the rest of the paper. For simplicity, we still denote $f=f_{\rho_l}$ by an abuse of notation. Let us emphasize some properties of the new $f$:
\begin{enumerate}
    \item For each scale $\rho_j$ in \eqref{scales-1} the following is true: For each $\rho_j/R^{1+\de}$ directional cap $\theta'$, there are $\lesssim \ka_2$   parallel $\rho_j\times\rho_j\times R$ fat tubes $\wt T$, each of which contains the same amount (up to a constant multiple) of thin tubes from $\ZT_{\la_1}[R]$, where $\ZT_{\la_1}[R]$ was defined near \eqref{la-1}.
    \item We have for each $\rho_j/R^{1+\de}$ directional cap $\theta'$ and $\ka_2=\ka_2(j)$,
    \begin{equation}
    \label{l2-estimate-pruning}
        \sum_{T\subset \wt T}\|f_T\|_2^2\lesssim\ka_2^{-1}\|f_{\theta'}\|_2^2.
    \end{equation}
\end{enumerate}

\subsection{Broad-narrow reduction}

The broad-narrow reduction was introduced in \cite{Bourgain-Guth-Oscillatory}. Specifically, partition the unit ball $B^2(0,1)$ into cubes $\{\tau\}$ such that $d(\tau)=K^{-1}$, where $K$ is a large number but is also small compared to $R$, for example, $K=(\log R)^{100\e^{-100}}$. Here is the formal definition of broadness (2-broad).

\begin{definition}[\cite{Guth-R3}]
\label{broad}
Given $A\geq1$ and any $x\in \ZR^3$, we define $\BR_A Ef(x)$ as the $[A]+1$ largest number in $\{|Ef_\tau(x)|\}_{\tau}$. That is, 
\begin{equation}
\label{alpha-broad}
    \BR_A Ef(x):=\min_{\tau_1,\ldots,\tau_{A}}\max_{\substack{\tau\not=\tau_k,\\1\leq k\leq [A]}} |Ef_\tau(x)|.
\end{equation}
\end{definition}
Eventually we will choose $A\leq K^{\e}$. Roughly speaking, the broad-narrow reduction allows us to focus on those points $x\in\ZR^3$ where considerably many $\{Ef_\tau(x)\}_{\tau}$ make major contribution to $Ef(x)$. A similar definition is given in \cite{Wang-restriction-R3} Section 2.2. After the broad-narrow reduction (see for instance \cite{Guth-R3}), we only need to consider the $\BL^p$-norm $\|Ef\|_{\BL^p(B_R)}^p:=\|\BR_A Ef\|_{L^p(B_R)}^p$.

\begin{remark}
\rm
The $\BL^p$-norm given here is slightly different from the one in \cite{Wang-restriction-R3}, but for our purpose they are the same. Some properties of the $\BL^p$-norm can be found in \cite{Wang-restriction-R3} Section 2.

\end{remark}

\subsection{Polynomial partitioning}

Let us recall the polynomial partitioning introduced in \cite{Guth-R3}. One can use Corollary 1.7 in \cite{Guth-R3} with a given degree $d$ to partition the measure $\mu_{Ef}(B_R)=\|Ef\|_{\BL^p(B_R)}^p$. The outcomes are
\begin{enumerate}
    \item A polynomial $P$ of degree $\sim d$.
    \item A collection of disjoint cells $\cu=\{U\}$ with $|\cu|\sim d^3$ such that 
    \begin{equation}
        B_R\setminus Z(Q)=\bigcup_{U\in\cu} U.
    \end{equation}
    \item $\mu_{Ef}(U)=\|Ef\|_{\BL^p(U)}^p$ are about the same up to a constant multiple for all $U\in\cu$.
    \item We can refined the polynomial partitioning a little bit to get an extra information on each $U\in \cu$: $U$ is contained in an $Rd^{-1}$-ball in $B_R$. This refinement was obtained in \cite{Wang-restriction-R3}.
\end{enumerate}
Now we introduce a wall $W$, which is a thin neighborhood of the variety $Z(Q)$:
\begin{equation}
    W:=N_{R^{1/2+\de}}(Z(Q)).
\end{equation}
Define for each $U\in\cu$ a smaller cell $O=U\setminus W$ and let $\co=\{O\}$. One advantage for looking at the smaller cell $O$ is that any tube of dimensions $R^{1/2+\de}\times R^{1/2+\de}\times R$ can only intersect at most $O(d)$ cells in $\co$. 

The decomposition above leads to a partition
\begin{equation}
    B_R=W\bigsqcup\Big(\bigsqcup_{O\in\co} O\Big)
\end{equation}
and hence an estimate
\begin{equation}
\label{partitioning-outcome-1}
    \|Ef\|_{\BL^p(B_R)}^p\lesssim\sum_{O\in\co}\|Ef\|_{\BL^p(O)}^p+\|Ef\|_{\BL^p(W)}^p.
\end{equation}
If the first term in \eqref{partitioning-outcome-1} dominates, then we say ``we are in the cellular case". Let $f_{O}$ be the sum of wave packets that intersect $O$. A crucial fact is 
\begin{equation}
\label{cell-pointwise}
    Ef_{O}(x)= Ef(x), \hspace{.5cm}x\in O.
\end{equation}
This yields, supposing that we are in the cellular case,
\begin{equation}
\label{cell-equation}
    \|Ef\|_{\BL^p(B_R)}^p\lesssim\sum_{O\in\co}\|Ef_{O}\|_{\BL^p(O)}^p.
\end{equation}

If the second term in \eqref{partitioning-outcome-1} dominates, then we say ``we are in the algebraic case". In this case we will divide wave packets into a transversal part and a tangential part. Introduce a collection of $R^{1-\de}$-balls $B$ in $B_R$. For each $B$, we define $S$ as the portion of the wall inside $B$: 
\begin{equation}
    S=W\cap B,
\end{equation}
and call it an ``algebraic fat surface". Then we define a tangential function $f_{S}$ and a transverse function $f_{S,trans}$ for each $B$. Roughly speaking, the tangential function $f_{S}$ contains all the wave packets $f_T$ that the $R$-tube $T$ is $R^{-1/2+2\de}$-tangent to $Z(Q)$ at each point of $Z(Q)\cap B$, and the transverse function $f_{S,trans}$ contains all the remaining wave packet. Here is the formal definition of tangential tubes:
\begin{definition}[\cite{Guth-R3}, Definition 3.3]
\label{tangential-def}
The collection of tangential tubes $\ZT_S$ is the set of tubes $T$ obeying the following two conditions:
\begin{enumerate}
    \item [$\bullet$] $T\cap S\not=\varnothing$.
    \item [$\bullet$] If $z$ is any non-singular point of $Z(P)$ lying in $2 B\cap 10T$, then 
    \begin{equation}
        |{\rm Angle}(v(T),T_zZ(P))|\leq R^{-1/2+2\de}.
    \end{equation}
\end{enumerate}
\end{definition}
\noindent Hence inside $S$ we have
\begin{equation}
    f=f_S+f_{S,trans},
\end{equation}
which gives that for $x\in S$,
\begin{equation}
\label{algebraic-eq}
    Ef(x)= Ef_{S,trans}(x)+ Ef_{S}(x).
\end{equation}

\smallskip

One can iteratively use the polynomial partitioning above for either cellular case and algebraic case to obtain a ``tree" structure.\footnote{We do not break the algebraic case into transverse case and tangent case when performing the iteration. This is different from the iteration (induction) given in \cite{Guth-R3}.} Each node of this tree is either a cellular cell or an algebraic cell. Note that in either cellular or algebraic case, the diameter of each resulting cell is strictly decreasing (decrease by either $1/d$ or $R^{-\de}$). The iteration will stop when the diameter of each cell is smaller than $R^\de$. A cell that appears the last step of the iteration is called a ``leaf". Here is a summary of this iteration:

\begin{lemma}[\cite{Wang-restriction-R3} Lemma 3.3]
\label{algebraic-lem-1}
For a function $f$ supported in $B^2(0,1)$, there is a tree structure $\co_{\text{tree}}$ of height $J\lesssim \log R/\log\log R$ satisfying the followings:
\begin{enumerate}
    \item The root of the tree $\co_{\text{tree}}$ is $O_0=B_R$.
    \item For each $0\leq j\leq J-1$, the children of a node $O_j$ of depth $j$ are some subsets $O_{j+1}$ of $O_j$, and each $O_{j+1}$ lies in a ball $B_{O_{j+1}}$ of radius $R_{j+1}\leq R_j/d$. The radius $R_{j+1}$ is called the scale of $O_{j+1}$, and the radii $R_{j+1}$ are the same for all $O_{j+1}$ of depth $j+1$. Moreover, there is a collection of $R_{j+1}$-tubes $\ZT_{O_{j+1}}$ associated to each $O_{j+1}$. 
    \item There is a number $n\leq\de^{-2}$ and indices $0=j_0<j_1<\cdots< j_n\leq J$ such that each node $O_{j_t}$ of depth $j_t,1\leq t\leq n$ is a portion of a fat $R_{j_t-1}$-surface, and $R_{j_t}=R_{j_t-1}^{1-\de}$. Here by a fat $r$-surface we mean a thin neighborhood $N_{r^{1/2+\de}}Z_S\cap B_S$, where $Z_S$ is a union of smooth algebraic surfaces with $\deg Z_S\leq d$, and $B_S$ is an $r^{1-\de}$-ball. To emphasis that $O_{j_t}$ is a portion of a fat surface, we denote $S_t=O_{j_t}$ and let $\cs_t$ be the collection of all $S_t$.
    \item The sets $\{N_{R_{j_t-1}^{1/2+\de}}(S_t)\}_{S_t\in\cs_t}$ are finitely overlapping.
    \item Let $T(j)=\max\{t:j_t\leq j\}$. For each $1\leq j\leq J$ we have
    \begin{equation}
    \label{partitioning-blp-norm2}
        \|Ef\|_{\BL^p(O_j)}^p\lesssim d^{-3(j-T(j))}\|Ef\|_{\BL^p(B_R)}^p
    \end{equation}
    as well as
    \begin{equation}
    \label{partitioning-blp-norm}
        \|Ef\|_{\BL^p(B_R)}^p\lesssim 2^j(\log R)^{T(j)}\sum_{O_j\in\co_{\text{tree}}}\|Ef\|_{\BL^p(O_j)}^p,
    \end{equation}
    where the sum $\sum_{O_j\in\co_{\text{tree}}}$ is over all nodes in the tree $\co_{\text{tree}}$ of depth $j$.
    \item If $j\not=j_t$, then for each node $O_{j-1}$ of depth $j-1$, a $R_{j-1}$-tube $T\in\ZT_{O_{j-1}}$ intersects $\lesssim d$  children $O_j$ of $O_{j-1}$.
    \item Denote by $\co_{leaf}$ the collection of leaves in the tree $\co_{tree}$. Then all sets in $\co_{leaf}$ are disjoint.
\end{enumerate}
\end{lemma}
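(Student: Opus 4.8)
\textbf{Proof proposal for Lemma \ref{algebraic-lem-1}.}

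The plan is to build the tree $\co_{\text{tree}}$ by iterating the polynomial partitioning recipe recalled above, keeping track of two competing mechanisms for how a cell subdivides. At each stage I start with a node $O_j$ lying in a ball of radius $R_j$, and apply Corollary 1.7 of \cite{Guth-R3} (with the refinement in item (4) of the polynomial partitioning discussion) at degree $d$ to the measure $\mu_{Ef}$ restricted to $O_j$. This produces a partition $O_j \setminus W_j = \bigsqcup_U (U \setminus W_j)$ together with the wall $W_j = N_{R_j^{1/2+\de}}(Z(Q_j))$. One then pigeonholes between the \emph{cellular case} (the cells $O = U\setminus W_j$ dominate, yielding \eqref{cell-equation} with $\sim d^3$ children, each in a ball of radius $R_j/d$, each carrying its wave packet collection $\ZT_O$, and with the $O(d)$-intersection property because the tube width $R_j^{1/2+\de}$ forces a transversal $R_j$-tube to meet few of the $d^3$ cells) and the \emph{algebraic case} (the wall $W_j$ dominates, so using \eqref{algebraic-eq} one covers $W_j\cap B_R$ by the fat surfaces $S = W_j \cap B$ over a cover by $R_j^{1-\de}$-balls $B$, and further pigeonholes to a single representative scale $R_{j+1} = R_j^{1-\de}$). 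Declaring the algebraic-case children to be the nodes $O_{j_t}=S_t$ records item (3); the finite-overlap statement in item (4) is exactly the finite overlap of the $R_j^{1-\de}$-ball cover. I would iterate until every leaf has diameter $< R^\de$; since each step shrinks the radius by a factor $d$ or raises it to the power $1-\de$, and each such step is genuine progress, the height bound $J\lesssim \log R/\log\log R$ and the bound $n\le\de^{-2}$ on the number of algebraic levels both follow from elementary bookkeeping (there can be at most $O(\de^{-1})$ algebraic steps before the scale drops below $R^\de$, and one can absorb the constant into $\de^{-2}$).

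The main content is then the two displayed inequalities \eqref{partitioning-blp-norm2} and \eqref{partitioning-blp-norm}. For \eqref{partitioning-blp-norm2} I would argue downward along the tree: a cellular step from depth $j-1$ to $j$ loses a factor $d^{-3}$ in the $\BL^p$ mass of a typical cell (because the $\sim d^3$ children split the mass of the parent roughly evenly, by item (3) of the partitioning outcome), whereas an algebraic step loses no power of $d$ at all (the fat surface $S_t$ can carry the full $\BL^p$ mass of its parent, since the wall is a single connected piece rather than $d^3$ disjoint cells). Counting the cellular steps among the first $j$ levels as exactly $j - T(j)$ gives the exponent $-3(j-T(j))$; the implicit constant absorbs the harmless $O(1)$ and $\log R$ factors from the pigeonholing at each of the $\lesssim \de^{-2}$ algebraic levels and the $O(1)$-per-level cellular losses. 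For \eqref{partitioning-blp-norm} I would run the complementary upper bound: at a cellular step \eqref{partitioning-outcome-1} plus the cellular-dominance assumption gives $\|Ef\|_{\BL^p(O_{j-1})}^p \lesssim \sum_{\text{children}} \|Ef\|_{\BL^p(O_j)}^p$ with only an absolute constant loss, contributing the factor $2$ per level and hence $2^j$ overall; at an algebraic step the extra pigeonholing over the $R_{j-1}^{1-\de}$-ball cover and over dyadic ranges of the relevant parameters costs a $\log R$ factor, contributing $(\log R)^{T(j)}$ since there are $T(j)$ algebraic levels up to depth $j$. Summing \eqref{partitioning-outcome-1} telescopically down the tree yields the claimed bound with the sum over all depth-$j$ nodes.

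Items (2), (6) and (7) are then bookkeeping riders on this construction: (2) is the defining property of a polynomial-partitioning step together with the radius refinement; (6) is the observation that when $j\ne j_t$ the parent $O_{j-1}$ was subdivided in the cellular regime, and a $R_{j-1}$-tube $T\in\ZT_{O_{j-1}}$ — which has cross-section $R_{j-1}^{1/2+\de}$, comparable to the wall width — can pierce only $O(d)$ of the $d^3$ cells cut out by a degree-$d$ polynomial (this is the standard Bézout-type count that the cells between consecutive sheets of $Z(Q_{j-1})$ along the core of $T$ number at most $O(d)$); and (7) follows because leaves at the same depth are disjoint by construction, and leaves at different depths are separated because the shrinking procedure keeps the enclosing balls nested and the wall-removal makes the pieces disjoint — more precisely, one takes $\co_{\text{leaf}}$ to be a single antichain obtained by stopping each branch the first time its diameter drops below $R^\de$, and disjointness is inherited from the disjointness of cells produced at each partitioning step.

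\textbf{Main obstacle.} The delicate point is the exact form of the two weight factors, namely showing that algebraic steps cost \emph{no} power of $d$ in \eqref{partitioning-blp-norm2} (so that the exponent is $-3(j - T(j))$ rather than $-3j$) while simultaneously costing only a $\log R$ — not a power of $R$ — in \eqref{partitioning-blp-norm}. This requires the refinement of Guth's polynomial partitioning that places each algebraic cell in a genuine $R^{1-\de}$-ball with finitely overlapping neighborhoods (item (4) of the setup and \cite{Wang-restriction-R3} Lemma 3.3), so that passing to the wall does not multiply the number of pieces; without that refinement one would pick up an extra $R^{O(\de)}$ per algebraic level and, over $\de^{-2}$ levels, an uncontrolled $R^{O(1)}$ loss. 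Everything else is routine iteration of the already-cited polynomial-partitioning corollary.
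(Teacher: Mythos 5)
The paper does not prove this lemma itself: it is quoted verbatim from \cite{Wang-restriction-R3} Lemma 3.3, so there is no in-paper argument to compare against. Your reconstruction nevertheless has the right shape — iterate the polynomial partitioning, distinguish cellular from algebraic steps, attribute the $d^{-3}$ loss solely to cellular subdivisions so that the exponent in \eqref{partitioning-blp-norm2} is $-3(j-T(j))$, attribute the extra $\log R$ pigeonholing solely to algebraic steps so that the weight in \eqref{partitioning-blp-norm} is $2^j(\log R)^{T(j)}$, get item (6) from the standard B\'ezout-type count, and propagate disjointness of leaves down the tree — and this is indeed how the cited proof goes.

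Two small imprecisions are worth noting, though neither is fatal. First, the number of algebraic steps before the scale falls below $R^{\delta}$ solves $(1-\delta)^n \geq \delta$ and is therefore $O(\delta^{-1}\log(1/\delta))$, not $O(\delta^{-1})$ as you assert; it is still $\le \delta^{-2}$ for small $\delta$, so item (3) survives. Second, the height bound $J\lesssim \log R/\log\log R$ does not follow merely from ``each step is genuine progress'': with an $R$-independent degree $d$ the cellular steps alone would require $\sim \log R/\log d \sim \log R$ levels. Obtaining $\log R/\log\log R$ requires the partitioning degree to grow at least like a power of $\log R$, which is how it is actually chosen in \cite{Wang-restriction-R3}; the present paper's shorthand ``$d$ is a large constant depending on $\e$'' glosses over this. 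You should flag that the height bound, and with it the claim that the accumulated factor $2^J$ is $R^{O(\delta)}$, hinges on this choice of $d$ and would fail for a genuinely fixed constant degree.
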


The algebraic cases in the iteration are very important to us. Note that for a leaf $O'\in\co_{leaf}$, there are $n$ unique algebraic fat surfaces $S_1,S_2,\ldots, S_n$, so that $O'$ is within a containing chain
\begin{equation}
\label{containing-chain}
    O'\subset S_n\subset S_{n-1}\subset\cdots\subset S_1\subset B_R.
\end{equation}

Besides the outcomes from Lemma \ref{algebraic-lem-1}, we are in particular interested in some pointwise estimates similar to \eqref{cell-pointwise} and \eqref{algebraic-eq}. Note that for any $1\leq j\leq n$ and any ancestor $O_j$ of $S_j$ that $S_j\subset O_j\subset S_{j-1}$, by \eqref{cell-pointwise} one has
\begin{equation}
\label{pointwise-cell-2}
    Ef_{O_j}(x)= Ef_{S_{j-1},trans}(x),\hspace{.5cm}x\in O_j.
\end{equation}
Here we use the convention $S_0=B_R$ and $f_{S_0}=f$. 

We know that $f_{O_j}$ is essentially a sum of scale-$r_j$ ($r_j$ is the scale of $O_j$) wave packets, which are determined by tubes from, say $\ZT_{O_j}$. The dimensions of the tubes in $\ZT_{O_j}$ are roughly $r_j^{1/2}\times r_j^{1/2}\times r_j$. For any subcollection $\ZT\subset\ZT_{O_j}$, define $f^\ZT$
as the function concentrated on wave packets from $\ZT$. Recall \eqref{algebraic-eq}. The pointwise equality \eqref{pointwise-cell-2} further yields that
\begin{equation}
\label{iterative-equation}
    Ef_{O_j}^\ZT(x)= Ef^\ZT(x)-\Big(\sum_{l=1}^{j-1}Ef_{S_l}^\ZT(x)\Big), \hspace{.5cm}x\in O_j.
\end{equation}
If all $Ef_{S_l}^\ZT(x)$ are small, then similar to \eqref{cell-pointwise} we also have $Ef_{O_j}^\ZT(x)\sim Ef^\ZT(x)$. To find out when $Ef_{S_l}^\ZT(x)$ are small, we want to compare $Ef_{S_l}^\ZT$ with $Ef_{S_l}$. It is hard to get something meaningful for general $\ZT$, while if $\ZT$ is either a collection of transverse tubes or a collection of tangent tubes with respect to  an algebraic fat surface $\wt S$, it was showed in \cite{Wang-restriction-R3} Lemma 10.2 that  
\begin{equation}
\label{fat-fat}
    |Ef_{S_l}^\ZT(x)|\lesssim |Ef_{S_l}(x)|
\end{equation}
for any $x\in O_j\cap \wt S$.

This crucial fact gives that for any fat surface $S_t$ and $B_K\subset S_t$ (see Definition \ref{broad} for $K$), if $\|Ef_{S_l}\|_{\BL^p(B_K)}\lessapprox R^{-\Om(\de)}\|Ef_{S_t}\|_{\BL^p(B_K)}$ for all $1\leq l<t$, then
\begin{equation}
\label{fat-fat-cor}
    \|Ef_{S_t}\|_{\BL^p(B_K)}\sim\|Ef^{\ZT_{S_t}}\|_{\BL^p(B_K)},
\end{equation}
where $\ZT_{S_t}$ is the collection of tangent tube corresponding to the algebraic fat surface $S_t$ (note that $Ef_{S_t}=Ef_{S_t}^{\ZT_{S_t}}$ and see also Lemma 3.10 in \cite{Wang-restriction-R3}). We remark that \eqref{fat-fat} and \eqref{fat-fat-cor} also hold for $f$ replaced by $g$, where $g$ is a function similar to $f$ that will be given later in \eqref{g}.

\begin{remark}
\label{notation-remark}
\rm
The notations here are slightly different from the notations in \cite{Wang-restriction-R3} (in particular, Lemma 3.7 in \cite{Wang-restriction-R3}). We continue to use $f_S$ to denote the tangential function one obtained for the fat surface $S$ in polynomial partitioning iteration. While $f^{\ZT_S}$ means $f_{\Pi_{S}}$ in \cite{Wang-restriction-R3}, which essentially refers to restricting the original function $f$ to the wave packets from $\ZT_S$.

\end{remark}

We know by definition that tubes from $\ZT_S$ are contained in the fat surface $S$ with some scale $r$. While more is true because we are looking at the broad norm $\BL^p(S)$ on $S$. Roughly speaking, the following lemma allows us to assume that tubes in $\ZT_S$ are contained in the $r^{1/2+O(\de)}$-neighborhood of $r^{O(\de)}$ planes.
\begin{lemma}
\label{polynomial-plane-lemma}
There exist a collection of $\lesssim r^{O(\de)}$ planes $\cp$ so that if we let $\ZT_{S,\cp}\subset \ZT_S$ be the subcollection that each $r$-tube $T\in\ZT_{S,\cp}$ is contained in the $r^{1/2+O(\de)}$-neighborhood of some planes in $\cp$, then 
\begin{equation}
    \|Ef^{\ZT_S}\|_{\BL^p(S)}\lesssim \|Ef^{\ZT_{S,\cp}}\|_{\BL^p(S)}.
\end{equation}
\end{lemma}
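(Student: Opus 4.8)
The plan is to reduce the statement to a \emph{structural} statement about tangential tubes at a single scale and then invoke the broad-norm's stability under passing to a sub-collection together with a polynomial-covering (Wongkew-type) bound for the wall $Z(Q)$. First I would recall that $\ZT_S$ consists of $r$-tubes that are $r^{-1/2+2\de}$-tangent to $Z(Q)$ along the ball $B_S$; in particular each such tube lies in $N_{r^{1/2+O(\de)}}(Z(Q)\cap B_S)$. The key geometric input is that, after a further $r^{O(\de)}$-loss, the relevant piece of $Z(Q)$ can be covered by $\lesssim r^{O(\de)}$ genuine \emph{hyperplanes} in the following weak sense: there is a collection $\cp$ of $\lesssim r^{O(\de)}$ planes so that $Z(Q)\cap B_S$, \emph{as seen by tangent tubes}, is essentially contained in $\bigcup_{\Pi\in\cp}N_{r^{1/2+O(\de)}}(\Pi)$. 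The tubes in $\ZT_S$ not captured this way are the ones that are tangent to a ``curved'' part of $Z(Q)$, and on the curved part the $2$-broad norm is small --- this is where I would use that curvature forces the wave packets tangent there to spread into many directions, so $\BR_A$ kills their contribution (a point made, in various forms, in \cite{Guth-R3} and \cite{Wang-restriction-R3}).

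The main steps, in order, are: (1) Apply a degree-reduction/partition to $Z(Q)$: write $Z(Q)$ locally as a union of smooth pieces and, on each piece, split into the part where the second fundamental form is $\lesssim r^{-1/2+O(\de)}$ (``flat'') and the part where it is larger (``curved''). On each flat piece, Taylor expansion shows the piece lies in $N_{r^{1/2+O(\de)}}(\Pi)$ for a single plane $\Pi$ (the tangent plane at a reference point), and there are only $O_d(1)\cdot r^{O(\de)}$ such pieces because $\deg Q\lesssim d$; collect these planes into $\cp$, so $|\cp|\lesssim r^{O(\de)}$. (2) Let $\ZT_{S,\cp}\subset\ZT_S$ be the tangent tubes contained in some $N_{r^{1/2+O(\de)}}(\Pi)$, $\Pi\in\cp$, and let $\ZT_{S}^{\mathrm{curv}}=\ZT_S\setminus\ZT_{S,\cp}$ be the rest; these are tangent only along curved pieces. (3) Show $\|Ef^{\ZT_S^{\mathrm{curv}}}\|_{\BL^p(S)}\lessapprox R^{-\Om(\de)}\|Ef^{\ZT_S}\|_{\BL^p(S)}$: on a curved piece the normal directions of the tangent planes vary, so any $K^{-1}$-cap $\tau$'s worth of tangent tubes, when restricted near a non-singular point of that piece, occupy a thin slab; summing over the $\gtrsim K^{\mathrm{something}}$ distinct $\tau$ involved and using that $\BR_A$ discards all but $[A]+1$ caps (with $A\le K^\e$) gives the gain. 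More cleanly, one can cite that the $2$-broad norm over a set on which $Z(Q)$ has curvature $\gtrsim r^{-1/2+\de}$ obeys a local bilinear(/transversality) bound that beats the trivial bound by $R^{\Om(\de)}$; this is essentially the ``tangent implies nearly planar'' heuristic already used in \cite{Guth-R3}. (4) Conclude via the triangle inequality for $\BL^p$ (Minkowski, or the elementary inequality $\BR_A(F+G)\le \BR_{A}F + \BR_A G$ up to constants) that $\|Ef^{\ZT_S}\|_{\BL^p(S)}\lesssim \|Ef^{\ZT_{S,\cp}}\|_{\BL^p(S)}+\|Ef^{\ZT_S^{\mathrm{curv}}}\|_{\BL^p(S)}\lesssim \|Ef^{\ZT_{S,\cp}}\|_{\BL^p(S)}$, absorbing the curved term for $R$ large.

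I expect the main obstacle to be step (3): making precise that being $r^{-1/2+2\de}$-tangent to a \emph{curved} component of $Z(Q)$ genuinely forces a directional spread large enough that the $[A]+1$-fold minimum in $\BR_A$ (with $A\le K^\e$) produces an honest $R^{\Om(\de)}$ gain, rather than merely a $K^{O(1)}$ loss that would swamp it. The delicate point is the interaction of the two scales --- the $K^{-1}$ caps $\tau$ of the broad decomposition versus the $r^{-1/2+O(\de)}$ curvature threshold --- and one has to arrange that the curvature piece sees $\gg A$ distinct $\tau$'s at (almost) every point, which requires a careful covering of the curved locus by sub-balls on which a single tangent plane controls things and then a pigeonholing/dyadic argument over those sub-balls. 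A secondary technical nuisance is handling singular points of $Z(Q)$ and multiple components, which is done exactly as in \cite{Guth-R3}: the singular locus has lower dimension and contributes negligibly, and the number of components is $O_d(1)$. Modulo these points, the decomposition is routine, and the planes $\cp$ come for free from the bounded degree of $Q$.
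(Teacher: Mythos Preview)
Your curvature dichotomy has a genuine gap. In step (1), a curvature threshold of $r^{-1/2+O(\de)}$ on a piece of diameter $\sim r^{1-\de}$ allows deviation from the tangent plane of order $r^{-1/2}\cdot r^{2(1-\de)}\sim r^{3/2}\gg r^{1/2}$, so a ``flat'' piece need not lie in $N_{r^{1/2+O(\de)}}(\Pi)$ for a single $\Pi$; lowering the threshold enough to fix this makes the complementary ``curved'' part far too mildly curved for step (3) to bite. More seriously, the mechanism in step (3) is wrong: at any smooth point $z$, tangent tubes through a nearby $r^{1/2}$-ball point in \emph{all} directions within the tangent plane $T_zZ(Q)$ regardless of the curvature of $Z(Q)$, so many $K^{-1}$-caps $\tau$ contribute and $\BR_A$ (the $[A]+1$-st largest $|Ef_\tau|$) stays large. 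The ``tangent implies nearly planar'' heuristic from \cite{Guth-R3} says only that tangent tubes through a fixed ball lie in a single planar slab --- which holds for curved and flat pieces alike and is the \emph{starting point} of the paper's proof, not a device for discarding curved pieces.

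The paper runs the logic in the opposite direction and does not attempt to cover $Z(Q)$ by planes a priori. For each $r^{1/2}$-ball $q\subset S$ with nonzero broad norm it uses the semialgebraic structure of $S_Z$ together with Tarski's projection theorem to show that the set of tangent directions through $q$ that stay inside $S_Z$ is a union of $O_d(1)$ arcs; since $A$ exceeds this $O_d(1)$ bound, one arc has length $\ge K^{-1}$, giving a full fan of tubes and hence $|P_q\cap S_Z|\gtrsim_K r^{5/2-O(\de)}$, where $P_q$ is the tangent-plane slab at $q$. Wongkew's bound $|S_Z|\lesssim d\,r^{5/2-\de}$ then permits at most $r^{O(\de)}$ such slabs with pairwise angular separation $>r^{-1/2+O(\de)}$, and these slabs furnish $\cp$. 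The ingredient missing from your plan is precisely this semialgebraic/Tarski step converting ``broad at $q$'' into a volume lower bound for $P_q\cap S_Z$; that is what creates the packing-versus-Wongkew tension, and it has no analogue in a curvature-threshold decomposition.
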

\begin{proof}
Recall that $S\subset S_Z= N_{r^{1/2+\de}}(Z_S)\cap B_S$, where $Z_S$ is a finite union of transverse complete intersections, each of which has degree at most $d$, and $B_S$ is an $r^{1-\de}$-ball containing $S$. Partition $S$ into finitely overlapping $r^{1/2+\de}$-balls $\bq=\{q\}$. Denote by $\ZT_{S,q}$ the subcollection of tubes in $\ZT_S$ that intersects $q$:
\begin{equation}
    \ZT_{S,q}=\{T'\in\ZT_S,5T'\cap q\not=\varnothing\}.
\end{equation}
By Definition \ref{tangential-def}, there is a plane $Z_q$ so that tubes in $\ZT_{S,q}$ are all contained in a fat plane $P_q:=N_{r^{1/2+2\de}}(Z_q)$. We are interested in the intersection $S_Z\cap P_q$. Our (heuristic) goal is to prove the following dichotomy: either $\|Ef^{\ZT_S}\|_{\BL^p(q)}=0$, or $|S_Z\cap P_q|\gtrsim_K |N_{r^{1/2+\de}}(Z_q)\cap B_S|\sim r^{5/2-\de}$ (see Definition \ref{alpha-broad} for $K$). Note that the latter case cannot happen for too many almost distinct $P_q$. Otherwise it would violate $|S_Z|\lesssim d r^{5/2-\de}$, which is given by Wongkew's theorem.

\smallskip

Let $z\in Z_q\cap q$ be a point. After rotation and translation, let us assume that $z$ is the origin and $Z_q$ is the vertical plane $\Si=\{x_1=0\}$. The following set parameterizes a $r^{1/2+\de}$-tube whose center is $\ba$ and direction is $\bd$:
\begin{equation}
\nonumber
    T_{\ba,\bd}=\{(\bx,t)\in\ZR^2\times[0,cr^{1-\de}]:|\bx-\ba-t\bd|\leq r^{1/2+\de}\},\,\,(\ba,\bd)\in[0,r^{1/2+\de}]^2\times[0,4]^2.
\end{equation}
Here $c$ is a small absolute number (for example, $c$=1/100). By some appropriate rigid transform, we can assume that all tubes from $\ZT_S$ have a parameterization $T_{\ba,\bd}$.  Consider the set ($N_{r^{-1/2+3\de}}(\Si)\cap[0,4]^2$ is roughly a $1\times r^{-1/2+3\de}$-tube in $\ZR^2$)
\begin{equation}
\label{LS}
    L_{S}:=\{(\ba,\bd)\in[0,r^{1/2+\de}]^2\times (N_{r^{-1/2+3\de}}(\Si)\cap[0,4]^2):(0,0)\in T_{\ba,\bd}\subset S_Z\}.
\end{equation}
This set basically contains the union of tubes in $S_Z$ that intersect $(0,0)$ and is $R^{-1/2+2\de}$-tangent to the vertical plane $\Si$ ($L_S$ is morally $S_Z\cap P_q$). 

Denote $\Xi_1,\Xi_2$ the parameter spaces
\begin{equation}
    \Xi_1=[0,r^{1/2+\de}]^2\times (N_{r^{-1/2+3\de}}(\Si)\cap[0,4]^2),\hspace{.3cm}\Xi_2=\ZR^2\times[0,cr^{1-\de}].
\end{equation}
We claim that $L_S$ is semialgebraic with complexity at most $O_d(1)$. In fact,  if consider the following two sets
\begin{equation}
    Y_1=\{(\ba,\bd,\bx,t)\in\Xi_1\times\Xi_2:(\bx,t)\not\in S_Z,(\bx,t)\in T_{\ba,\bd}\}
\end{equation}
and
\begin{equation}
    Y_2=\{(\ba,\bd,\bx,t)\in\Xi_1\times\Xi_2:(0,0)\not\in T_{\ba,\bd}\},
\end{equation}
then $L_S$ is the complement of $\Pi_{(\ba,\bd)}(Y_1\cup Y_2)$ in $\Xi_1$, where $\Pi_{(\ba,\bd)}$ is the projection $(\ba,\bd,\bx,t)\mapsto(\ba,\bd)$. By Tarski's projection theorem (see \cite{Katz-Rogers}), $\Pi_{(\ba,\bd)}(Y_1\cup Y_2)$ has complexity $O_d(1)$, hence so is $L_S$.

Let $I=[0,4]$ be the interval that parameterizes directions in the vertical plane $\Si$. Consider the projection $P_2:(\ba,\bd)\mapsto\bd$ and $P_1:[0,4]^2\to I$, $P_1((x_1,x_2))=x_2$. Let $P=P_1\circ P_2$. By Tarski's projection theorem again, the projection $P(L_S)$ is a semialgebraic set of complexity $O_d(1)$. Hence $P(L_S)$ is a union of $O_d(1)$ disjoint intervals $\{I_j\}$ (a point is an interval of length zero). Notice that if $R$ is large enough, then the ``broad number" $A$ (see Definition \ref{broad}) is much larger than the number of intervals $\{I_j\}$.

Suppose $\|Ef^{\ZT_S}\|_{\BL^p(q)}\not=0$. Then by the definition of $\BL^p$-norm (Definition \ref{broad}) we know that there are at least $A$ tubes coming from $K^{-1}$-separate directions that intersect $q$, hence the origin. Since $A$ is larger the the number of intervals $\{I_j\}$, there is at least one interval $I_j$ with $|I_j|\geq K^{-1}$. The pull back $P^{-1}(I_j)\subset L_S\cap P_q$ is a union of $r^{1/2+\de}\times r^{1/2+\de}\times r^{1-\de}$-tubes rooted at $q$, each of which is $r^{-1/2+2\de}$-tangent to the vertical plane. Hence 
\begin{equation}
    r^{5/2-3\de}\lesssim|P^{-1}(I_j)|\leq|L_S\cap P_q|.
\end{equation}

Now we claim that there exists $r^{5\de}$ planes $\cp$, each of which is some plane $P_q$, so that $\bigcup_{q\in\bq}L_S\cap P_q$ is contained in the union $\bigcup_{P\in\cp}N_{r^{1/2+100\de}}(P)$. Note that this is enough to prove our lemma.

Suppose on the contrary that $\bigcup_{q\in\bq}L_S\cap P_q$ cannot be covered by less than $r^{5\de}$ fat planes $N_{r^{1/2+100\de}}(P)$ where $P\in\cp$. Let $|\cp|\sim r^{5\de}$. In particular, it means that for $P,P\in\cp$, $\ang(P,P')>r^{-1/2+90\de}$. Hence \begin{align}
    |L_S|\geq\Big|\bigcup_{P\in\cp} L_S\cap P\Big|&\geq\sum_{P\in\cp}|L_S\cap P|-\sum_{P,P'}|P\cap P'|\\ \nonumber
    &\gtrsim r^{5/2-3\de}|\cp|-r^{1/2-80\de}|\cp|^2\geq r^{5/2+\de}.
\end{align}
However, Wongkew's theorem gives $|L_S|\lesssim dr^{5/2-\de}$, yielding a contradiction.
\end{proof}

\subsection{Broom}
\label{subsection-broom}

As mentioned in the previous subsection, we obtain a tree after running the polynomial partitioning iteration for $\|Ef\|_{\BL^p(B_R)}^p$. Also we have 
\begin{equation}
    \|Ef\|_{\BL^p(B_R)}^p\lesssim R^{O(\de)}\sum_{O'\in\co_{leaf}}\|Ef\|_{\BL^p(O')}^p.
\end{equation}

In \cite{Wang-restriction-R3}, a relation ``$\sim$" between $R^{1-\e_0}$-balls ($\e_0=\e^{10}$) and $R$-tubes was introduced to make use of the broom structure. Roughly speaking, a scale-$R$ tube $T$ is related to an $R^{1-\e_0}$-ball $B_k$ if the wave packet $Ef_T$ is associated to a significant amount of fat algebraic surfaces inside $B_k$.  The formal definition of the relation is given in \cite{Wang-restriction-R3} Section 6. Using this relation, for fixed $B_k$, we can define a related function $f_k^\sim$, which is the sum all the wave packets $f_T$ that $T$ is related to $B_k$, and similarly for a unrelated function $f_k^{\not\sim}$. In other words, for a fixed ball $B_k$ if we define (recall \eqref{la-1})
\begin{equation}
\label{related-wp}
    \ZT_{k}^{\sim}[R]=\{T\in\ZT_{\la_1}[R]:T\text{ is related to }B_k\}
\end{equation}
as well as
\begin{equation}
\label{unrelated-wp}
    \ZT_{k}^{\not\sim}[R]=\{T\in\ZT_{\la_1}[R]:T\text{ is not related to }B_k\},
\end{equation}
then
\begin{equation}
\label{related-unrelated-fcn}
    f_k^\sim=\sum_{T\in\ZT_{k}^{\sim}[R]}f_T,\hspace{.5cm}f_k^{\not\sim}=\sum_{T\in\ZT_{k}^{\not\sim}[R]}f_T.
\end{equation}

There is a broom estimate \eqref{broom-esti-1} for the unrelated function $f_k^{\not\sim}$, which roughly says that the $R$-tubes rooted at a fat surface are only a small portion of all the $R$-tubes with similar directions. Indeed, suppose $S_t$ is fat surface at scale $R_{j_t}=r\leq R^{2/3}$ (see Lemma \ref{algebraic-lem-1} for $S_t$ and $R_{j_t}$) and $\ZT_{S_t,\theta'}[r]\subset\ZT_{S_t}[r]$ is the collection of $r$-tubes inside $\ZT_{S_t}[r]$ with direction $\theta'\in\Theta[r]$. Then 
\begin{equation}
\label{broom-esti-1}
    \|E(f_k^{\not\sim})^{\ZT_{S_t,\theta'}}\|_{L^2(B_{S_t})}^2\lesssim R^{O(\e_0)}\Big(\frac{r}{R}\Big)\cdot\Big(\frac{r}{R}\Big)^{1/2}\|Ef_{\theta'}\|_{L^2(\wt T_{S_t})}^2,
\end{equation}
where $\wt T_{S_t}$ is an $R/r^{1/2}\times R/r^{1/2}\times R$ fat tube containing $S_t$, pointing to the direction $\theta'$, and $B_{S_t}$ is the $r$-ball containing $S_t$ that we obtained from the polynomial partitioning iteration. One can compare \eqref{broom-esti-1} with the broom estimate (7.12) in \cite{Wang-restriction-R3}. Note that compared to the right hand side of (7.12) in \cite{Wang-restriction-R3}, the right hand side of \eqref{broom-esti-1} is integrated in a smaller region $\wt T_{S_t}$. This extra information is in fact given readily from the proof of Lemma 7.2 in \cite{Wang-restriction-R3}, since those $R$-tubes $T$ associated to $\ZT_{S_t,\theta'}$ (that is, $T\supset T'$ for some $T'\in\ZT_{S_t, \theta'}$,  and the directional cap of $T$ is contained in $\theta'$) are all contained in $\wt T_{S_t}$.

Recall the wave packet pruning in Section \ref{wp-pruning-section}. Suppose $r\leq R^{2/3}$ and suppose $R/r^{1/2}\leq\rho_j$ but $R/r^{1/2}\geq\rho_{j-1}$ for some scale $\rho_j$ in \eqref{scales-1}. Then as a consequence of \eqref{l2-estimate-pruning} and \eqref{broom-esti-1},
\begin{equation}
\label{broom-esti-2}
    \|E(f_k^{\not\sim})^{\ZT_{S_t,\theta'}}\|_{L^2(B_{S_t})}^2\lesssim R^{O(\e_0+\de)}\Big(\frac{r}{R}\Big)^{3/2}\ka_2(j)^{-1}\|Ef_{\theta'}\|_2^2.
\end{equation}
Comparing \eqref{broom-esti-2} to (7.12) in \cite{Wang-restriction-R3}, there is an extra gain $\ka_2(j)^{-1}$.

\begin{remark}
\label{broom-remark-0}

\rm

Let $\ZT_{k}'[R]\subset\ZT_k^{\not\sim}[R]$ and $f_k'=\sum_{T\in\ZT_{k}'[R]} f_T$. We remark that $f_k'$, the sum of a subcollection of unrelated wave packets, still satisfies the broom estimates \eqref{broom-esti-1} and \eqref{broom-esti-2} (with $f_k^{\not\sim}$ replaced by $f_k'$). Indeed, the broom estimate \eqref{broom-esti-1} follows from the ingredient: Suppose $(f_k^{\not\sim})^{\ZT_{S_t,\theta'}}$ is concentrated on $\cb$, a broom rooted at $S_t$ containing $b$ scale-$R$ wave packets (see Definition 5.4 in \cite{Wang-restriction-R3} for a broom). Then inside $\wt T_{S_T}$ there are at least $b^2$ scale-$R$ wave packets in $\ZT_{\la_1}[R]$ (recall \eqref{la-1}). Hence by Lemma 5.10 in \cite{Wang-restriction-R3} we have \eqref{broom-esti-1}.

Now since $\ZT_{k}'[R]\subset\ZT_k^{\not\sim}[R]$, $(f_k')^{\ZT_{S_t,\theta'}}$ is concentrated on a smaller broom $\cb'\subset\cb$. Thus by Lemma 5.10 in \cite{Wang-restriction-R3} again we have the broom estimate \eqref{broom-esti-1} with $f_k^{\not\sim}$ replaced by $f_k'$. Similar reasoning also applies for \eqref{broom-esti-2}.

\end{remark}

In Section \ref{two-ends-section}, we will introduce a new relation ``$\sim_{\bn}$" among the $R$-tubes $T$ and the $R^{1-\e_0}$-balls $B_k$ based on some Kakeya structures. Eventually for each $R^{1-\e_0}$-ball $B_k$, we will define an ``ultimate" unrelated function---the sum of wave packets $f_T$ that $T\not\sim B_k$ as well as $T\not\sim_{\bn} B_k$. This unrelated function enjoys both the broom estimate \eqref{broom-esti-2} (see Remark \ref{broom-remark-0}) and some additional Kakeya estimates.

\subsection{A ``planar" bilinear estimate}

The main result in this subsection (Lemma \ref{planar-refinement-lem}) is essentially two-dimensional. While for our later purpose, we instead state it under the three-dimensional setting.

Suppose that $\Ga\subset B^3(0,1)$ is a truncated planar $C^2$ curve with positive two-dimensional second fundamental form. Suppose also that $\Ga_1,\Ga_2\subset\Ga$ are two sub-curves with $\dist(\Ga_1,\Ga_2)\sim1$. Let $R\gg\rho\gg R^{1/2}\gg1$ be two large numbers. The classical bilinear theory says that if $g_1$, $g_2$ are two functions whose Fourier transforms are supported in $N_{R^{-1}}(\Ga_1)$, $N_{R^{-1}}(\Ga_2)$ respectively, then (see also \eqref{cordoba-ferfferman})
\begin{equation}
\label{planar-bilinear}
    \int_{S}|g_1g_2|^2\lesssim R^{-2}\rho^{-1}\|g_1\|_2^2\|g_2\|_2^2,
\end{equation}
where $S=N_{\rho}(P)\cap B_R$ with the plane $P$ being parallel to the planar curve $\Ga$.

\begin{lemma}
\label{planar-refinement-lem}
Let $g_1,g_2$ be defined at the beginning of this subsection, and let $\cq$ be a collection of disjoint $\rho$-balls contained in $S$ ($S$ was introduced below \eqref{planar-bilinear}) with $R^{1/2}\leq\rho\leq R$. 
Let $F: \mathcal{Q}\rightarrow \mathbb{R}^+$ be that $F(Q)$ are about the same up to a factor of $R^{\delta}$ for all $Q\in \cq$, and that $F(Q)\leq \int_Q|g_1g_2|^{p/2}$. In addition, 
the decomposition of $g_1,g_2$ in \eqref{decomposition-g-j} satisfy the uniform-incidence assumption \eqref{wpt-uniform-incidence}. Then
\begin{equation}
\label{planar-refinement}
    \sum_{Q\in \mathcal{Q}} F(Q) \lesssim R^{O(\delta)}\rho^{3-p}R^{-p/2}|\cq|^{1-p/2} \min\{|\cq|^{p/4},\eta^{p/2}\} \|g_1\|_2^{p/2}\|g_2\|_2^{p/2}.
\end{equation}
\end{lemma}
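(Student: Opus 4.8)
The idea is to interpolate between the classical bilinear estimate \eqref{planar-bilinear} (which governs the $L^2$ behaviour) and a trivial pointwise/$L^1$-type count that produces the ``refined'' gain $\min\{|\cq|^{p/4},\eta^{p/2}\}$, with the uniform-incidence hypothesis \eqref{wpt-uniform-incidence} serving as the bridge. First I would decompose each $g_i$ into wave packets adapted to scale $\rho$: write $g_i=\sum_{T} g_{i,T}$ where $T$ runs over a collection of $\rho^{1/2}\times\rho$ (in the relevant planar directions) tubes, as in \eqref{decomposition-g-j}. The point of the uniform-incidence assumption \eqref{wpt-uniform-incidence} is that on each $Q\in\cq$ the product $g_1 g_2$ is controlled by a roughly fixed number of tube-pairs, so that $\int_Q|g_1g_2|^{p/2}$ can be estimated by (number of incident tube pairs through $Q$) times a fixed per-tube contribution; summing over $Q$ and using that the tubes are essentially disjoint at scale $\rho$ converts the tube count into an $L^2$ quantity plus a combinatorial factor bounded by $\min\{|\cq|^{p/4},\eta^{p/2}\}$.

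Concretely, the plan is: \textbf{(i)} On a single ball $Q$, run the standard bilinear argument of C\'ordoba--Fefferman type at scale $\rho$ inside $Q$, but keep track of the number $\mu(Q)$ of wave-packet pairs $(T_1,T_2)$ (with $T_i$ coming from $g_i$, transverse) that are ``active'' on $Q$; this yields $\int_Q |g_1g_2|^{p/2}\lesssim R^{O(\de)}\,\mu(Q)^{1-p/4}\,(\text{bilinear }L^2\text{ mass on }Q)^{p/4}$ after an $L^2$-to-$L^{p/2}$ conversion using that on $Q$ each $g_{i,T}$ is roughly constant (the key feature of wave packets recalled in Section~\ref{wp-pruning-section}). \textbf{(ii)} Sum over $Q\in\cq$: by H\"older in $Q$ with exponents $\big(\tfrac{1}{1-p/4}, \tfrac{4}{p}\big)$ (valid since $p/2<2$, i.e. $p<4$, so $p/4<1$), $\sum_Q \mu(Q)^{1-p/4}(\cdots)^{p/4}\lesssim \big(\sum_Q\mu(Q)\big)^{1-p/4}\big(\sum_Q(\text{bilinear }L^2\text{ mass on }Q)\big)^{p/4}$. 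The second factor is $\lesssim \big(R^{-2}\rho^{-1}\|g_1\|_2^2\|g_2\|_2^2\big)^{p/4}$ by \eqref{planar-bilinear} (the balls $Q$ being disjoint). \textbf{(iii)} Bound $\sum_Q\mu(Q)$, the total incidence count. Here the uniform-incidence assumption gives $\mu(Q)\sim$ const, so $\sum_Q\mu(Q)\sim |\cq|\cdot\mu$, and the two-dimensional geometry (two transverse families of $\rho^{1/2}\times\rho$ tubes in a plane) forces $\mu\lesssim \min\{|\cq|^{1/2},\eta^2\}$: the $|\cq|^{1/2}$ bound is the trivial Cauchy--Schwarz/Cordoba bound for incidences of two transverse tube families intersecting $|\cq|$ fixed cells, while $\eta^2$ comes from the normalization in \eqref{wpt-uniform-incidence} that at most $\eta$ tubes from each family are present. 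Feeding $\sum_Q\mu(Q)\lesssim |\cq|\min\{|\cq|^{1/2},\eta^2\}$ into step (ii) and collecting powers (using $F(Q)\le \int_Q|g_1g_2|^{p/2}$ and that the $F(Q)$ are comparable, so the sum over $\cq$ is comparable to $|\cq|$ times a single term) yields exactly $R^{O(\de)}\rho^{3-p}R^{-p/2}|\cq|^{1-p/2}\min\{|\cq|^{p/4},\eta^{p/2}\}\|g_1\|_2^{p/2}\|g_2\|_2^{p/2}$ after bookkeeping the exponents $1-p/4$ and $p/4$.

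\textbf{Main obstacle.} The delicate point is step (iii): extracting the incidence bound $\mu(Q)\lesssim\min\{|\cq|^{1/2},\eta^2\}$ cleanly from the planar bilinear structure, and making sure the ``uniform-incidence assumption'' \eqref{wpt-uniform-incidence} is strong enough that the per-$Q$ contributions really do factor as (incidence count) $\times$ (fixed mass) without losing powers of $\rho$ or $R^{\de}$. In particular one must check that the wave packets at scale $\rho$ for the planar problem can be taken with the transversality $\dist(\Ga_1,\Ga_2)\sim1$ preserved, so that each pair $(T_1,T_2)$ meets in a set of diameter $\sim\rho^{1/2}$ inside $Q$ (this is what makes the bilinear $L^2$ computation on $Q$ efficient), and that summing the resulting geometric errors over the $O(\de^{-1})$ pigeonholing steps only costs $R^{O(\de)}$. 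The rest is H\"older and substitution. I would also double-check the endpoint exponent: the hypothesis $R^{1/2}\le\rho\le R$ together with $p<4$ is what keeps $p/4<1$ (so the H\"older split in (ii) is legitimate) and keeps $\rho^{3-p}$ on the correct side; for $p$ near $3$ the exponent $3-p$ is small and positive, consistent with $S$ being a $\rho$-neighbourhood of a plane.
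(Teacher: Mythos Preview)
Your overall architecture (H\"older on each $Q$, C\'ordoba--Fefferman $L^4$, then a combinatorial sum) matches the paper, but there are two genuine errors that make the argument fail as written.

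\textbf{First, you have misread the uniform-incidence assumption.} Equation \eqref{wpt-uniform-incidence} says $|\{Q\in\cq:Q\cap T\}|\sim\eta$ for each tube $T$, i.e.\ \emph{each tube hits about $\eta$ balls}. It does \emph{not} say that each ball $Q$ meets about $\eta$ tubes. So the claim in your step (iii) that ``$\eta^2$ comes from the normalization \dots\ that at most $\eta$ tubes from each family are present'' is a misinterpretation, and with the correct reading your bound $\mu(Q)\lesssim\eta^2$ has no justification. The paper uses $\eta$ the other way around: after C\'ordoba--Fefferman one has on each $Q$
\[
\int_Q|g_1g_2|^{p/2}\lesssim \rho^{3-p}R^{-p/2}\Big(\sum_{T_1\cap 2Q}\sum_{T_2\cap 2Q}\|g_{1,T_1}\|_2^2\|g_{2,T_2}\|_2^2\Big)^{p/4},
\]
then applies Cauchy--Schwarz in $Q$ to the \emph{square roots} of these bilinear sums, obtaining
\[
\sum_Q\Big(\sum_{T_1\cap 2Q}\sum_{T_2\cap 2Q}\|g_{1,T_1}\|_2^2\|g_{2,T_2}\|_2^2\Big)^{1/2}\le\prod_{i=1,2}\Big(\sum_Q\sum_{T_i\cap 2Q}\|g_{i,T_i}\|_2^2\Big)^{1/2}\lesssim\eta\,\|g_1\|_2\|g_2\|_2,
\]
since $\sum_Q\sum_{T\cap 2Q}\|g_T\|_2^2\sim\eta\sum_T\|g_T\|_2^2$ by the tube-hits-$\eta$-balls assumption. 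This is where $\eta$ actually enters.

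\textbf{Second, your step (i) has the wrong exponent.} H\"older on $Q$ produces the volume factor $|Q|^{1-p/4}=\rho^{3(1-p/4)}$, not $\mu(Q)^{1-p/4}$; there is no mechanism that replaces volume by incidence count here. (The tubes in \eqref{decomposition-g-j} are $\rho\times\rho\times R$, not $\rho^{1/2}\times\rho$ as you wrote, so the $L^2$ computation on $Q$ is at the scale of $Q$ itself.) Consequently the H\"older split in your step (ii) with exponents $(\tfrac{1}{1-p/4},\tfrac{4}{p})$ is being applied to the wrong quantity.

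\textbf{What the paper does instead.} After the Cauchy--Schwarz above yields control on the \emph{sum over $Q$} of the square roots, the paper invokes the hypothesis that the $F(Q)$ are all comparable: pigeonhole to find a single $Q$ with bilinear-mass square root $\lesssim R^{O(\de)}|\cq|^{-1}\eta\|g_1\|_2\|g_2\|_2$, estimate $\sum_Q F(Q)\lesssim R^{O(\de)}|\cq|\int_{Q}|g_1g_2|^{p/2}$ for this $Q$, and raise to the $p/2$ power. This is where the factor $|\cq|^{1-p/2}\eta^{p/2}$ comes from. The alternative $|\cq|^{1-p/4}$ bound is a separate, simpler H\"older argument (your $|\cq|^{1/2}$ route in (iii) is close in spirit to this part). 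You have the right ingredients but have assembled them in the wrong order and with a misread hypothesis.
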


\begin{proof}

By H\"older's inequality and the bilinear estimate \eqref{planar-bilinear} one has
\begin{align}
\label{planar-refinement-1}
    \int_{\cup_{Q\in\cq}Q}|g_1g_2|^{p/2}&\lesssim(\rho^3|\cq|)^{1-p/4}\Big(\int_{\cup_{Q\in\cq}Q}|g_1g_2|^2\Big)^{p/4}\\  \nonumber
    &\lesssim  \rho^{3-p}R^{-p/2}|\cq|^{1-p/4}\|g_1\|_2^{p/2}\|g_2\|_2^{p/2}.
\end{align}
Estimate \eqref{planar-refinement-1} is indeed sharp, unless there are some additional assumptions on the set $\bigcup_{Q\in\cq}Q$ and the functions $g_1,g_2$.

Decompose $g_1,g_2$ as (such decomposition can be viewed as a scale-$\rho^{-1}\times\rho^{-1}\times R^{-1}$ wave packet decomposition of $g_1$ and $g_2$. See also Section \ref{wp-pruning-section})
\begin{equation}
\label{decomposition-g-j}
    g_1=\sum_{T\in\ZT_1}g_{1,T},\hspace{.5cm}g_2=\sum_{T\in\ZT_2}g_{2,T},
\end{equation}
where $\ZT_j$ is a collection planar $\rho\times\rho\times R$-tubes contained in $S=N_\rho(P)\cap B_R$, and $\wh{g_{j,T}}$ is supported in a $\rho^{-1}\times\rho^{-1}\times R^{-1}$-cap, whose shortest direction is the same as the longest direction of $T$.

Suppose every $T\in\ZT_1\cup\ZT_2$ satisfies the following uniform-incidence assumption
\begin{equation}
\label{wpt-uniform-incidence}
    |\{Q\in\cq:Q\cap T\}|\sim \eta.
\end{equation}
Then for each $Q\in\cq$, by H\"older's inequality and the C\'ordoba-Fefferman $L^4$ orthogonality,  
\begin{align}
\label{cordoba-ferfferman}
    \int_{Q}|g_1g_2|^{p/2}&\lesssim \rho^{3-3p/4}\Big(\int_Q|g_1g_2|^2\Big)^{p/4}\\ \nonumber
    &\lesssim \rho^{3-3p/4}\Big(\sum_{T_1\in\ZT_1}\sum_{T_2\in\ZT_2}\int_{2Q}|g_{1,T}g_{2,T}|^2\Big)^{p/4}\\ \nonumber
    &\lesssim \rho^{3-p}R^{-p/2}\Big(\sum_{\substack{T_1\in\ZT_1,\\T_1\cap 2Q\not=\varnothing}}\sum_{\substack{T_2\in\ZT_2,\\T_2\cap 2Q\not=\varnothing}}\|g_{1,T}\|_2^2\|g_{2,T}\|_2^2\Big)^{p/4}.
\end{align}
Note that from the uniform-incidence assumption \eqref{wpt-uniform-incidence},
\begin{align}
\label{l2-from-uniform-incidence}
    &\sum_{Q\in\cq}\Big(\sum_{\substack{T_1\in\ZT_1,\\T_1\cap 2Q\not=\varnothing}}\sum_{\substack{T_2\in\ZT_2,\\T_2\cap 2Q\not=\varnothing}}\|g_{1,T}\|_2^2\|g_{2,T}\|_2^2\Big)^{1/2}\\ \nonumber
    \leq &\,\Big(\sum_{Q\in\cq}\sum_{\substack{T_1\in\ZT_1,\\T_1\cap 2Q\not=\varnothing}}\|g_{1,T}\|_2^2\Big)^{1/2}\Big(\sum_{Q\in\cq}\sum_{\substack{T_2\in\ZT_2,\\T_2\cap 2Q\not=\varnothing}}\|g_{2,T}\|_2^2\Big)^{1/2}\lesssim \eta \|g_1\|_2\|g_2\|_2.
\end{align}

Now we will make use of the assumption that $F(Q)$ are about the same up to a factor $R^{\delta}$. Indeed, by pigeonholing in \eqref{l2-from-uniform-incidence}, there is a particular $Q$ so that
\begin{equation}
    \Big(\sum_{\substack{T_1\in\ZT_1,\\T_1\cap 2Q\not=\varnothing}}\sum_{\substack{T_2\in\ZT_2,\\T_2\cap 2Q\not=\varnothing}}\|g_{1,T}\|_2^2\|g_{2,T}\|_2^2\Big)^{1/2}\lesssim R^{O(\delta)} |\cq|^{-1}\eta \|g_1\|_2\|g_2\|_2.
\end{equation}
Plug it back to \eqref{cordoba-ferfferman} with this particular $Q$ so that
\begin{align}
\label{planar-refinement-2}
    \sum_{Q\in \mathcal{Q}} F(Q) &\lesssim R^{O(\delta)}|\cq|\int_{Q}|g_1g_2|^{p/2}\\ \nonumber
    &\lesssim R^{O(\delta)}\rho^{3-p}R^{-p/2}|\cq|\Big(\sum_{\substack{T_1\in\ZT_1,\\T_1\cap 2Q\not=\varnothing}}\sum_{\substack{T_2\in\ZT_2,\\T_2\cap 2Q\not=\varnothing}}\|g_{1,T}\|_2^2\|g_{2,T}\|_2^2\Big)^{p/4}\\ \nonumber
    &\lesssim R^{O(\delta)}\rho^{3-p}R^{-p/2}(|\cq|^{1-p/2}\eta^{p/2})\|g_1\|_2^{p/2}\|g_2\|_2^{p/2}.
\end{align}
We can conclude the proof by combining  \eqref{planar-refinement-1} and \eqref{planar-refinement-2}. 
\end{proof}

%Here we are interested in a local $L^p$ estimate, $p<4$. That is, an estimate like
%\begin{equation}
 %   \int_{\cup_{Q\in\cq}Q}|g_1g_2|^{p/2}\lesssim R^{\ga_1(X,p)}\rho^{\ga_2(X,p)}\|g_1\|_2^{p/2}\|g_2\|_2^{p/2},
%\end{equation}
%where $\cq$ is a collection of disjoint $\rho$-balls in $S$. 

\subsection{A refined Wolff's hairbrush result}

In \cite{Wolff-Kakeya}, Wolff used a geometric structure called ``hairbrush" to obtain the 5/2 bound for the three-dimensional Kakeya maximal conjecture. Roughly speaking, suppose $\cT$ is a collection of $\de\times\de\times1$ tubes with $\de$-separated directions, and suppose for each tube $T\in\cT$ there is an associated shading $Y(T)\subset T$ with a uniform density assumption $|Y(T)|/|T|\sim\la$ where $\de\ll\la\ll1$. Then for any $\e>0$, 
\begin{equation}
\label{Wolff-result}
    \Big|\bigcup_{T\in\cT}Y(T)\Big|\geq c_\e\de^{\e+1/2}\la^{5/2}(\de^2|\cT|).
\end{equation}
While a careful analysis suggests that \eqref{Wolff-result} is sharp only if for each tube $T\in\cT$, the shading $Y(T)$ is concentrated on one end of $T$. Thus, the union $\bigcup_{T\in\cT}Y(T)$ can be a much larger set if the shading $Y(T)$ satisfies some two-ends condition (see Figure \ref{two-ends-figure}). 
\begin{figure}

\begin{tikzpicture}

\node[shift={(-2cm,0cm)}, rotate=50,x={(0cm,0cm)}, cylinder, 
    draw = blue, 
    text = purple,
    cylinder uses custom fill, 
    minimum width = .5cm,
    minimum height = 8cm] (c) at (0,0) {};
    
\node[shift={(-2cm,0cm)}, rotate=20,x={(0cm,0cm)}, cylinder, 
    draw = blue, 
    text = purple,
    cylinder uses custom fill, 
    minimum width = .5cm,
    minimum height = 8cm] (c) at (0,0) {};

\node[shift={(3cm,0cm)},rotate=30,x={(0cm,0cm)}, cylinder, 
    draw = blue, 
    text = purple,
    cylinder uses custom fill, 
    minimum width = .5cm,
    minimum height = 8cm] (c) at (0,0) {};
    
\node[shift={(3cm,0cm)},rotate=60,x={(0cm,0cm)}, cylinder, 
    draw = blue, 
    text = purple,
    cylinder uses custom fill, 
    minimum width = .5cm,
    minimum height = 8cm] (c) at (0,0) {};

\node[cylinder, 
    draw = blue, 
    text = purple,
    cylinder uses custom fill, 
    minimum width = .5cm,
    minimum height = 8cm] (c) at (0,0) {};

\fill [gray] (-2cm,0cm) circle (.24cm);
\fill [gray] (-1.8cm,0cm) circle (.24cm);
\fill [gray] (-2.2cm,0cm) circle (.24cm);

\fill [gray] (0cm,0cm) circle (.24cm);
\fill [gray] (.2cm,0cm) circle (.24cm);
\fill [gray] (-.3cm,0cm) circle (.24cm);

\fill [gray] (3cm,0cm) circle (.24cm);
\fill [gray] (3.2cm,0cm) circle (.24cm);
\fill [gray] (3.3cm,0cm) circle (.24cm);

\fill [gray] (-.3cm,2cm) circle (.24cm);
\fill [gray] (-.4cm,1.9cm) circle (.24cm);
\fill [gray] (-.2cm,2.1cm) circle (.24cm);

\fill [gray] (1.2cm,1.2cm) circle (.24cm);
\fill [gray] (1.1cm,1.1cm) circle (.24cm);
\fill [gray] (1cm,1.1cm) circle (.24cm);

\fill [gray] (4.7cm,3cm) circle (.24cm);
\fill [gray] (4.7cm,2.9cm) circle (.24cm);
\fill [gray] (4.6cm,2.7cm) circle (.24cm);

\fill [gray] (5.6cm,1.5cm) circle (.24cm);
\fill [gray] (5.8cm,1.6cm) circle (.24cm);
\fill [gray] (5.4cm,1.4cm) circle (.24cm);

\end{tikzpicture}

\caption{quantitative two-ends}
\label{two-ends-figure}

\end{figure}

\begin{definition}[quantitative two-ends at scale $\epsilon'$]
\label{two-ends-def}
Let $\cT$ be a collection of $\de\times\de\times1$ tubes in $\ZR^3$. Each $T\in\cT$ has shading $Y(T)\subset T$. Suppose we can partition $T$ into $\de^{-\epsilon}$ many $\de^{\epsilon}$-segments 
$T_j$. We say $Y(T)$ is quantitative two-ends at scale $\epsilon'$ ($\epsilon>\epsilon'>\epsilon^{100}>0$) if
\begin{enumerate}
    \item For those $Y(T_j)\not=\varnothing$, $|Y(T_j)|$ are about the same up to a constant multiple.
    \item For each $T$, the number nonempty segments $Y(T_j)$ is bounded below by $\de^{-\epsilon'}$.
\end{enumerate}
\end{definition}

\begin{remark}
\label{two-end-remark}
\rm

If $Y(T)$ is quantitative two-ends at scale $\epsilon'$ and $Y'(T)\subset Y(T)$ is a subset satisfying $|Y'(T)|\gtrapprox|Y(T)|$, then there is a subset of $Y'(T)$ that is also quantitative two-ends at a smaller scale $\epsilon'/2$. This is because by pigeonholing we can find $\gtrapprox\de^{-\epsilon'}$ segments $T_j$ so that $|Y'(T_j)|$ are about the same up to a constant multiple, and their sum is $\gtrapprox|Y(T)|$. We will use this observation in the proof of next Lemma.
\end{remark}

\begin{lemma}
\label{two-ends-hairbrush}
Suppose that $\cT$ is a collection of $\de\times\de\times1$ tube in $\ZR^3$ pointing in $\de$-separated directions, and there are $\lesssim m$ parallel tubes in each direction. Suppose also there is a $\la<1$ so that for each $T\in\cT$, there is a shading $Y(T)\subset T$ satisfying $|Y(T)|\sim\la|T|$. Moreover, the shading $Y(T)$ is quantitative two-ends at scale $\epsilon'$ for some $\epsilon'>0$. Define $A=\bigcup_{T\in\cT} Y(T)$. Then $|A|\gtrapprox \de^{1/2}m^{-1}\la\sum_{T}|Y(T)|\sim\de^{1/2}m^{-1}\la^2(\de^2|\cT|)$.

\smallskip

As a direct corollary, note that by dyadic pigeonholing, there exists a dyadic number $\mu\geq1$ and a subset $A_\mu\subset A$ so that
\begin{enumerate}
    \item Every point in $A_\mu$ intersects $\sim\mu$ many $Y(T)$.
    \item $\sum_{T}|Y(T)|\geq\mu|A_\mu|\gtrapprox \sum_{T}|Y(T)|\sim\la(\de^2|\cT|)$.
\end{enumerate}
Also note that one can apply the bound $|A|\gtrapprox \de^{1/2}m^{-1}\la^2(\de^2|\cT|)$ to obtain $|A_\mu|\gtrapprox\de^{1/2}m^{-1}\la^2(\de^2|\cT|)$ by {\rm(2)} and some dyadic pigeonholing. Hence we also have
\begin{enumerate}
    \item[(i)] $\mu\la\lessapprox \de^{-1/2}m$.
    \item[(ii)] If we let $A_{\mu'}\subset A$ be the subset so that every point in $A_{\mu'}$ intersects $\sim\mu'$ many $Y(T)$, then $|A_{\mu'}|\lessapprox \mu|A_{\mu}|/\mu'$.
\end{enumerate}
\end{lemma}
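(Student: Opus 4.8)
The plan is to follow Wolff's hairbrush argument \cite{Wolff-Kakeya}, the point being that the quantitative two-ends hypothesis keeps us in the favourable regime of that argument throughout, and it is exactly this that upgrades the exponent $\la^{5/2}$ in \eqref{Wolff-result} to $\la^{2}$. I would first reduce to a multiplicity estimate. By dyadic pigeonholing there are a dyadic $\mu\ge1$ and a set $A_\mu\subset A$ on which the number of shadings $Y(T)$ through a point is $\sim\mu$ and $\mu|A_\mu|\gtrapprox\sum_T|Y(T)|\sim\la(\de^2|\cT|)$; since trivially $\mu|A_\mu|\le\sum_T|Y(T)|$, in fact $\mu|A_\mu|\sim\la(\de^2|\cT|)$, which is items (1) and (2). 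Passing to $(\cT',Y(\cdot)\cap A_\mu)$, where $\cT'\subset\cT$ is the ($\gtrapprox|\cT|$-sized) subcollection of tubes retaining $\gtrapprox|Y(T)|$ of their shading in $A_\mu$, the shadings remain quantitative two-ends at scale $\e'/2$ by Remark \ref{two-end-remark}. It therefore suffices to prove the multiplicity bound $\mu\la\lessapprox\de^{-1/2}m$: this is equivalent to the asserted lower bound on $|A|$, and conversely, granting that lower bound and applying it to $(\cT',Y(\cdot)\cap A_\mu)$ gives $|A_\mu|\gtrapprox\de^{1/2}m^{-1}\la^{2}(\de^2|\cT|)$, which with $\mu|A_\mu|\sim\la(\de^2|\cT|)$ forces $\mu\la\lessapprox\de^{-1/2}m$, hence also items (i) and (ii).

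To prove the multiplicity bound I would select a hairbrush. For each $T_0\in\cT'$, the two-ends property of $Y(T_0)$ lets me discard the part of it within $\de^{O(\e)}$ of either end of $T_0$ at the cost of a constant factor; call what remains $Y^{*}(T_0)$, still of measure $\sim\la\de^{2}$. Averaging over $T_0\in\cT'$ produces a $T_0$ with $\sum_{T\in\cT'}|Y(T)\cap Y^{*}(T_0)|\sim\mu|Y^{*}(T_0)|\gtrapprox\mu\la\de^{2}$; since each summand is $\le|Y^{*}(T_0)|\lesssim\la\de^{2}$, the hairbrush $\ch:=\{T\in\cT':Y(T)\cap Y^{*}(T_0)\neq\varnothing\}$ satisfies $|\ch|\gtrapprox\mu$, and every $T\in\ch$ meets $Y(T_0)$ in the central part of $T_0$. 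Using now the two-ends property of each $Y(T)$, a further dyadic pigeonholing lets me assume $\gtrapprox|Y(T)|$ of the mass of $Y(T)$ lies outside $N_{\de^{O(\e')}}(T_0)$ for every $T\in\ch$.

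Now the core. Fix a dyadic $\al$ with $\ch_\al:=\{T\in\ch:\ang(T,T_0)\sim\al\}$ of size $\gtrapprox\mu$ (the narrow range of $\al$ is disposed of by a separate, easier bush/lower-dimensional argument). The tubes of $\ch_\al$ make angle $\sim\al$ with $T_0$ and meet its central part, so they are organised into $\lesssim\al/\de$ planar slabs $W$ — each $W$ the $\de$-neighbourhood of a $2$-plane through $T_0$ — giving $\ch_\al=\bigsqcup_W\ch_{\al,W}$ with $n_W:=|\ch_{\al,W}|$ and $\sum_W n_W\gtrapprox\mu$. Inside a fixed slab $W$ the tubes of $\ch_{\al,W}$ form a $2$-dimensional bush of aperture $\sim\al$ with $\lesssim\min(m,\al/\de)$ tubes per direction, and here I would invoke Wolff's planar ($2$-dimensional) Kakeya-with-shading estimate; the decisive point is that the two-ends property keeps $\bigcup_{T\in\ch_{\al,W}}Y(T)$ away from $N_{\de^{O(\e')}}(T_0)$ — that is, away from the focus of the bush — and since the focused configuration is precisely the extremiser of the planar estimate, one gains an extra power of $\la$ there. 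Combining the resulting lower bounds for $|\bigcup_{T\in\ch_{\al,W}}Y(T)|$ across the slabs (using that they are essentially disjoint away from $T_0$), together with $\sum_W n_W\gtrapprox\mu$, the relation $\mu|A|\sim\la(\de^2|\cT|)$, and optimisation over $\al$, yields $\mu\la\lessapprox\de^{-1/2}m$.

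I expect the main obstacle to be the planar step: carrying the direction-multiplicity $m$ through the two-dimensional argument, and making rigorous the heuristic that the two-ends hypothesis rules out the focused extremiser of the planar Kakeya-with-shading inequality (so that the $\la^{3/2}$ of Wolff's planar bound improves to $\la$). A secondary but real difficulty is the overlap bookkeeping for the planar slabs near $T_0$ — controlling how many slabs a given point of a shading can belong to — which is the reason for the two-ends reductions performed in the hairbrush-selection step, and which may force working at a slightly coarser spatial scale than $\de$.
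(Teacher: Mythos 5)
Your proposal starts from Wolff's hairbrush framework, which is indeed the point of departure, but it then diverges from the paper's argument in a way that leaves a genuine gap — and the step you flag as the ``main obstacle'' is in fact not a step the paper takes at all.

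The concrete gap is the planar step. You reduce (correctly) to proving $\mu\la\lessapprox\de^{-1/2}m$, select a hairbrush $\ch$ rooted at some $T_0$ with $|\ch|\gtrapprox\mu$, decompose $\ch$ into angular bands and planar slabs $W$, and then appeal to an upgraded planar Kakeya-with-shading estimate, claiming that the two-ends hypothesis upgrades an exponent $\la^{3/2}$ to $\la$ because the focused bush is the extremiser. You say yourself that you do not see how to make this rigorous, and the exponent $\la^{3/2}$ you attribute to the planar bound does not match the standard 2D X-ray/C\'ordoba estimate (which has $\la^2$ in the relevant form). As written, this black box does not exist, so the argument does not close.

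What the paper does is structurally different. It does not sharpen any planar estimate. Before running a hairbrush it inserts a quantitative broad-narrow reduction \emph{in direction space at each point} $p\in A$ — this is its Lemma~\ref{two-ends-lem} — which yields a single angular scale $\rho$ and localizes to a fat $\rho\times\rho\times 1$ tube $\bar T$, with the property that at each $p\in A_\rho(\bar T)$ the tubes through $p$ come from $\gtrsim K_1$ separated $\rho/K_0$-caps (``quantitatively broad''). Inside $\bar T$ it proves two bounds: a trivial multiplicity bound $\nu|A_\rho(\bar T)|\gtrapprox\la(\de^2|\cT_\rho(\bar T)|)$, and the Wolff hairbrush bound $|A_\rho(\bar T)|\gtrapprox\la^3\nu\de m^{-1}\rho^{-1}$ (which uses the physical two-ends of $Y_\rho(T)$ to keep the hairbrush shadings away from the root and the angular broadness to spread them across many slabs, but keeps the exponent $\la^3$). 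Taking the geometric mean and using that inside $\bar T$ one has $\de^2|\cT_\rho(\bar T)|\lesssim m\rho^2$ with $\rho\leq1$ gives the stated $\la^2$. The gain over Wolff's $\la^{5/2}$ therefore comes from this interpolation, which is enabled by the localization to $\bar T$, not from any improvement of the 2D ingredient.

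So the physical two-ends of $Y(T)$ is indeed used roughly as you intuit (to push the hairbrush shadings off the root tube), but the second crucial ingredient — the angular quantitative two-ends reduction and the resulting confinement to a single $\rho$-tube $\bar T$ — is absent from your proposal, and without it the trivial multiplicity bound does not combine with the hairbrush bound to produce $\la^2$. To repair your argument you would either need to add this angular reduction and interpolate as the paper does, or actually prove the improved planar estimate you posit, which would be a new (and so far unsubstantiated) ingredient.
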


\begin{proof}

Recall that $A=\bigcup_{T\in\cT}Y(T)$. What follows is a  ``quantitative broad-narrow" reduction on every point $p\in A$.  Define for each $p\in A$ a set of tubes 
\begin{equation}
    \cT(p)=\{T\in\cT:p\in Y(T)\},
\end{equation}
and for any directional cap $\si\subset\ZS^2$ a subset
\begin{equation}  
    \cT_\si(p)=\{T\in\cT(p):\text{the direction of }T\text{ belongs to }\si\}.
\end{equation}

\begin{lemma}
\label{two-ends-lem}
Let $K_0=e^{(\log\de^{-1})^{1/2}}$ and $K_1=(\log\rho^{-1})^{\e^{-10}}$. Suppose $|\cT(p)|\geq \de^{-\e/2}$. Then there are a scale $\rho$ with $\de\leq \rho\leq 1$, a directional cap $\si\subset\ZS^2$ with $d(\si)\sim  \rho$, a collection of directional caps $\Om$ with $|\Om|\geq K_1$, so that $|\cT_\om(p)|$ are about the same up to a constant multiple for $\om\in\Om$. Moreover,  $\om\subset\si$, $d(\om)\sim \rho/K_0$ for any $\om\in\Om$,  $\sum_{\om\in\Om}|\cT_\om(p)|\gtrapprox|\cT_\si(p)|$, and 
\begin{equation}
\label{two-ends-eq}
    |\cT_\si(p)|\gtrapprox|\cT(p)|.
\end{equation}
\end{lemma}

\begin{proof}
The proof is quite standard. We will prove the lemma by an iterative argument. First partition $\ZS^2$ into $\sim K_0^2$ many $1/K_0$-caps $\om_1$. By pigeonholing there is a subcollection $\Om_1$ so that $|\cT_{\om_1}(p)|$ are about the same up to a constant multiple for $\om_1\in\Om_1$, and
\begin{equation}
    \sum_{\om_1\in\Om_1}|\cT_{\om_1}(p)|\gtrsim (\log \de^{-1})^{-1}|\cT(p)|.
\end{equation}
If $|\Om_1|\geq K_1$, then we stop. Otherwise, pick any $\om_1\in\Om_1$, partition it into $\sim K_0^2$ many $1/K_0^2$-caps $\om_2$ and repeat the argument above. At the $m$-th step of the iteration (if the iteration does not stop at step $m$), we have a collection of caps $\Om_m$ and a $1/K_0^{m-1}$ cap $\om_{m-1}\in\Om_{m-1}$ with $d(\om_{m-1})\sim 1/K_0^{m-1}$ so that for every $\om_m\in\Om_m$, $d(\om_m)\sim 1/K_0^m$ and $\om_m\subset\om_{m-1}$, as well as
\begin{equation}
    \sum_{\om_m\in\Om_m}|\cT_{\om_m}(p)|\gtrsim (\log \de^{-1})^{-m}|\cT(p)|.
\end{equation}
Since $|\cT(p)|\geq\de^{-\e/2}$, the iteration will eventually stop at some step $n\lesssim (\log \de^{-1})^{1/2}$ with $1/K_0^n\geq\de$. Denote by $\si=\om_{n-1}$, $\rho=1/K_0^{n-1}$, and $\Om=\Om_n$ so that $|\Om|\geq K_1$. To see \eqref{two-ends-eq} holds, note that $|\cT(p)|\lesssim (\log \de^{-1})^{(\log\de^{-1})^{1/2}}|\cT_\si(p)|\lessapprox|\cT_\si(p)|$.
\end{proof}

For any point $p\in A$ with $|\cT(p)|\geq\rho^{-\e/2}$, by Lemma \ref{two-ends-lem} define
\begin{equation}
    \ang(p)=r \text{   and   }\si(p)=\si.
\end{equation}
If we already have $|A|\gtrapprox\de^{1/4}m^{-1}\la^2(\de^2|\cT|)$ then there is nothing to prove. Otherwise, for those $p\in A$ with $|\cT(p)|\geq \de^{-\e/2}$, define 
\begin{equation}
    A_\rho=\{p\in A: \ang(p)=\rho\}.
\end{equation}
Note that the number of possible choices of dyadic $\rho$ is $\lesssim\log\de^{-1}$. By Lemma \ref{two-ends-lem} and a pigeonholing on the dyadic numbers $\de\leq \rho\leq 1$, there exists a $\rho$ so that 
\begin{equation}
    \sum_{p\in A_\rho}|\cT_{\si(p)}(p)|\gtrapprox\sum_{p\in A_\rho}|\cT(p)|\gtrapprox \sum_{T\in\cT}|Y(T)|.
\end{equation}
Define a new shading
\begin{equation}
    Y_{\rho}(T)=\{p\in Y(T)\cap A_\rho: T\in\cT_{\si(p)}(p)\}.
\end{equation}

\begin{remark}
\label{two-ends-remark}

\rm

Every point $p\in A_\rho$ is now quantitative $\rho/K_0$-broad with respect to the shading $Y_\rho$. That is, the set $\cT_{\rho}(p)=\{T\in\cT:p\in Y_{\rho}(T)\}$ can be partition into $\geq K_1$ subcollections $\{\cT_{\rho,\om}(p)\}_\om$ so that $|\cT_{\rho,\om}(p)|$ are about the same, and for most pairs $(\om,\om')$, tubes in $\cT_{\rho,\om}$ and $\cT_{\rho,\om'}$ are $\rho/K_0$-transverse. The quantitative lower bound $\geq K_1$ is crucial, since it implies that if $\cT_{\rho}'(p)\subset\cT_{\rho}(p)$ is a subset that contains a fraction $\gtrsim (\log\rho^{-1})^{-100}$ tubes in $\cT_{\rho}(p)$, then most pairs of tubes in $\cT_{\rho}'(p)$ are still $\rho/K_0$-transverse. 

\end{remark}

Let $\Si$ be a collection of finitely overlapping $\rho$-caps of $\ZS^2$. For each $\si\in\Si$, let $\bar\cT_\si$ be a collection of parallel $\rho$-tubes pointing to the directional cap $\si$ that forms a finitely overlapping cover of the unit ball. Denote by $\bar\cT=\bigcup_{\si\in\Si}\bar\cT_\si$. For any $\bar T\in\bar\cT$, let
\begin{equation}
    \cT(\bar T)=\{T\in\cT:T\subset \bar T\}.
\end{equation}
Note that for each $p\in A_\rho$ there is a $\bar T\in\bar\cT$ depending on $p$ so that
\begin{equation}
\label{T-r-p}
    \cT_\rho(p)=\cT_{\si(p)}(p)=\{T\in\cT:p\in Y_{\rho}(T)\}=\{T\in\cT(\bar T):p\in Y_{\rho}(T)\}.
\end{equation}
Hence by Lemma \ref{two-ends-lem} and pigeonholing again there is a subset $\cT_\rho\subset\cT$ with $|\cT_\rho|\gtrsim (\log\rho^{-1})^{-2}|\cT|$ so that
\begin{enumerate}
    \item $|Y_{\rho}(T)|\gtrapprox \la|T|$ for all $T\in\cT_\rho$. 
    \item $\sum_{T\in\cT_\rho}|Y_\rho(T)|\gtrsim (\log\rho^{-1})^{-2}\sum_{p\in A_\rho}|\cT_{\si(p)}(p)|=(\log\rho^{-1})^{-2}\sum_{p\in A_\rho}|\cT_{\rho}(p)|$.
    \item For each $\bar T\in\bar\cT$, either $\cT_\rho(\bar T)=\varnothing$ or $|\cT_\rho(\bar T)|\gtrsim  (\log\rho^{-1})^{-2}|\cT(\bar T)|$, where $\cT_\rho(\bar T)$ is defined as
    \begin{equation}
        \cT_\rho(\bar T)=\{T\in\cT_\rho:T\subset \bar T\}.
    \end{equation}
\end{enumerate}
Denote by
\begin{equation}
\label{r-broad-part}
    A_\rho(\bar T)=\bigcup_{T\in\cT_\rho(\bar T)}Y_{\rho}(T).
\end{equation}
By the definition of $A_\rho$ we know 
\begin{equation}
    \sum_{\bar T\in\bar\cT}|A_\rho(\bar T)|\lessapprox |A_\rho|.
\end{equation}
Hence to prove the lemma, it suffices to prove that for any $\bar T\in\bar\cT$ 
\begin{equation}
\label{pf-two-end-reduction}
    |A_\rho(\bar T)|\gtrapprox\de^{1/2}m^{-1}\la^2(\de^2|\cT_\rho(\bar T)|).
\end{equation}
Let us fix a $\bar T\in\bar\cT$ from now on.

For each $p\in A_\rho(\bar T)$, recall $\cT_\rho(p)=\{T\in\cT(\bar T):p\in Y_\rho(T)\}$ in \eqref{T-r-p}. Since $|\cT_\rho(\bar T)|\gtrsim  (\log\rho^{-1})^{-2}|\cT(\bar T)|$, by pigeonholing there are a dyadic number $\nu>1$ and a subset $A'(\bar T)\subset A_\rho(\bar T)$ with 
\begin{equation}
    \int_{A'(\bar T)}|\cT'(p)|dp\gtrsim  (\log\rho^{-1})^{-2} \sum_{T\in\cT_\rho(\bar T)}|Y_\rho(T)|\gtrapprox(\log\rho^{-1})^{-4}\sum_{p\in A_r}|\cT_{\rho}(p)|.
\end{equation}
so that for all $p\in A'(\bar T)$, $|\cT'(p)|\sim \nu\gtrsim  (\log\rho^{-1})^{-4} |\cT_\rho(p)|$. Note that by Remark \ref{two-ends-remark}, every point in $A'(\bar T)$ is still quantitative $\rho$-broad with respect to the shading $Y_\rho$. Define a new shading
\begin{equation}
    Y'(T)=Y_\rho(T)\cap A'(\bar T).
\end{equation}
By pigeonholing there is a subset $\cT'(\bar T)\subset\cT_\rho(\bar T)$ with $|\cT'(\bar T)|\gtrapprox|\cT_\rho(\bar T)|$ so that $|Y'(T)|\gtrapprox \la|T|$ for all $T\in\cT'(\bar T)$.

Thus, on one hand
\begin{equation}
\label{hairbrush-esti-1}
     \nu|A_\rho(\bar T)|\gtrsim\nu|A'(\bar T)|\gtrapprox \sum_{T\in \cT'(\bar T)}|Y(T)|\approx\la(\de^2|\cT_\rho(\bar T)|).
\end{equation}
On the other hand, pick any $T\in\cT'(\bar T)$. Since every point in $Y'(T)$ is quantitative $\rho/K_0$-broad with respect to the shading $Y_\rho$, and note that $Y_\rho(T)$ is quantitative two-ends (see Remark \ref{two-end-remark}) for every tube $T\in\cT_\rho(\bar T)$. By Wolff's hairbrush argument and the two-dimensional X-ray estimate, we get
\begin{equation}
    |A_\rho(\bar T)|\gtrapprox\la^3\nu\de m^{-1}\rho^{-1}.
\end{equation}
Since $\rho<1$ and since $\de^2|\cT_\rho(\bar T)|\lesssim m$ by assumption, the above two estimates imply 
\begin{equation}
    |A_\rho(\bar T)|\gtrapprox\de^{1/2}m^{-1}\la^2(\de^2|\cT_\rho(\bar T)|),
\end{equation}
which is what we need in \eqref{pf-two-end-reduction}.
\end{proof}

\section{Quantitative two-ends reductions}
\label{two-ends-section}
In this section we conduct two quantitative two-ends reductions to the tube set $\ZT_{\la_1}[R]$ (see \eqref{la-1}). The two reductions will be applied to two different methods of bounding $\sum_{O'\in\co_{leaf}}\|Ef\|_{\BL^p(O')}^p$, respectively. Before elaborating on the reductions, let us briefly demonstrate our two methods. Along the way we will recognize the need of the two-ends reductions---it strengthens some Kakeya estimates.

\subsection{Brief outline for the two methods}
\label{brief-outline}
The first method deals with the case $R^{1/2}\leq r\leq R^{2/3}$. Suppose that for each scale-$r$ fat surface $S_t\in\cs_t$ (see Lemma \ref{algebraic-lem-1} for $\cs_t$), there are $\sim \si_1$ scale-$r$ directional caps that contribute to $\ZT_{S_t}$. Hence by $L^2$ orthogonality (see \eqref{related-unrelated-fcn} for $f_k^{\not\sim}$)
\begin{equation}
\label{estimate-a}
    \|(f_k^{\not\sim})^{\ZT_{S_t}}\|_{2}^2\lesssim \si_1\sup_{\theta', \,d(\theta')=r^{^{-1/2}}}\|(f_k^{\not\sim})^{\ZT_{S_t,\theta'}}\|_{2}^2.
\end{equation}

Each $r$-tube $T'\in\ZT_{S_t}$ is associated to an $R/r^{1/2}\times R/r^{1/2}\times R$-tube $\wt T\supset T'$ that has the same direction as $T'$.
%(In fact, if denoting $\theta'$ the scale-$r$ directional cap of $T'$, then there is only one fat tube $\wt T\supset T'$ such that $\theta'$ also serves as the directional cap of $\wt T$).
Note that only those scale-$R$ wave packets $Ef_T$ with $T\subset \wt T$ make contribution to the scale-$r$ wave packet $Ef_{T'}$ (see Lemma 7.1 in \cite{Guth-II}). Denote $\wt\cT$ as the collection of the fat tubes so that each $\wt T\in\wt \cT$ is associated to at least one $r$-tube $T'\in\bigcup_{S_t\in\cs_t}\ZT_{S_t}$ such that $Ef_{T'}\not=0$. Suppose $R/r^{1/2}\leq\rho_j$ but $R/r^{1/2}\geq\rho_{j-1}$ for some scale $\rho_j$ in \eqref{scales-1}. Then from the wave packet pruning in Section 2.1 we know that $\wt\cT$ contains $\lesssim R^{O(\de)} \ka_2(j)$ parallel tubes. Recall the broom estimate \eqref{broom-esti-2} 
\begin{equation}
\label{estimate-b}
    \|(f_k^{\not\sim})^{\ZT_{S_t,\theta'}}\|_{2}^2\lesssim R^{O(\de)} \ka_2(j)^{-1}r^{1/2}R^{-1/2}\|f_{\theta'}\|_{2}^2.
\end{equation}

A crucial observation here is that there is a Kakeya type constraint between $\si_1$ and $\ka_2(j)$. Indeed, suppose further that each $R/r^{1/2}$-ball in $\wt T$ contains at least one fat surface $S_t\in\cs_t$. Then each $R/r^{1/2}$-ball in the set $\bigcup_{\wt T\in\wt\cT}\wt T$ intersects $\gtrsim\si_1$ fat tubes in $\wt\cT$. Hence by Wolff's 5/2-maximal Kakeya estimate and the triangle inequality we have\footnote{There is in fact a stronger ``X-ray" estimate when $\ka_2(j)$ is big. But it is not useful to us here since $\ka_2(j)$ can be as small as $1$.}
\begin{equation}
\label{wolff-kakeya-1}
    \si_1\ka_2(j)^{-1}\lessapprox r^{1/4}.
\end{equation}
Plugging this back to \eqref{estimate-a} and \eqref{estimate-b} one gets a refinement
\begin{equation}
\label{estimate-c}
    \|(f_k^{\not\sim})^{\ZT_{S_t}}\|_{2}^2\lesssim R^{O(\de)} r^{3/4}R^{-1/2}\|f_{\theta'}\|_2^2,
\end{equation}
which is stronger than $\|(f_k^{\not\sim})^{\ZT_{S_t}}\|_{2}^2\lessapprox rR^{-1/2}\|f_{\theta'}\|_2^2$ that only uses the polynomial Wolff axiom (i.e. $\si_1\lesssim r^{1/2}$) and the broom estimate.

However, it is not always true that  each $R/r^{1/2}$-ball in $\wt T$ contains a fat surface $S_t\in\cs_t$. To deal with this issue,  define a shading $Y(\wt T)\subset\wt T$ as the union of $R/r^{1/2}$-balls in $\wt T$ that contains at least an $S_t\in\cs_t$ (see Definition \ref{shading-def-2}).  After pigeonholing and possibly refining $\wt \cT$, we may assume $|Y(\wt T)|\sim\la|\wt T|$ for some uniform constant $\la\leq1$ for all fat tubes $\wt T\in\wt\cT$. Again,  by Wolff's 5/2-maximal Kakeya estimate and the triangle inequality we have
\begin{equation}
\label{before-two-ends}
    \si_1\ka_2(j)^{-1}\lessapprox \la^{-3/2}r^{1/4}.
\end{equation}
The above estimate can be strengthened into, by Lemma \ref{two-ends-hairbrush},
\begin{equation}
\label{after-two-ends}
    \si_1\ka_2(j)^{-1}\lessapprox \la^{-1}r^{1/4},
\end{equation}
if the shading $Y(\wt T)$ is quantitative two-ends (see Definition \ref{two-ends-def}). Plugging this back to \eqref{estimate-a} and \eqref{estimate-b} one gets a refinement
\begin{equation}
\label{estimate-d}
    \|(f_k^{\not\sim})^{\ZT_{S_t}}\|_{2}^2\lesssim R^{O(\de)} \la^{-1}r^{3/4}R^{-1/2}\|f_{\theta'}\|_2^2,
\end{equation}
which is stronger than $\|(f_k^{\not\sim})^{\ZT_{S_t}}\|_{2}^2\lessapprox rR^{-1/2}\|f_{\theta'}\|_2^2$ when $\la\geq r^{-1/4}$.

\smallskip

The second method deals with the case $r\geq R^{2/3}$. Unlike the first method, in the second method we focus on $R$-tubes and $R^{1/2}$-balls. While it is essentially the same reason why we need the quantitative two-ends reduction---it strengthens some Kakeya estimate (see \eqref{before-two-ends} and \eqref{after-two-ends}).

\subsection{Sorting for leaves}
\label{sorting-section}

Recall in Lemma \ref{algebraic-lem-1} that there are a collection of leaves $\co_{leaf}$, and $n$ collections of fat surfaces $\{\cs_t\}_{1\leq t\leq n}$, each of which corresponds to a scale $r_t:=R_{j_t}$. In this subsection we would like to sort and find a $\gtrapprox1$ fraction of $O'\in\co_{leaf}$ that are distributed regularly in $S_t\in\cs_t$, $r_t\geq R^{1/2}$. This can be considered as a preparation for the two-ends reduction.

Let $1\leq m_1\leq m_2$ be two natural numbers with $r_{m_1}\geq R^{2/3}$, $r_{m_2}\geq R^{1/2}$ and $r_{m_1+1}\leq R^{2/3}$, $r_{m_2+1}\leq R^{1/2}$. We will first sort the leaves from the biggest scale $r_1$ to scale $r_{m_1}$, and then from scale $r_{m_1+1}$ to scale $r_{m_2}$.

\subsubsection{First sorting}
\label{first-sorting}

The sorting deals with the case $r_t\geq R^{2/3}$ and will be given in an iterative manner. Let $r_0=R$ and let $\co_{leaf,0}=\co_{leaf}$. Starting from scale $r_1$, suppose we have obtained a collection of leaves $\co_{leaf,t-1}$ at scale $r_{t-1}$. Now we sort $\co_{leaf,t-1}$ with respect to $\cs_t$ to obtain a refinement $\co_{leaf,t}\subset\co_{leaf,t-1}$ with $|\co_{leaf,t}|\gtrapprox|\co_{leaf,t-1}|$.

By pigeonholing, choose a set $\mathbf{q}_t$ of $r_t^{1/2}$--cubes such that 
\begin{enumerate}
\item  each $q\in \mathbf{q}_t$ contains about the same number of leaves in $\mathcal{O}_{leaf, t-1}$,
\item $\bigcup_{q\in\mathbf{q}_t} q$ contains at least a $ (\log R)^{-1}$-fraction of leaves in $\mathcal{O}_{leaf, t-1}.$
\end{enumerate}
Let $\cq$ denote a  set of  finitely overlapping $R^{1/2}$--cubes covering $B_R$. Since each $q\in\bq_t$ contains about the same amount of leaves in $\mathcal{O}_{leaf, t-1}$, by pigeonholing again, we can discard some cubes in $\bq_t$ to have either $S_t\cap Q=\varnothing$ or $S_t\cap Q$ contains about the same number of cubes $q\in\bq_t$ for all nonempty $S_t\cap Q$. Moreover, we still have that $\bigcup_{q\in\mathbf{q}_t} q$ contains at least a $ (\log R)^{-O(1)}$-fraction of leaves in $\mathcal{O}_{leaf, t-1}$.

Now there exists an injection (up to a constant factor)  
\begin{equation}
\label{injection}
    \mathbf{q}_t \rightarrow \mathbf{q}_t\times \mathcal{S}_t\times \cq: q\mapsto (q, S_t, Q)\text{ where } q\subset S_t\cap Q. 
\end{equation}
Each triple $(q, S_t, Q)$ is associated with a unique  triple of parameters $(\lambda_2, \lambda_3, \lambda_6)$ depending implicitly on $t$, where 
\begin{enumerate}
\item $\lambda_2$ means the number of $q'\in \mathbf{q}_t$ such that $q'\subset S_t\cap Q$. Write $\mathbf{q}_{S_t, Q}:=\{ q'\in \mathbf{q}_t: q'\subset S_t\cap Q\}$, then 
    \begin{equation}
    \label{la-2-t}
        |\bq_{S_t,Q}|\sim \la_2.
    \end{equation}
\item $\lambda_6$ means the number of $Q'\in \cq$ such that $|\mathbf{q}_{S_t, Q'}|\sim \lambda_2$ is about $\lambda_6$. For a fixed $S_t$, let $\mathcal{Q}_{S_t}$ denote the set of such $Q'$, then 
   \begin{equation}
    \label{la-6-t}
        |\cq_{S_t}|\sim\la_6. 
    \end{equation}
\item $\lambda_3$ means the number of $S_t'\in \mathcal{S}_t$ such that $|\mathbf{q}_{S_t', Q}|\sim \lambda_2$. For a fixed $Q$, let $\mathcal{S}_t(Q)$ denote the set of such $S_t'$, then 
  \begin{equation}
    \label{la-3-t}
        |\mathcal{S}_t(Q)|\sim\la_3. 
    \end{equation}
\end{enumerate}
We remark that the number $\la_2$ is uniform for all triples $(q,S_t,Q)$. This follows from the definition of $\bq_t$ (see above \eqref{injection}).

Since each $q$ is associated with a unique triple $(q, S_t, Q)$, it is also associated with a unique triple of parameters $(\lambda_2, \lambda_3, \lambda_6)$. By pigeonholing, there exists a uniform triple $(\lambda_2, \lambda_3, \lambda_6)$ such that the $q$'s that are associated with it consist of at least a $(\log R)^{-3}$-fraction of the original set $\mathbf{q}_t$. Note that if $(q, S_t, Q)$ is chosen, so is $(q', S_t, Q)$ for other $q'\in \mathbf{q}_t$ and $q'\subset S_t\cap Q$, namely, $S_t\cap Q$ is considered as a whole when doing pigeonholing. Denote by
\begin{enumerate}
    \item $\cs_t'$ the collection of fat surfaces $S_t$ that $|\cq_{S_t}|\sim\la_6$,
    \item $\bq_{S_t}$ the set of $q\in \mathbf{q}_t$ contained in $S_t$,
    \item $\cq'$ the collection of $R^{1/2}$-cubes $Q$ that $|\cs_t(Q)|\sim \la_3$.
\end{enumerate}
Then we have  $|\bq_{S_t}|\sim\la_2\la_6$ and
\begin{equation}
\label{pigeonholing-eq1}
    \la_2\la_3|\cq'|\approx|\bq_t|\approx \la_2\la_6|\cs_t'|.
\end{equation}
Let $\bq_t'=\{q\in\bq_t:q\in\bigcup_{S_t\in\cs_t'}\bq_{S_t}\}$, so $|\bq_t'|\gtrapprox|\bq_t|$. By \eqref{pigeonholing-eq1} and pigeonholing again there is a $\la_3'$ and a collection of $R^{1/2}$-balls $\cq_{\la_3'}$ so that $|\cs_t'(Q)|\sim\la_3'$ for any $Q\in\cq_{\la_3'}$, and the set $\{q\in\bq_t':q\subset \bigcup_{Q\in\cq_{\la_3'}}Q\}$ contains a fraction $\gtrapprox1$ of $r^{1/2}_t$-cubes in $\bq_t'$. To ease notations, still denote this fraction of $r^{1/2}_t$-cubes by $\bq_t$, and denote $\cs_t$ by $\cs_t'$, $\la_3$ by $\la_3'$. Hence we have
\begin{equation}
\label{pigeonholing-eq2}
    \la_3|\cq_{\la_3}|\approx\la_6|\cs_t|.
\end{equation}

Let $\mathcal{O}_{leaf, t}$ denote the set of leaves in $\mathcal{O}_{leaf, t-1}$ that each $O\in\co_{leaf,t}$ is contained in some $q\in \mathbf{q}_t$.  
Since each $q\in \mathbf{q}_t$ contains about the same number of leaves in $\mathcal{O}_{leaf, t-1}$, we get 
\[
|\mathcal{O}_{leaf, t}|\gtrapprox |\mathcal{O}_{leaf, t-1}|. 
\]
Since $S_t$ is supported in $N_{r_t^{1/2+\de}}(Z_{S_t})\cap B_{S_t}$ for some variety $Z_{S_t}$ with degree at most $d$ and some ball $B_{S_t}$ of radius $r_t$, by Wongkew's theorem (see \cite{Guth-R3} Theorem 4.7) one can bound the Lebesgue measure of $S_t$ from above as $|S_t|\lesssim dr_t^{5/2}$, yielding
\begin{equation}
\label{bq-S-t}
    |\bq_{S_t}|\lessapprox r_t.
\end{equation}

\begin{figure}	
\begin{tikzpicture}
%Q
 \begin{scope}
   [shift={(-1.5,-1.5)},x={(2.5cm,0cm)},
    y={({cos(60)*1.5cm},{sin(60)*1.5cm})},
    z={({cos(90)*2.5cm},{sin(90)*2.5cm})},line join=round,fill opacity=0.5, blue]
  \draw[] (0,0,0) -- (0,0,1) -- (0,1,1) -- (0,1,0) -- cycle;
  \draw[] (0,0,0) -- (1,0,0) -- (1,1,0) -- (0,1,0) -- cycle;
  \draw[] (0,1,0) -- (1,1,0) -- (1,1,1) -- (0,1,1) -- cycle;
  \draw[] (1,0,0) -- (1,0,1) -- (1,1,1) -- (1,1,0) -- cycle;
  \draw[] (0,0,1) -- (1,0,1) -- (1,1,1) -- (0,1,1) -- cycle;
  \draw[] (0,0,0) -- (1,0,0) -- (1,0,1) -- (0,0,1) -- cycle;
 \end{scope}
 % q
  \begin{scope}
   [shift={(0,-.9)},x={(.5cm,0cm)},
    y={({cos(60)*.3cm},{sin(60)*.3cm})},
    z={({cos(90)*.5cm},{sin(90)*.5cm})},line join=round,fill opacity=0.5, dotted, black]
  \draw[fill=red] (0,0,0) -- (0,0,1) -- (0,1,1) -- (0,1,0) -- cycle;
  \draw[fill=red] (0,0,0) -- (1,0,0) -- (1,1,0) -- (0,1,0) -- cycle;
  \draw[fill=red] (0,1,0) -- (1,1,0) -- (1,1,1) -- (0,1,1) -- cycle;
  \draw[fill=red] (1,0,0) -- (1,0,1) -- (1,1,1) -- (1,1,0) -- cycle;
  \draw[fill=red] (0,0,1) -- (1,0,1) -- (1,1,1) -- (0,1,1) -- cycle;
  \draw[fill=red] (0,0,0) -- (1,0,0) -- (1,0,1) -- (0,0,1) -- cycle;
 \end{scope}
 
 \begin{scope}
   [rotate=10,shift={(.3,0)},x={(.5cm,0cm)},
    y={({cos(60)*.3cm},{sin(60)*.3cm})},
    z={({cos(90)*.5cm},{sin(90)*.5cm})},line join=round,fill opacity=0.5, dotted, black]
  \draw[fill=red] (0,0,0) -- (0,0,1) -- (0,1,1) -- (0,1,0) -- cycle;
  \draw[fill=red] (0,0,0) -- (1,0,0) -- (1,1,0) -- (0,1,0) -- cycle;
  \draw[fill=red] (0,1,0) -- (1,1,0) -- (1,1,1) -- (0,1,1) -- cycle;
  \draw[fill=red] (1,0,0) -- (1,0,1) -- (1,1,1) -- (1,1,0) -- cycle;
  \draw[fill=red] (0,0,1) -- (1,0,1) -- (1,1,1) -- (0,1,1) -- cycle;
  \draw[fill=red] (0,0,0) -- (1,0,0) -- (1,0,1) -- (0,0,1) -- cycle;
 \end{scope}

  %big one
  \begin{scope}
   [shift={(-1,-.9)},x={(8cm,0cm)},
    y={({cos(60)*5cm},{sin(60)*5cm})},
    z={({cos(90)*.5cm},{sin(90)*.5cm})},line join=round,fill opacity=0.5, thick, dotted, red]
  \draw[] (0,0,0) -- (0,0,1) -- (0,1,1) -- (0,1,0) -- cycle;
  \draw[] (0,0,0) -- (1,0,0) -- (1,1,0) -- (0,1,0) -- cycle;
  \draw[] (0,1,0) -- (1,1,0) -- (1,1,1) -- (0,1,1) -- cycle;
  \draw[] (1,0,0) -- (1,0,1) -- (1,1,1) -- (1,1,0) -- cycle;
  \draw[] (0,0,1) -- (1,0,1) -- (1,1,1) -- (0,1,1) -- cycle;
  \draw[] (0,0,0) -- (1,0,0) -- (1,0,1) -- (0,0,1) -- cycle;
 \end{scope}
 % big two
   \begin{scope}
   [rotate=10,shift={(-1,0)},x={(8cm,0cm)},
    y={({cos(60)*5cm},{sin(60)*5cm})},
    z={({cos(90)*.5cm},{sin(90)*.5cm})},line join=round,fill opacity=0.5, thick, dotted, red]
  \draw[] (0,0,0) -- (0,0,1) -- (0,1,1) -- (0,1,0) -- cycle;
  \draw[] (0,0,0) -- (1,0,0) -- (1,1,0) -- (0,1,0) -- cycle;
  \draw[] (0,1,0) -- (1,1,0) -- (1,1,1) -- (0,1,1) -- cycle;
  \draw[] (1,0,0) -- (1,0,1) -- (1,1,1) -- (1,1,0) -- cycle;
  \draw[] (0,0,1) -- (1,0,1) -- (1,1,1) -- (0,1,1) -- cycle;
  \draw[] (0,0,0) -- (1,0,0) -- (1,0,1) -- (0,0,1) -- cycle;
 \end{scope}

\node at (6,0.3) {$S_t$};
\node at (6,5) {$S_t'$};

\node at (.55,0.45) {$q'$};
\node at (.3,-.7) {$q$};

\node at (-.5,-1.2) {$Q$};

\node at (3,-2) {$S_t,S_t'$: fat $r^{1/2}$-surfaces; \,\, $Q$: $R^{1/2}$-ball; \,\, $q,q'$: $r^{1/2}$-cubes.};

\node at (3,-2.5) {Every leaf $O'$ is contained in some $r^{1/2}$-ball $q$.};

\end{tikzpicture}
\caption{Relations between different geometric objects}

\label{geometric-object}
\end{figure}

This finishes the sorting for leaves at the scale $r_t$. We remark that now each $Q\in\cq_{\la_3}$ contains about the same amount of leaves in $\co_{leaf,t}$.

\begin{lemma}\label{lem: conservation of lambdas}
Let  $r_t\geq R^{2/3}$ and $\co_{leaf}\subset \co_{leaf,t}$ be a subset with $|\co_{leaf}|\gtrapprox R^{-\de/2}|\co_{leaf,t}|$. Then there exists subsets $\mathbf{q}_t'\subset \mathbf{q}_t, \mathcal{S}_t'\subset \mathcal{S}_t$ and parameters $\lambda_2', \lambda_3', \lambda_6'$ satisfying Subsection~\ref{first-sorting} with $\lambda_j'$ in the place of $\lambda_j$, $j=2, 3, 6$ and $|\lambda_j'|\gtrsim R^{-4\delta} |\lambda_j|$. In addition, each $q\in \mathbf{q}_t'$ contains about the same number of leaves in $\co_{leaf}$  (up to a factor of $R^{\delta}$) and $|\co_{leaf}'|\gtrapprox|\co_{leaf}|$ where $\co_{leaf}'\subset \co_{leaf}$ is the set of leaves that each of which is contained in some $q\in\bq_t'$. 
\end{lemma}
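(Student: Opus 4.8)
The plan is to rerun the pigeonholing of Subsection~\ref{first-sorting} essentially verbatim, but feeding in the smaller family $\co_{leaf}$ in place of $\co_{leaf,t-1}$, and to check that each of the boundedly many dyadic pigeonholing steps costs only a factor $(\log R)^{O(1)}$, which is $\le R^\de$ once $R$ is large (recall $\de=\e^3$ is fixed). What makes this possible is that the original sorting left the configuration \emph{saturated}: on the retained data every surviving pair $(S_t,Q)$ has $|\bq_{S_t,Q}|\sim\la_2$, every $S_t\in\cs_t$ occurs in $\sim\la_6$ such pairs, every $Q\in\cq_{\la_3}$ occurs in $\sim\la_3$ such pairs, and each $q\in\bq_t$ lies in at least one (and, by finite overlap of the balls $Q$ and of the neighbourhoods $\{N_{R_{j_t-1}^{1/2+\de}}(S_t)\}$ from Lemma~\ref{algebraic-lem-1}, in $O(1)$) such pairs. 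Passing to a subfamily of cubes can only \emph{decrease} these local counts, so as long as the total cube count is conserved up to $R^{O(\de)}$ the decreased counts are forced to stay comparable to the originals.

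First I would extract $\bq_t'$. Every $q\in\bq_t$ contains about the same number, say $N$, of leaves of $\co_{leaf,t-1}$, hence of $\co_{leaf,t}$ (any $\co_{leaf,t-1}$-leaf inside a chosen cube is a $\co_{leaf,t}$-leaf), so $|\co_{leaf,t}|\sim N|\bq_t|$ and $\sum_{q\in\bq_t}N_q=|\co_{leaf}|\gtrapprox R^{-\de/2}N|\bq_t|$, where $N_q\le N$ is the number of $\co_{leaf}$-leaves inside $q$. A dyadic pigeonhole in $N_q$ produces a dyadic $N'\le N$ and $\bq_t'\subset\bq_t$ with $N_q\sim N'$ on $\bq_t'$ and $\sum_{q\in\bq_t'}N_q\gtrapprox|\co_{leaf}|$; since $N'|\bq_t'|\ge\sum_{q\in\bq_t'}N_q$ and $N'\le N$ this forces $|\bq_t'|\gtrapprox R^{-\de}|\bq_t|$. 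This $\bq_t'$ already has the uniform-leaf-count property, and each later step only discards a $(\log R)^{-O(1)}$-fraction of its cubes, so the property persists and $|\co_{leaf}'|\gtrapprox|\co_{leaf}|$ survives to the end (using $|\co_{leaf}'|\sim\sum_{q\in\bq_t'}N_q$).

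Next I would push $\bq_t'$ through the chain of Subsection~\ref{first-sorting} in the same order. For $\la_2$: among the pairs $(S_t,Q)$ retained by the original sorting set $\la_2(S_t,Q):=|\{q'\in\bq_t':q'\subset S_t\cap Q\}|\le\la_2$; since each $q\in\bq_t$ lies in $O(1)$ and at least one such pair, $\sum_{(S_t,Q)}\la_2(S_t,Q)\sim|\bq_t'|\gtrapprox|\bq_t|$, while $|\bq_t|\sim\la_2\cdot(\text{number of these pairs})$. A dyadic pigeonhole then gives $\la_2'\le\la_2$ with $\la_2'\gtrapprox\la_2$ and a fraction $\gtrapprox1$ of the pairs on which $\la_2(S_t,Q)\sim\la_2'$ — this is the saturation step. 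Discarding the remaining pairs and the cubes lying in no surviving pair, I repeat the same argument twice more: for fixed $S_t$ the number $\la_6'(S_t)\le|\cq_{S_t}|\sim\la_6$ of balls with $(S_t,Q)$ surviving satisfies $\sum_{S_t}\la_6'(S_t)\sim|\bq_t'|/\la_2'\gtrapprox\la_6|\cs_t|$, giving $\la_6'\gtrapprox\la_6$ and $\cs_t'\subset\cs_t$; symmetrically $\la_3'(Q)\le|\cs_t(Q)|\sim\la_3$ for $Q\in\cq_{\la_3}$ and $\sum_Q\la_3'(Q)\gtrapprox\la_3|\cq_{\la_3}|$ give $\la_3'\gtrapprox\la_3$, $\cq'$, and then $\cq_{\la_3'}$ as in the last refinement of the subsection. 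The relations \eqref{pigeonholing-eq1}, \eqref{pigeonholing-eq2} with the primed quantities and $|\bq_{S_t}'|\sim\la_2'\la_6'$ come out exactly as before. There are $O(1)$ pigeonholes, each losing at most $(\log R)^{O(1)}\le R^\de$, so altogether $\la_j'\gtrsim R^{-4\de}\la_j$ for $j=2,3,6$.

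The only genuinely delicate point is the saturation step — that a local quantity bounded above by its original value whose global sum is still almost the original total must itself be comparable to that original value. This is exactly where one uses, and must not discard, the fact that the first sorting was carried out so that the local counts were \emph{equal} to $\sim\la_2,\la_3,\la_6$ on the retained blocks (and the relations \eqref{pigeonholing-eq1}, \eqref{pigeonholing-eq2} hold as equalities up to constants), not merely as upper bounds; with that in hand every inequality in the chain is tight up to $R^{O(\de)}$ and the lemma follows. Everything else is a mechanical repetition of Subsection~\ref{first-sorting}, so in a full write-up I would simply record these pigeonholes rather than reproduce the bookkeeping.
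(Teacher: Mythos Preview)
Your proposal is correct and takes essentially the same approach as the paper: discard cubes carrying too few leaves of the new family, then rerun the pigeonholing of Subsection~\ref{first-sorting} verbatim, and observe by saturation that each new dyadic parameter $\la_j'$ cannot fall below $R^{-O(\de)}\la_j$ because the total cube count is conserved. The paper phrases the saturation step as a contradiction (``if $\la_2'\le R^{-4\de}\la_2$ then $|\co_{leaf}'|$ would be too small'') while you phrase it directly (``the average of $\la_2(S_t,Q)$ is $\gtrapprox\la_2$ and each term is $\le\la_2$''); these are the same argument, and your explicit mention of the finite-overlap fact from Lemma~\ref{algebraic-lem-1} justifying $\sum_{(S_t,Q)}\la_2(S_t,Q)\sim|\bq_t'|$ is a point the paper leaves implicit.
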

\begin{proof}

Since $|\co_{leaf}|\gtrapprox R^{-\de/2}|\co_{leaf, t}|$, and each $q\in \mathbf{q}_t$ in Subsection~\ref{first-sorting} contains about the same number of leaves in $\co_{leaf, t}$, we can discard the $r_t^{1/2}$--cubes from $\mathbf{q}_t$  that contains a less than  $R^{-\delta}$-fraction of the original cells. 

Now each $q\in \mathbf{q}_t$ contains about the same number of leaves in $\co_{leaf}$ and $|\co_{leaf}'|\gtrapprox |\co_{leaf}|$ where $\co_{leaf}'\subset \co_{leaf}$ is the set of leaves in $q\in \mathbf{q}_t$.

Proceed as in Subsection~\ref{first-sorting} and find a subset $\mathbf{q}_t'\subset \mathbf{q}_t$ satisfying same
the uniform property and contains a significant fraction of the leaves and for each $q\in \mathbf{q}_t'$, $q$ is uniquely associated with a triple $(q, S_t, Q)\in \mathbf{q}_t'\times \mathcal{S}_t'\times \mathcal{Q}'$ and a triple of parameters $(\lambda_2', \lambda_3', \lambda_6')$, where $\cs_t'\subset\cs_t$ and $\cq'\subset\cq$. If $\lambda_2' \leq R^{-4\delta} \lambda_2$, then $|\co_{leaf}'|\lessapprox R^{-2\delta} |\co_{leaf,t}|$, which is a contradiction. Same reason applies to $\lambda_3,\lambda_6$. 
\end{proof}

\subsubsection{Second sorting}
\label{second-sorting}
The second sorting will also be given in an iterative manner, and is simpler than the first one. From the first sorting we know that there is a collection of leaves $\co_{leaf,m_1}$ at scale $r_{m_1}$. Starting from the scale $r_{m_1+1}$, suppose we have obtained a collection of leaves $\co_{leaf,t-1}$ from scale $r_{t-1}$. Now we sort $\co_{leaf,t-1}$ at scale $r_t$ to obtain a refinement $\co_{leaf,t}\subset\co_{leaf,t-1}$ with $|\co_{leaf,t}|\gtrapprox|\co_{leaf,t-1}|$.

By pigeonholing, we can find $\cb(t)$, a collection finitely overlapping $R/r_t^{1/2}$-balls in $B_R$, so that each $B\in\cb(t)$ contains about the same amount of leaves in $\co_{leaf,t-1}$ up to a constant multiple, and the set $\co_{leaf,t}:=\{O'\in\co_{leaf,t-1}:O'\subset B\text{ for some }B\in\cb(t)\}$ satisfies $|\co_{leaf,t}|\gtrapprox|\co_{leaf,t-1}|$. This finishes the sorting for leaves at the scale $r_t$.

\medskip

Finally, to ease the notation we set $\co_{leaf}=\co_{leaf,m_2}$. This finishes our sorting of leaves.

\subsection{The quantitative two-ends reductions}

We will realize the reductions by defining a new relation $\sim_{\bn}$ (the relation $\sim$ mentioned in Section \ref{subsection-broom} is recognized as the old relation) between the $R$-tubes $T$ and the $R^{1-\e_0}$-balls $B_k$ (recall $\e_0=\e^{10}$, and see Section \ref{subsection-broom} for $B_k$). Recall from the previous subsection (see also Lemma \ref{algebraic-lem-1}) that there are $n$ collections of fat surfaces $\{\cs_t\}_{1\leq t\leq n}$, each of which corresponds to a scale $r_t:=R_{j_t}$, and there are two natural numbers $1\leq m_1\leq m_2$ that $r_{m_1}\geq R^{2/3}$, $r_{m_2}\geq R^{1/2}$ while $r_{m_1+1}\leq R^{2/3}$, $r_{m_2}\leq R^{1/2}$.

\subsubsection{First type of related tubes}
\label{first-related-tubes}
Suppose at first $1\leq t\leq m_1$. For each $R^{1-\e_0}$-ball $B_{k}$, let us define a collection of related tubes, which is a subset of $\ZT_{\la_1}[R]$ (see \eqref{la-1} for $\ZT_{\la_1}[R]$). To do so, we would like to define a shading $Y(T)\subset T$ for each $T\in\ZT_{\la_1}[R]$. Recall that in the first sorting (Section \ref{first-sorting}) there is a collection of $R^{1/2}$-balls $\cq_{\la_3}=\cq_{\la_3}(t)$.

\begin{definition}
\label{shading-def-1}
For each $R$-tube $T\in\ZT_{\la_1}[R]$, define a shading $Y(T)$ as the union of all the $R^{1/2}$-balls $Q\subset T$ that $Q\in\cq_{\la_3}$. Namely, 
\begin{equation}
\label{shading-1}
    Y(T)=\bigcup_{Q\in\cq_{\la_3}}T\cap Q.
\end{equation}
\end{definition}
For each $T\in\ZT_{\la_1}[R]$, partition it into $R^{\e_0}$ many $R^{1-\e_0}$-segments $\{T_j\}$. We sort the segments according to $|Y(T_j)|$. That is, let $J_\al^t=J_\al^t(T)$ be the collection of $j$ such that $|Y(T_j)|\sim \al$. Define a new shading ($Y_\al$ implicitly depends on $t$)
\begin{equation}
\label{new-shading-1}
    Y_\al(T)=\bigcup_{j\in J_\al^t}Y(T_j).
\end{equation}
For each $R^{1-\e_0}$-ball $B_{k}$, define $\ZT_{k}=\{T\in\ZT_{\la_1}[R]: Y(T)\cap B_{k}\not=\varnothing\}$. Then we can partition $\ZT_k$ as
\begin{equation}
\label{alpha-lambda-4}
    \ZT_{k}=\bigsqcup_{\al,\la}\ZT_{k,\al,\la}(t)
\end{equation}
so that for all $T\in\ZT_{k,\al,\la}$
\begin{equation}
\label{la-4-t}
    |Y_\al(T)|\sim \la|T|,
\end{equation}
and hence $|J_\al^t(T)|$ are about the same up to a constant multiple. We remark that for   $Q\in\cq_{\la_3}$, $Q\cap Y_\al(T)=Q\cap T$ when $T\in\ZT_{k,\al,\la}$ and $Q\subset B_{k}$

Recall $\e_0'=\e^{15}$. We will distinguish two cases: $|J_\al^t(T)|\leq R^{\e_0'}$ and $|J_\al^t(T)|\geq R^{\e_0'}$. For each $R^{1-\e_0}$-ball $B_{k}$, define the collection of related tubes and non-related tubes ($\sim_{\bn}$ stands for ``new relation")
\begin{equation}
\label{scale-t-related}
    \ZT^{\sim_{\bn}}_{k}(t)=\{T\in\ZT_{k}:|J_\al^t(T)|\leq R^{\e_0'} \}, \hspace{.5cm}\ZT^{\not\sim_{\bn}}_{k}(t)=\ZT_{k}\setminus\ZT^{\sim_{\bn}}_{k}(t).
\end{equation}
Hence any related tube $T\in\bigcup_{k}\ZT^{\sim_{\bn}}_{k}(t)$ belongs to $\lesssim R^{\e_0'}$ sets $\{\ZT^{\sim_{\bn}}_{k}(t)\}_k$, and thus is related to $\lesssim R^{\e_0'}$ balls $B_{k}$. Recall \eqref{alpha-lambda-4} and notice that for a fixed pair $(\al,\la)$, either $\ZT_{k,\al,\la}(t)\subset\ZT^{\sim}_{k}(t)$ or $\ZT_{k,\al,\la}(t)\subset\ZT^{\not\sim_{\bn}}_{k}(t)$. Define 
\begin{equation}
\label{non-al-la-1}
    \ZT^{\not\sim_{\bn}}_{k,\al,\la}(t)=\ZT_{k,\al,\la}(t)\cap \ZT^{\not\sim_{\bn}}_{k}(t).
\end{equation}
Since $|J_\al^t(T)|\geq R^{\e_0'}$ for any $T\in\bigcup_{k}\ZT^{\not\sim_{\bn}}_{k,\al,\la}(t)$, we have that $Y_\al(T)$ is quantitative two-ends at scale $\e_0'$ (see Definition \ref{two-ends-def}). What follows is a useful incidence estimate among the $R^{1/2}$-balls in $\cq_{\la_3}$ and the $R$-tubes in $\ZT^{\not\sim_{\bn}}_{k,\al,\la}(t)$.

\begin{lemma}
\label{kakeya-type-lem-1}
Fix a pair $(\al,\la)$. Denote by $\nu(Q)$ the number of shading $\{Y_\al(T):T\in\bigcup_{k}\ZT^{\not\sim_{\bn}}_{k,\al,\la}\}$ intersects $Q$. Then, recalling $|Y_\al(T)|\sim \la|T|$ and $\Theta_{\la_1}[R]$ near \eqref{la-1}, we clearly have
\begin{equation}
\label{l1-estimate}
    \sum_{Q\in\cq_{\la_3}}R^{3/2}\nu(Q)=\sum_{T\in\bigcup_{k}\ZT^{\not\sim_{\bn}}_{k,\al,\la}}|Y_{\al}(T)|\lesssim\la\la_1R^{2}|\Theta_{\la_1}[R]|.
\end{equation}
More importantly, there is a subset $\cq_{\la_3,\la}\subset\cq_{\la_3}$ with
\begin{equation}
\label{cq-la3-la4-t}
    |\cq_{\la_3}\setminus\cq_{\la_3,\la}|\lessapprox R^{-\Om(\de)}|\cq_{\la_3}|
\end{equation}
so that $\nu(Q)\lesssim R^{O(\de)}R^{1/4}\la^{-1}\la_1$ whenever $Q\in\cq_{\la_3,\la}$. 
\end{lemma}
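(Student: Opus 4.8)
The identity and bound \eqref{l1-estimate} are immediate from counting: $|Y_\al(T)|\sim\la|T|\sim\la R^2$ because $Y_\al(T)$ is a density-$\la$ shading of $T$, $Y_\al(T)$ is a union of $R^{1/2}$-balls of the finitely overlapping family $\cq_{\la_3}$ (so by Fubini $\sum_T|Y_\al(T)|\sim R^{3/2}\sum_{Q\in\cq_{\la_3}}\nu(Q)$), and $\bigcup_k\ZT^{\not\sim_{\bn}}_{k,\al,\la}\subset\ZT_{\la_1}[R]$ has $\lesssim\la_1|\Theta_{\la_1}[R]|$ members.

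For the refined estimate the plan is to rescale $B_R$ to the unit ball and apply the refined hairbrush estimate of Lemma \ref{two-ends-hairbrush} with $\de=R^{-1/2}$. Set $\cT:=\bigcup_k\ZT^{\not\sim_{\bn}}_{k,\al,\la}$ and $A:=\bigcup_{T\in\cT}Y_\al(T)$. The tubes of $\cT$ point in the $R^{-1/2}$-separated directions of $\Theta_{\la_1}[R]$, there are $\lesssim\la_1$ parallel ones per direction, and each carries the shading $Y_\al(T)$ of uniform density $\la$; moreover, the nonempty $R^{1-\e_0}$-segments of $Y_\al(T)$ all have comparable measure (by the definition of $Y_\al$) and there are at least $R^{\e_0'}$ of them (this is precisely what distinguishes $\ZT^{\not\sim_{\bn}}_{k,\al,\la}$), so after dilation by $R^{-1}$ the shading is quantitative two-ends at scale $\e'=2\e_0'$ in the sense of Definition \ref{two-ends-def}, the admissibility $\e>\e'>\e^{100}$ being guaranteed by $\e_0=\e^{10}$, $\e_0'=\e^{15}$. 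Lemma \ref{two-ends-hairbrush} then provides a dyadic $\mu\ge1$ and $A_\mu\subset A$ with $\mu\la\lessapprox\de^{-1/2}\la_1=R^{1/4}\la_1$, hence $\mu\lessapprox R^{1/4}\la^{-1}\la_1$, and with $|A_{\mu'}|\lessapprox\mu|A_\mu|/\mu'$ for every dyadic $\mu'$, where $A_{\mu'}\subset A$ is the set on which the multiplicity of $\{Y_\al(T)\}_{T\in\cT}$ is $\sim\mu'$.

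The step that turns this measure estimate into the claimed cardinality estimate is the inclusion $A\subset\bigcup_{Q\in\cq_{\la_3}}Q$ (each $Y_\al(T)$ is a union of balls of $\cq_{\la_3}$), which by finite overlap gives $|A_\mu|\le|A|\lesssim R^{3/2}|\cq_{\la_3}|$. Fix a large constant $C$, put $\mu_0:=R^{C\de}\mu$, and sum $|A_{\mu'}|\lessapprox\mu|A_\mu|/\mu'$ over dyadic $\mu'\ge\mu_0$: the part of $A$ on which the multiplicity exceeds $\mu_0$ has measure $\lessapprox\mu|A_\mu|/\mu_0=R^{-C\de}|A_\mu|\le R^{3/2-C\de}|\cq_{\la_3}|$. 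Since every $Y_\al(T)$ is a union of $\cq_{\la_3}$-balls, a routine $O(1)$-overlap argument shows that any $Q\in\cq_{\la_3}$ with $\nu(Q)>\mu_0$ is, up to the bounded overlap of $\cq_{\la_3}$, contained in the set where the multiplicity is $\gtrsim\mu_0$; hence $R^{3/2}|\{Q\in\cq_{\la_3}:\nu(Q)>\mu_0\}|\lessapprox R^{3/2-C\de}|\cq_{\la_3}|$, i.e. $|\{Q\in\cq_{\la_3}:\nu(Q)>\mu_0\}|\lessapprox R^{-C\de}|\cq_{\la_3}|$. Taking $\cq_{\la_3,\la}:=\{Q\in\cq_{\la_3}:\nu(Q)\le\mu_0\}$ and $C$ large enough to absorb the accumulated $\lessapprox$-losses into a genuine power gain yields \eqref{cq-la3-la4-t}, while on $\cq_{\la_3,\la}$ one has $\nu(Q)\le\mu_0=R^{C\de}\mu\lessapprox R^{O(\de)}R^{1/4}\la^{-1}\la_1$.

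Granting Lemma \ref{two-ends-hairbrush}, this argument has no substantial obstacle: it is dyadic pigeonholing over the multiplicity together with the bookkeeping of the $O(1)$-overlap of $\cq_{\la_3}$ and the inclusion $A\subset\bigcup\cq_{\la_3}$. The only point demanding care is checking that the rescaled shadings genuinely satisfy Definition \ref{two-ends-def}, in particular that the two-ends exponent $\e'=2\e_0'$ lies in the admissible window $(\e^{100},\e)$ — which is exactly why the scale $\e_0'=\e^{15}$ appears in the definition of $\ZT^{\not\sim_{\bn}}_{k,\al,\la}$ and why the pruning was set up so that $|Y_\al(T)|$ is uniform along the nonempty segments. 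It is the improvement from Wolff's $\la^{-3/2}$ to the refined $\la^{-1}$ in Lemma \ref{two-ends-hairbrush} that is responsible for the exponent $R^{1/4}$ here, in place of the $R^{1/2}$ that the polynomial Wolff axiom alone would give.
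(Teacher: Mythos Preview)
Your proof is correct and follows essentially the same route as the paper's: rescale to the unit ball, apply Lemma~\ref{two-ends-hairbrush} with $m=\la_1$ to obtain the dominant multiplicity $\mu\lessapprox R^{1/4}\la^{-1}\la_1$ together with the tail bound $|A_{\mu'}|\lessapprox\mu|A_\mu|/\mu'$, then sum over $\mu'\geq R^{C\de}\mu$ and use $A\subset\bigcup_{\cq_{\la_3}}Q$ to convert the measure estimate into a cardinality estimate on bad cubes. One small slip: when you check the admissibility window for Definition~\ref{two-ends-def}, the relevant inequality is $\epsilon>\epsilon'>\epsilon^{100}$ with $\epsilon=2\e_0$ (the segmentation parameter after rescaling), not the ambient $\e$ of the paper; the conclusion is still valid since $2\e_0>2\e_0'>(2\e_0)^{100}$ for small $\e$.
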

\begin{proof}
Note that by \eqref{la-1}, $ \bigcup_{k}\ZT^{\not\sim_{\bn}}_{k,\al,\la}$ contains $\lesssim\la_1$ parallel tubes. After rescaling, apply Lemma \ref{two-ends-hairbrush} with $\cT= \bigcup_{k}\ZT^{\not\sim_{\bn}}_{k,\al,\la}$, $m=\la_1$, $Y=Y_\al$, and $A=\bigcup_{T\in\cT}Y_\al(T)$ (which is a subset of $\bigcup_{Q\in\cq_{\la_3}}Q$). Thus, there is a dyadic number
\begin{equation}
    \mu\lessapprox R^{1/4}\la^{-1}\la_1
\end{equation}
and a set $A_\mu\subset A$, such that for any other dyadic number $\mu'$, the set $A_{\mu'}$ satisfies $|A_{\mu'}|\lesssim\mu|A_{\mu}|/\mu'$. Summing up all dyadic $\mu'\geq R^{O(\de)}\mu$ one has
\begin{equation}
\label{kakeya-implication-1-1}
    \Big|\bigcup_{\mu'\gtrapprox  R^{O(\de)}\mu} A_{\mu'}\Big|\lessapprox R^{-\Om(\de)}|A_\mu|\lessapprox R^{-\Om(\de)}|A|.
\end{equation}
Since $\bigcup_{\mu'\gtrapprox  R^{O(\de)}\mu}A_{\mu'}$ is a subset of $\bigcup_{Q\in\cq_{\la_3}}Q$, there is a subset $\cq_{\la_3,\la}\subset\cq_{\la_3}$ with
\begin{equation}
    |\cq_{\la_3}\setminus\cq_{\la_3,\la}|\lessapprox R^{-\Om(\de)}|\cq_{\la_3}|,
\end{equation}
so that each $Q\in\cq_{\la_3,\la}$ intersects $\nu(Q)\lesssim R^{O(\de)}R^{1/4}\la^{-1}\la_1$ shadings $Y_\al(T)$.
\end{proof}

Since for each $R^{1-\e_0}$-ball $B_{k}$ and each $T\in\ZT^{\not\sim_{\bn}}_{k,\al,\la}$ we have for $Q\in\cq_{\la_3}$, $Q\cap Y_\al(T)=Q\cap T$ when $Q\subset B_{k}$. A direct corollary of Lemma \ref{kakeya-type-lem-1} is the following.
\begin{corollary}
\label{corollary-kakeya-1}
Fix a pair $(\al,\la)$. For each $R^{1-\e_0}$-ball $B_{k}$, let $\cq_{\la_3}^{k}=\{Q\in\cq_{\la_3}:Q\cap B_{k}\not=\varnothing\}$. Suppose $|\cq_{\la_3}^{k}|\geq R^{-O(\de^2)} |\cq_{\la_3}|$. Then there is a subset $\cq_{\la_3,\la}^{k}\subset\cq_{\la_3}^{k}\cap\cq_{\la_3,\la}$ with
\begin{equation}
\label{cq-la3-la4-t-cor}
    |\cq_{\la_3}^{k}\setminus\cq_{\la_3,\la}^{k}|\lessapprox R^{-\Om(\de)}|\cq_{\la_3}^{k}|
\end{equation}
so that each $Q\in\cq_{\la_3,\la}^{k}$ intersects $\lesssim R^{O(\de)}R^{1/4}\la^{-1}\la_1$ tubes $T\in\ZT^{\not\sim_{\bn}}_{k,\al,\la}$.
\end{corollary}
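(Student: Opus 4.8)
The plan is to deduce Corollary~\ref{corollary-kakeya-1} directly from Lemma~\ref{kakeya-type-lem-1}, together with the localization remark recorded just before that lemma, namely that for $Q\in\cq_{\la_3}$ with $Q\subset B_{k}$ and $T\in\ZT^{\not\sim_{\bn}}_{k,\al,\la}$ one has $Q\cap Y_\al(T)=Q\cap T$. There is no new geometric input: the corollary is a localization-plus-pigeonholing consequence of the hairbrush bound for $\nu(Q)$.

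First I would take $\cq^{k}_{\la_3,\la}:=\cq^{k}_{\la_3}\cap\cq_{\la_3,\la}$, with $\cq_{\la_3,\la}$ the good set produced by Lemma~\ref{kakeya-type-lem-1}. To check that its complement inside $\cq^{k}_{\la_3}$ is small, note $\cq^{k}_{\la_3}\setminus\cq^{k}_{\la_3,\la}\subset\cq_{\la_3}\setminus\cq_{\la_3,\la}$, so by \eqref{cq-la3-la4-t} and the hypothesis $|\cq^{k}_{\la_3}|\geq R^{-O(\de^2)}|\cq_{\la_3}|$,
\[
    |\cq^{k}_{\la_3}\setminus\cq^{k}_{\la_3,\la}|\leq|\cq_{\la_3}\setminus\cq_{\la_3,\la}|\lessapprox R^{-\Om(\de)}|\cq_{\la_3}|\lessapprox R^{-\Om(\de)+O(\de^2)}|\cq^{k}_{\la_3}|.
\]
Since $\de=\e^{3}$ is small, $-\Om(\de)+O(\de^2)=-\Om(\de)$ (the implicit constant merely shrinks), which is \eqref{cq-la3-la4-t-cor}.

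Next, fix $Q\in\cq^{k}_{\la_3,\la}$. As $Q$ is an $R^{1/2}$-ball meeting the $R^{1-\e_0}$-ball $B_{k}$ and $R^{1/2}\ll R^{1-\e_0}$, $Q$ lies inside (a harmless dilate of) $B_{k}$, so the localization remark gives $Q\cap T=Q\cap Y_\al(T)$ for every $T\in\ZT^{\not\sim_{\bn}}_{k,\al,\la}$; in particular $T\cap Q\neq\varnothing$ if and only if $Y_\al(T)\cap Q\neq\varnothing$. Since $\ZT^{\not\sim_{\bn}}_{k,\al,\la}\subset\bigcup_{k'}\ZT^{\not\sim_{\bn}}_{k',\al,\la}$ and $\nu(Q)$ counts the tubes in the latter family whose shading meets $Q$ (consistent with \eqref{l1-estimate}),
\[
    \big|\{T\in\ZT^{\not\sim_{\bn}}_{k,\al,\la}:T\cap Q\neq\varnothing\}\big|=\big|\{T\in\ZT^{\not\sim_{\bn}}_{k,\al,\la}:Y_\al(T)\cap Q\neq\varnothing\}\big|\leq\nu(Q).
\]
Because $Q\in\cq_{\la_3,\la}$, Lemma~\ref{kakeya-type-lem-1} bounds $\nu(Q)\lesssim R^{O(\de)}R^{1/4}\la^{-1}\la_1$, which is the asserted estimate.

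I do not expect any real obstacle in the corollary itself---it is bookkeeping. All the substance sits in Lemma~\ref{kakeya-type-lem-1}, which converts the two-ends property $|J_\al^t(T)|\geq R^{\e_0'}$ of the shadings $Y_\al(T)$ into the refined hairbrush bound of Lemma~\ref{two-ends-hairbrush}, upgrading Wolff's $\la^{-3/2}$ to $\la^{-1}$. The one point deserving a moment's care is the very first step: one must confirm that restricting attention from $\cq_{\la_3}$ to the smaller set $\cq^{k}_{\la_3}$ does not spoil the $R^{-\Om(\de)}$-smallness of the exceptional family, which holds precisely because the loss $R^{O(\de^2)}$ coming from the hypothesis is of strictly lower order in $\e$ than the gain $R^{-\Om(\de)}$.
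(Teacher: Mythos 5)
Your proof is correct and is precisely the bookkeeping the paper implicitly performs when it calls the statement ``a direct corollary of Lemma~\ref{kakeya-type-lem-1}''; it does not introduce a different route. The three steps you isolate are exactly the ones needed: take $\cq_{\la_3,\la}^k = \cq_{\la_3}^k \cap \cq_{\la_3,\la}$, use the hypothesis $|\cq_{\la_3}^k|\geq R^{-O(\de^2)}|\cq_{\la_3}|$ to absorb the exceptional set bound \eqref{cq-la3-la4-t} from $\cq_{\la_3}$ down to $\cq_{\la_3}^k$ (the loss $R^{O(\de^2)}$ being of strictly lower order than the gain $R^{-\Om(\de)}$ for $\de = \e^3$ small), and invoke the localization remark ($Q\cap Y_\al(T) = Q\cap T$ for $T\in\ZT^{\not\sim_{\bn}}_{k,\al,\la}$ and $Q\subset B_k$) together with $\ZT^{\not\sim_{\bn}}_{k,\al,\la}\subset\bigcup_{k'}\ZT^{\not\sim_{\bn}}_{k',\al,\la}$ to bound the tube count by $\nu(Q)$. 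The only point deserving a sentence in a careful writeup, which you did flag, is the distinction between $Q\cap B_k\neq\varnothing$ (the definition of $\cq_{\la_3}^k$) and $Q\subset B_k$ (the hypothesis of the localization remark); since $R^{1/2}\ll R^{1-\e_0}$ this is handled by replacing $B_k$ with a fixed dilate, or by discarding the negligible fraction of boundary cubes, neither of which affects the stated bounds.
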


\subsubsection{Second type of related tubes}
\label{second-related-tubes}
Suppose $m_1+1\leq t\leq m_2$. In this case, we focus on those $R/r_t^{1/2}$-balls in $\cb(t)$ (see Section \ref{second-sorting}, the second sorting of leaves) and those fat tubes $\wt T\in\wt\cT(t)$, where $\wt \cT(t)$ is a collection of $R/r_t^{1/2}\times R/r_t^{1/2}\times R$-tubes that each $\wt T\in\wt\cT(t)$ contains at least one $R$-tube $T$ that $(f_{\rho_l})_T\not=0$ (see Section \ref{wp-pruning-section} for the step-$\rho_l$ function $f_{\rho_l}$), and that any two fat tubes in $\wt\cT(t)$ either are parallel, or make an angle $\gtrsim r_t^{-1/2}$. Let $\rho_j$ be that $\rho_j\geq R/r_t^{1/2}\geq \rho_{j-1}$ (see \eqref{scales-1} for $\rho_j$), so there are at most $R^{O(\de)}\ka_2(j)$ parallel tubes in $\wt\cT(t)$ (see above \eqref{l2-estimate-pruning}).

Similar to Definition \ref{shading-def-1}, we define a shading $Y(\wt T)\subset \wt T$ on each fat tube $\wt T\in\wt\cT(t)$ via the $R/r^{1/2}$-balls in $\cb(t)$.

\begin{definition}
\label{shading-def-2}
For each fat tube $\wt T\in\wt\cT(t)$, define a shading $Y(\wt T)$ as the union of all the $R/r_t^{1/2}$-balls in $\cb(t)$ that also intersect $\wt T$. Namely, 
\begin{equation}
\label{shading-2}
    Y(\wt T)=\bigcup_{B\in\cb(t)}\wt T\cap B.
\end{equation}
\end{definition}

Recall $\e_0=\e^{10}$ and $\e_0'=\e^{15}$. For each $\wt T\in\wt\cT(t)$, we similarly partition it into $R^{\e_0}$ many $R^{1-\e_0}$-segments $\{\wt T_j\}$. We sort the segments according to $|Y(\wt T_j)|$: Let $J_\al^t$ be the collection of $j$ such that $|Y(\wt T_j)|\sim \al$. Define a new shading
\begin{equation}
\label{new-shading-2}
    Y_\al(\wt T)=\bigcup_{j\in J_\al^t}Y(\wt T_j).
\end{equation}
For each $R^{1-\e_0}$-ball $B_{k}$, define $\wt\cT_{k}(t)=\{\wt T\in\wt\cT(t): Y(\wt T)\cap B_{k}\not=\varnothing\}$. Then we can partition it as
\begin{equation}
\label{alpha-lambda}
    \wt\cT_{k}(t)=\bigsqcup_{\al,\la}\wt\cT_{k,\al,\la}(t)
\end{equation}
so that for all $\wt T\in\wt\cT_{k,\al,\la}(t)$
\begin{equation}
\label{la-t}
    |Y_\al(\wt T)|\sim \la|\wt T|,
\end{equation}
and hence the mass $|J_\al^t(\wt T)|$ are about the same up to a constant multiple. Note that for $B\in\cb(t)$, $B\cap Y_\al(\wt T)=B\cap \wt T$ when $\wt T\in\wt\cT_{k}(t)$ and $B\subset B_{k}$

We similarly distinguish two cases: $|J_\al^t(\wt T)|\leq R^{\e_0'}$ and $|J_\al^t(\wt T)|\geq R^{\e_0'}$. For each $R^{1-\e_0}$-ball $B_{k}$, define the collection of related tubes and non-related tubes
\begin{equation}
\label{scale-t-related-fat}
    \wt\cT^{\sim_{\bn}}_{k}(t)=\{\wt T\in\wt\cT:|J_\al^t(\wt T)|\leq R^{\e_0'}\}, \hspace{.5cm}\wt\cT^{\not\sim_{\bn}}_{k}(t)=\wt\cT_{k}(t)\setminus\wt\cT^{\sim_{\bn}}_{k}(t).
\end{equation}
Hence any related tube $\wt T\in\wt\cT^{\sim_{\bn}}(t)$ is related to $\lesssim R^{\e_0'}$ balls $B_{k}$. Now for each $B_{k}$ define a collection of related $R$-tubes 
\begin{equation}
\label{scale-t-related-2}
    \ZT^{\sim_{\bn}}_{k}(t)=\{T\in\ZT_{\la_1}[R]:T\subset\wt T\text{ for some }\wt T\in\wt\cT^{\sim_{\bn}}_{k}(t)\}
\end{equation}
as well as a collection of non-related $R$-tubes
\begin{equation}
\label{scale-t-nonrelated-2}
    \ZT^{\not\sim_{\bn}}_{k}(t)=\{T\in\ZT_{\la_1}[R]:T\subset\wt T\text{ for some }\wt T\in\wt\cT^{\not\sim_{\bn}}_{k}(t)\}. 
\end{equation}

Recall \eqref{alpha-lambda}. Fix a pair $(\al,\la)$, define
\begin{equation}
\label{non-al-la-2}
    \wt\cT^{\not\sim_{\bn}}_{k,\al,\la}(t)=\wt\cT_{k,\al,\la}(t)\cap \wt\cT^{\not\sim_{\bn}}_{k}(t)
\end{equation}
as well as
\begin{equation}
\label{non-al-la-3}
    \ZT^{\not\sim_{\bn}}_{k,\al,\la}(t)=\{T\in\ZT_{\la_1}[R]:T\subset\wt T\text{ for some }\wt T\in\wt\cT^{\not\sim_{\bn}}_{k,\al,\la}(t)\}.
\end{equation}
Note that for each fat tube $\wt T\in\bigcup_{k}\wt\cT^{\not\sim_{\bn}}_{k,\al,\la}(t)$, by definition $|J_\al^t(\wt T)|\geq R^{\e_0'}$ and $|Y_\al(\wt T)|\sim \la|\wt T|$. What follows is an incidence estimate among the $R/r_t^{1/2}$-balls in $\cb(t)$ and the fat tubes in $\bigcup_{k}\wt\cT^{\not\sim_{\bn}}_{k,\al,\la}(t)$.

\begin{lemma}
\label{kakeya-type-lem-2}
Fix a pair $(\al,\la)$. There is a subset $\cb_\la(t)\subset\cb(t)$ with
\begin{equation}
\label{cb-m-al-la}
    |\cb(t)\setminus\cb_\la(t)|\lessapprox R^{-\Om(\de)}|\cb(t)|
\end{equation}
so that each $R/r_t^{1/2}$-ball $B\in\cb_\la(t)$ intersects $\lesssim R^{O(\de)}r_t^{1/4}\la^{-1}\ka_2(j)$ shadings $Y_\al(\wt T)$ with $\wt T\in\bigcup_{k}\wt\cT^{\not\sim_{\bn}}_{k,\al,\la}(t)$. 
\end{lemma}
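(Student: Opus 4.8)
The plan is to run, in close parallel to the proof of Lemma~\ref{kakeya-type-lem-1}, a rescaled application of the refined hairbrush estimate Lemma~\ref{two-ends-hairbrush}, with the role played there by the $R$-tubes and $R^{1/2}$-balls now played by the fat tubes $\wt T\in\wt\cT(t)$ of dimensions $R/r_t^{1/2}\times R/r_t^{1/2}\times R$ and by the $R/r_t^{1/2}$-balls of $\cb(t)$. First I would dilate $B_R$ to the unit ball by a factor $R^{-1}$. Each $\wt T$ then becomes a $\de$-tube with $\de=r_t^{-1/2}$, each $B\in\cb(t)$ becomes a $\de$-ball, and each $R^{1-\e_0}$-segment $\wt T_j$ becomes a segment of length $R^{-\e_0}$, which is a fixed small power of $\de$ because $R^{1/2}\le r_t\le R$. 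The separation hypothesis on $\wt\cT(t)$---any two of its tubes are parallel or make an angle $\gtrsim r_t^{-1/2}$---becomes the statement that the tube directions are $\de$-separated, and the pruning of Section~\ref{wp-pruning-section}, applied with the scale $\rho_j$ satisfying $\rho_j\ge R/r_t^{1/2}\ge\rho_{j-1}$ (as recorded above \eqref{l2-estimate-pruning}), guarantees $\lesssim R^{O(\de)}\ka_2(j)$ parallel tubes in $\wt\cT(t)$, hence in the subcollection $\bigcup_k\wt\cT^{\not\sim_{\bn}}_{k,\al,\la}(t)$.

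Next I would verify the hypotheses of Lemma~\ref{two-ends-hairbrush} for $\cT=\bigcup_k\wt\cT^{\not\sim_{\bn}}_{k,\al,\la}(t)$, $m=R^{O(\de)}\ka_2(j)$, shading $Y=Y_\al$, and $A=\bigcup_{\wt T}Y_\al(\wt T)$, which is contained in $\bigcup_{B\in\cb(t)}B$. The density assumption $|Y_\al(\wt T)|\sim\la|\wt T|$ is exactly \eqref{la-t}; the two-ends assumption holds because every $\wt T$ in this collection is non-related, so $|J_\al^t(\wt T)|\ge R^{\e_0'}$, and by the definition \eqref{new-shading-2} of $Y_\al$ the nonempty segments of this shading on $\wt T$ are precisely those indexed by $J_\al^t(\wt T)$, there are $\ge R^{\e_0'}$ of them, and on each the shading has measure $|Y(\wt T_j)|\sim\al$, so both items of Definition~\ref{two-ends-def} hold (the comparison between the segmentation scale and the two-ends scale required there follows from $\e_0=\e^{10}$, $\e_0'=\e^{15}$). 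Lemma~\ref{two-ends-hairbrush} then produces a dyadic $\mu$ with $\mu\la\lessapprox\de^{-1/2}m$, i.e. $\mu\lessapprox R^{O(\de)}r_t^{1/4}\la^{-1}\ka_2(j)$, together with a subset $A_\mu\subset A$ satisfying $|A_{\mu'}|\lessapprox\mu|A_\mu|/\mu'$ for every dyadic $\mu'$.

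Finally I would sum over all dyadic $\mu'\gtrapprox R^{O(\de)}\mu$, as in \eqref{kakeya-implication-1-1}, to get that the exceptional set $\bigcup_{\mu'\gtrapprox R^{O(\de)}\mu}A_{\mu'}$ has measure $\lessapprox R^{-\Om(\de)}|A_\mu|\lessapprox R^{-\Om(\de)}|A|$; since it lies inside $\bigcup_{B\in\cb(t)}B$ and the balls of $\cb(t)$ are finitely overlapping, it meets at most a $\lessapprox R^{-\Om(\de)}$-fraction of them, and discarding those balls produces the subset $\cb_\la(t)\subset\cb(t)$ with $|\cb(t)\setminus\cb_\la(t)|\lessapprox R^{-\Om(\de)}|\cb(t)|$, while by construction each remaining $B\in\cb_\la(t)$ is met by $\lesssim R^{O(\de)}r_t^{1/4}\la^{-1}\ka_2(j)$ shadings $Y_\al(\wt T)$. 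The only point requiring genuine care is the rescaling bookkeeping---checking that the $\de$-separation of directions and the two-ends structure really survive the dilation, and tracking the $R^{O(\de)}$ losses from the pruning---but this is routine since the argument is essentially a verbatim transcription of Lemma~\ref{kakeya-type-lem-1}; the substantive input, namely the improved exponent $\la^{-1}$ in place of $\la^{-3/2}$, is already packaged inside Lemma~\ref{two-ends-hairbrush}.
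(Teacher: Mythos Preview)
Your proposal is correct and follows essentially the same approach as the paper: the paper itself states that the proof of Lemma~\ref{kakeya-type-lem-2} is similar to that of Lemma~\ref{kakeya-type-lem-1}, the only additional observation needed being that $\bigcup_k\wt\cT^{\not\sim_{\bn}}_{k,\al,\la}(t)$ contains $\lesssim R^{O(\de)}\ka_2(j)$ parallel fat tubes, and then omits the details. You have faithfully supplied exactly those details---the rescaling, the verification of the two-ends and density hypotheses for Lemma~\ref{two-ends-hairbrush}, and the passage from the multiplicity bound to the exceptional-set estimate---in the manner the paper intends.
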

The proof of Lemma \ref{kakeya-type-lem-2} is similar to the one of Lemma \ref{kakeya-type-lem-1}. One just needs to notice that $\bigcup_{k}\wt\cT^{\not\sim_{\bn}}_{k,\al,\la}(t)$ contains $\lesssim R^{O(\de)}\ka_2(j)$ parallel fat tubes (see the beginning of Section \ref{second-related-tubes}). We omit details. 

Similar to Corollary \ref{corollary-kakeya-1}, we have a corollary of Lemma \ref{kakeya-type-lem-2}.

\begin{corollary}
\label{corollary-kakeya-2}
Fix a pair $(\al,\la)$. For each $R^{1-\e_0}$-ball $B_{k}$, let $\cb^{k}(t)=\{B\in\cb(t):B\cap B_{k}\not=\varnothing\}$. Suppose $|\cb^{k}(t)|\geq R^{-O(\de^2)} |\cb(t)|$. Then there is a subset $\cb_\la^{k}(t)\subset\cb^{k}(t)\cap\cb_\la(t)$ with
\begin{equation}
\label{cb-m-al-la-cor}
    |\cb^{k}(t)\setminus\cb_\la^{k}(t)|\lessapprox R^{-\Om(\de)}|\cb^{k}(t)|
\end{equation}
so that each $R/r_t^{1/2}$-ball $B\in\cb_\la(t)$ intersects $\lesssim R^{O(\de)}r_t^{1/4}\la^{-1}\ka_2(j)$ fat tubes $\wt T\in\wt\cT^{\not\sim_{\bn}}_{k,\al,\la}(t)$ (see Section \ref{wp-pruning-section} for $\ka_2(j)$). Also, since $Y_\al(\wt T)\cap B_{k}=\wt T\cap B_{k}$, every $\wt T\in\wt\cT^{\not\sim_{\bn}}_{k,\al,\la}(t)$ intersects $\lesssim \la r_t^{1/2}$ balls in $\cb_\la^{k}(t)$.
\end{corollary}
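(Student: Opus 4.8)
The plan is to deduce Corollary~\ref{corollary-kakeya-2} from Lemma~\ref{kakeya-type-lem-2} by localizing the global incidence bound to $B_k$, just as Corollary~\ref{corollary-kakeya-1} is obtained from Lemma~\ref{kakeya-type-lem-1}. Let $\cb_\la(t)$ be the collection furnished by Lemma~\ref{kakeya-type-lem-2} and set $\cb_\la^k(t):=\cb^k(t)\cap\cb_\la(t)$. The measure bound \eqref{cb-m-al-la-cor} is then formal: since $\cb^k(t)\setminus\cb_\la^k(t)=\cb^k(t)\cap(\cb(t)\setminus\cb_\la(t))\subset\cb(t)\setminus\cb_\la(t)$, \eqref{cb-m-al-la} gives $|\cb^k(t)\setminus\cb_\la^k(t)|\leq|\cb(t)\setminus\cb_\la(t)|\lessapprox R^{-\Om(\de)}|\cb(t)|$, and combining this with the hypothesis $|\cb^k(t)|\geq R^{-O(\de^2)}|\cb(t)|$ turns it into $|\cb^k(t)\setminus\cb_\la^k(t)|\lessapprox R^{-\Om(\de)}|\cb^k(t)|$, the factor $R^{O(\de^2)}$ being absorbed since $\de=\e^3\gg\de^2$.

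Next I would prove the tube-incidence bound. Fix $B\in\cb_\la^k(t)$. By Lemma~\ref{kakeya-type-lem-2}, $B$ meets at most $\lesssim R^{O(\de)}r_t^{1/4}\la^{-1}\ka_2(j)$ of the shadings $\{Y_\al(\wt T):\wt T\in\bigcup_{k'}\wt\cT^{\not\sim_{\bn}}_{k',\al,\la}(t)\}$, hence a fortiori at most that many of the shadings coming from the single family $\wt\cT^{\not\sim_{\bn}}_{k,\al,\la}(t)$. It remains to replace ``$B$ meets $Y_\al(\wt T)$'' by ``$B$ meets $\wt T$''. Since $B\in\cb^k(t)$, the ball $B$ lies in a bounded dilate of $B_k$; and since $\wt\cT^{\not\sim_{\bn}}_{k,\al,\la}(t)\subset\wt\cT_{k,\al,\la}(t)\subset\wt\cT_k(t)$, the identity recorded right after \eqref{la-t} gives $B\cap Y_\al(\wt T)=B\cap\wt T$ for every such $\wt T$. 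Thus the two incidence counts agree, and each $B\in\cb_\la^k(t)$ meets $\lesssim R^{O(\de)}r_t^{1/4}\la^{-1}\ka_2(j)$ fat tubes $\wt T\in\wt\cT^{\not\sim_{\bn}}_{k,\al,\la}(t)$.

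For the reverse bound, fix $\wt T\in\wt\cT^{\not\sim_{\bn}}_{k,\al,\la}(t)\subset\wt\cT_{k,\al,\la}(t)$, so by \eqref{la-t}, $|Y_\al(\wt T)|\sim\la|\wt T|\sim\la(R/r_t^{1/2})^2R$. Using $Y_\al(\wt T)\cap B_k=\wt T\cap B_k$ we get $|\wt T\cap B_k|\leq|Y_\al(\wt T)|\lesssim\la(R/r_t^{1/2})^2R$; as $\wt T\cap B_k$ is a sub-tube of $\wt T$ with $(R/r_t^{1/2})^2$ cross-section, it has length $\lesssim\la R$ and hence is covered by $\lesssim\la r_t^{1/2}$ balls of radius $R/r_t^{1/2}$. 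Since $\cb(t)$ is finitely overlapping and every $B\in\cb_\la^k(t)\subset\cb^k(t)$ meeting $\wt T$ must meet $\wt T\cap 2B_k$, only $\lesssim\la r_t^{1/2}$ of them do, which is the last assertion.

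I expect the only step needing care, rather than routine pigeonholing, to be the passage ``$B\cap Y_\al(\wt T)=B\cap\wt T$'' used above: one must verify that inside a dilate of $B_k$ the shading $Y_\al$ of a tube in $\wt\cT_{k,\al,\la}(t)$ coincides with the tube itself. This rests on $R/r_t^{1/2}\leq R^{3/4}\ll R^{1-\e_0}$ (valid because $t\leq m_2$ forces $r_t\geq R^{1/2}$), so that each $R/r_t^{1/2}$-ball of $\cb(t)$ meeting $B_k$ lies within $O(1)$ of the $R^{1-\e_0}$-segments of $\wt T$; the single level $\al$ attached to $\wt T$ by the partition \eqref{alpha-lambda} is then exactly the shading level of $\wt T$ seen inside $B_k$. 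Everything else reproduces the bookkeeping of Corollary~\ref{corollary-kakeya-1}.
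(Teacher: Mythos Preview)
Your proof is correct and follows exactly the approach the paper intends: the paper omits the argument entirely, stating only that it is ``similar to Corollary~\ref{corollary-kakeya-1}'', and you have filled in precisely those details---define $\cb_\la^k(t)=\cb^k(t)\cap\cb_\la(t)$, pass the global deficit bound \eqref{cb-m-al-la} through the hypothesis $|\cb^k(t)|\geq R^{-O(\de^2)}|\cb(t)|$, and convert the shading incidence of Lemma~\ref{kakeya-type-lem-2} into a tube incidence via the ball-level identity $B\cap Y_\al(\wt T)=B\cap\wt T$ recorded after \eqref{la-t}. One small remark on the final count: the set equality $Y_\al(\wt T)\cap B_k=\wt T\cap B_k$ that you invoke (as written in the corollary) is really only asserted in the paper at the level of balls $B\in\cb(t)$ with $B\subset B_k$; the cleaner way to get $\lesssim\la r_t^{1/2}$ is to note that each $B\in\cb_\la^k(t)$ meeting $\wt T$ contributes a piece $B\cap\wt T\subset Y_\al(\wt T)$ of volume $\sim(R/r_t^{1/2})^3$, and these pieces are finitely overlapping inside a set of measure $\sim\la|\wt T|$---but your route reaches the same bound.
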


\medskip

At this point we finish defining the new related tubes at all scales $r_1,\ldots,r_{m_2}$. Recall the old relation $\sim$ in Section \ref{subsection-broom}. Define for each $R^{1-\e_0}$-ball $B_{k}$ the ultimate related tubes and non-related tubes as (recall \eqref{related-wp} and \eqref{unrelated-wp})
\begin{equation}
\label{ultimate-related}
    \ZT^{\sim_{\bn}}_{k}=\ZT_{k}^{\sim}\cup\Big(\bigcup_{t=1}^{m_2}\ZT^{\sim_{\bn}}_{k}(t)\Big), \hspace{.5cm}\ZT^{\not\sim_{\bn}}_{k}=\Big(\bigcap_{t=1}^{m_2}\ZT^{\not\sim_{\bn}}_{k}(t)\Big)\cap\ZT_k^{\not\sim},
\end{equation}
so that each $T\in\bigcup_{k}\ZT^{\sim_{\bn}}_{k}$ belongs to $\lesssim R^{\e_0'}$ collections $\{\ZT^{\sim_{\bn}}_{k}\}_{k}$. From these we can define a related function and a non-related function
\begin{equation}
\label{ultimate-related-fcn}
    f^{\sim_{\bn}}_{k}=\Id_{B_{k}}\sum_{T\in\ZT^{\sim_{\bn}}_{k}}f_T, \hspace{.5cm}f^{\not\sim_{\bn}}_{k}=\Id_{B_{k}}\sum_{T\in\ZT^{\not\sim_{\bn}}_{k}}f_T=\Id_{B_{k}}f_k-f^{\sim_{\bn}}_{k},
\end{equation}
where $f_k=\sum_{T\in\ZT_k}f_T$ (see above \eqref{alpha-lambda-4} for $\ZT_k$). We remark that the unrelated function $f^{\not\sim_{\bn}}_{k}$ still enjoys the broom estimate \eqref{broom-esti-2} (see Remark \ref{broom-remark-0}).

\subsection{Handling the related function}
Recall Lemma \ref{algebraic-lem-1} that $\|Ef\|_{\BL^p(O')}^p$ are about the same for all $O'\in\co_{leaf}$. Suppose there is a fraction $\gtrapprox1$ of $O'\in\co_{leaf}$ that $\|Ef\|_{\BL^p(O')}^p\lesssim\sum_{k}\|Ef^{\sim_{\bn}}_{k}\|_{\BL^p(O')}^p$. Then we can conclude our main estimate \eqref{main-esti} by induction on scales.

\begin{lemma}
\label{induction-lem-1}
Suppose \eqref{main-esti} is true for scales $\leq R/2$. Then 
\begin{equation}
    \sum_{B_{k}}\|Ef^{\sim_{\bn}}_{k}\|_{L^p(B_{k})}^p\leq C_\e^p R^{(1-\e_0)p\e+2\e_0'}\|f\|_2^2\sup_{\theta}\|f_\theta\|_{L^2_{ave}}^{p-2}.
\end{equation}
Therefore, from \eqref{partitioning-blp-norm} and the fact that there is a fraction  $\gtrapprox1$ of $O'\in\co_{leaf}$ that $\|Ef\|_{\BL^p(O')}^p\lesssim\sum_{k}\|Ef^{\sim_{\bn}}_{k}\|_{\BL^p(O')}^p$, one has
\begin{align}
    \|Ef\|_{\BL^p(B_R)}^p\lessapprox &\,  C_\e^p R^{(1-\e_0)p\e+2\e_0'}\|f\|_2^2\sup_{\theta}\|f_\theta\|_{L^2_{ave}}^{p-2}\\
    \leq &\, R^{p\e}\|f\|_2^2\sup_{\theta}\|f_\theta\|_{L^2_{ave}}^{p-2},
\end{align}
which closes the induction and hence yields \eqref{main-esti}.
\end{lemma}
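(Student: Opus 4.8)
The plan is to run a routine induction on scales, applying the inductive hypothesis \eqref{main-esti} at the scale $\rho:=R^{1-\e_0}\leq R/2$ separately on each $R^{1-\e_0}$-ball $B_{k}$. The only input that is not pure bookkeeping is the bounded-multiplicity property built into \eqref{ultimate-related}: every tube $T\in\bigcup_{k}\ZT^{\sim_{\bn}}_{k}$ lies in $\lesssim R^{\e_0'}$ of the collections $\{\ZT^{\sim_{\bn}}_{k}\}_{k}$. Write $g_{k}=\sum_{T\in\ZT^{\sim_{\bn}}_{k}}f_T$, so that $Ef^{\sim_{\bn}}_{k}=\Id_{B_{k}}Eg_{k}$ and $\|Ef^{\sim_{\bn}}_{k}\|_{L^p(B_{k})}=\|Eg_{k}\|_{L^p(B_{k})}$; note $g_k$ is supported in $B^2(0,1)$, so the scale-$\rho$ estimate applies to it directly.

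First I would invoke the inductive hypothesis: \eqref{main-esti} at scale $\rho$ gives $\|Eg_{k}\|_{L^p(B_{k})}\leq C_\e\rho^{\e}\|g_{k}\|_2^{2/p}\sup_{\tau:\,d(\tau)=\rho^{-1/2}}\|(g_{k})_\tau\|_{L^2_{ave}}^{1-2/p}$. Two reductions uniform in $k$ follow. For the supremum factor, each $\rho^{-1/2}$-cap $\tau$ is a union of $|\tau|/|\theta|$ caps $\theta$, and $(g_{k})_\tau$ is assembled from the $f_T$ with $\theta_T\subset\tau$; by finite overlap of the caps $\{\theta\}$ and $L^2$-orthogonality in the translation parameter $v$, $\|(g_{k})_\tau\|_2^2\lesssim\sum_{\theta\subset\tau}\|f_\theta\|_2^2\leq(|\tau|/|\theta|)\,|\theta|\,\sup_\theta\|f_\theta\|_{L^2_{ave}}^2$, hence $\sup_\tau\|(g_{k})_\tau\|_{L^2_{ave}}\lesssim\sup_\theta\|f_\theta\|_{L^2_{ave}}$. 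For the $L^2$ factor the same orthogonality yields $\|g_{k}\|_2^2\lesssim\sum_{T\in\ZT^{\sim_{\bn}}_{k}}\|f_T\|_2^2$. Raising the estimate to the power $p$, summing over $k$, swapping the order of summation, and using the multiplicity bound,
\[
    \sum_{k}\|Ef^{\sim_{\bn}}_{k}\|_{L^p(B_{k})}^p\lesssim C_\e^p\rho^{p\e}\Big(\sup_\theta\|f_\theta\|_{L^2_{ave}}\Big)^{p-2}\sum_{k}\sum_{T\in\ZT^{\sim_{\bn}}_{k}}\|f_T\|_2^2\lesssim C_\e^p\rho^{p\e}R^{\e_0'}\Big(\sup_\theta\|f_\theta\|_{L^2_{ave}}\Big)^{p-2}\sum_{T\in\ZT_{\la_1}[R]}\|f_T\|_2^2.
\]
Since $\sum_{T\in\ZT_{\la_1}[R]}\|f_T\|_2^2\lesssim\|f\|_2^2$, $\rho^{p\e}=R^{(1-\e_0)p\e}$, and the implicit constants here are absolute (at worst $(\log R)^{O(1)}$, hence $\leq R^{\e_0'}$ for $R$ large), this is $\leq C_\e^p R^{(1-\e_0)p\e+2\e_0'}\|f\|_2^2\sup_\theta\|f_\theta\|_{L^2_{ave}}^{p-2}$, the first assertion.

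For the ``Therefore'' part I would combine the leaf version of \eqref{partitioning-blp-norm} (i.e.\ $\|Ef\|_{\BL^p(B_R)}^p\lessapprox\sum_{O'\in\co_{leaf}}\|Ef\|_{\BL^p(O')}^p$, from the start of Subsection~\ref{subsection-broom}) with the hypothesis that a $\gtrapprox1$-fraction of leaves satisfy $\|Ef\|_{\BL^p(O')}^p\lesssim\sum_{k}\|Ef^{\sim_{\bn}}_{k}\|_{\BL^p(O')}^p$ and the fact that all the $\|Ef\|_{\BL^p(O')}^p$ are comparable, to get $\|Ef\|_{\BL^p(B_R)}^p\lessapprox\sum_{k}\sum_{O'}\|Ef^{\sim_{\bn}}_{k}\|_{\BL^p(O')}^p$; by disjointness of the leaves (item (7) of Lemma~\ref{algebraic-lem-1}), the support of $Ef^{\sim_{\bn}}_{k}$ in $B_{k}$, and the standard comparison $\|\cdot\|_{\BL^p}\lesssim\|\cdot\|_{L^p}$, this is $\lessapprox\sum_{k}\|Ef^{\sim_{\bn}}_{k}\|_{L^p(B_{k})}^p$, to which the first part applies. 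The induction closes because $C_\e^p R^{(1-\e_0)p\e+2\e_0'}R^{\be}\leq R^{p\e}$ for all sufficiently small $\be$ and all large $R$: the exponent deficit $\e_0 p\e-2\e_0'$ is of size $\asymp\e^{11}$ (as $\e_0=\e^{10}$ far exceeds $\e_0'=\e^{15}$), with room to absorb $2\e_0'+\be$ and the constant $C_\e^p$.

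The main obstacle is not conceptual but a matter of controlling losses: since $\de=\e^3$ is much larger than $\e_0'=\e^{15}$, I must ensure that \emph{no} factor $R^{O(\de)}$ is incurred, so the almost-orthogonality steps and the cross-scale $L^2_{ave}$ comparison have to be performed with absolute (or at most polylogarithmic) constants, and the only admissible loss beyond $C_\e\rho^\e$ is the multiplicity $R^{\e_0'}$ coming from $\sim_{\bn}$. A minor secondary point is to invoke $\|\cdot\|_{\BL^p}\lesssim\|\cdot\|_{L^p}$ for the slightly modified broad norm of this paper, which the excerpt notes shares the relevant properties with the one in \cite{Wang-restriction-R3}.
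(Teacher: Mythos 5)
Your proposal is correct and follows the same route as the paper's proof: invoke the inductive hypothesis at scale $R^{1-\e_0}$ on each $B_k$, then sum over $k$ using Plancherel, the finite multiplicity $R^{\e_0'}$ of the relation $\sim_{\bn}$, and the comparison $\sup_\tau\|(f^{\sim_\bn}_k)_\tau\|_{L^2_{ave}}\lesssim\sup_\theta\|f_\theta\|_{L^2_{ave}}$. You are in fact a bit more careful than the paper's terse two-display argument — you explicitly separate the cross-scale cap comparison (from $\rho^{-1/2}$-caps $\tau$ to $R^{-1/2}$-caps $\theta$), which the paper glosses over notationally by writing $\theta$ for both — but it is the same argument.
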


\begin{proof}

Using the induction hypothesis \eqref{main-esti} at  scale $R^{1-\e_0}$ we have
\begin{equation}
    \|Ef^{\sim_{\bn}}_{k}\|_{L^p(B_{k})}^p\leq C_\e^p R^{(1-\e_0)p\e}\|f^{\sim_{\bn}}_{k}\|_2^2\sup_{\theta}\|(f^{\sim_{\bn}}_{k})_\theta\|_{L^2_{ave}}^{p-2}.
\end{equation}
Note that each $T\in\bigcup_{k}\ZT^{\sim_{\bn}}_{k}$ belongs to $\lesssim R^{\e_0'}$ collections $\{\ZT^{\sim_{\bn}}_{k}\}_{k}$. Hence, after summing up all $B_{k}$ one has by Plancherel
\begin{align}
    &\sum_{B_{k}}\|Ef^{\sim_{\bn}}_{k}\|_{L^p(B_{k})}^p\leq C_\e^p R^{(1-\e_0)p\e}\sum_{B_{k}}\|f^{\sim_{\bn}}_{k}\|_2^2\sup_{\theta}\|(f^{\sim_{\bn}}_{k})_\theta\|_{L^2_{ave}}^{p-2}\\ \nonumber
    \leq &\, CC_\e^p R^{(1-\e_0)p\e+\e_0'}\|f\|_2^2\sup_{\theta}\|f_\theta\|_{L^2_{ave}}^{p-2}\leq C_\e^p R^{(1-\e_0)p\e+2\e_0'}\|f\|_2^2\sup_{\theta}\|f_\theta\|_{L^2_{ave}}^{p-2}.
\end{align}
This is what we desire.
\end{proof}

\medskip
Suppose instead there is a fraction $\gtrapprox1$ of $O'\in\co_{leaf}$ that $\|Ef\|_{\BL^p(O')}^p \lesssim\sum_{k}\|Ef^{\not\sim_{\bn}}_{k}\|_{\BL^p(O')}^p$. We will handle this case in the rest of the paper.

\section{Finding the correct scale}

From the end of last section, we know that there is a fraction $\gtrapprox1$ of $O'\in\co_{leaf}$ that $\|Ef\|_{\BL^p(O')}^p\lesssim\sum_{k}\|Ef^{\not\sim_{\bn}}_{k}\|_{\BL^p(O')}^p$. By pigeonholing, there is an $R^{1-\e_0}$-ball $B_{k}$ (which we fix from now on) and a subset $\co_{leaf,k}^{\not\sim}\subset\co_{leaf}$ so that
\begin{enumerate}
    \item $|\co_{leaf,k}^{\not\sim}|\gtrapprox R^{-10\e_0}|\co_{leaf}|$.
    \item $\|Ef^{\not\sim_{\bn}}_{k}\|_{\BL^p(O')}$ are about the same up to constant multiple for all $O'\in\co_{leaf,k}^{\not\sim}$.
    \item It holds that
    \begin{equation}
    \label{section-4-eq-1}
        \sum_{O'\in\co_{leaf}}\|Ef\|_{\BL^p(O')}^p\lessapprox R^{10\e_0}\sum_{O'\in\co_{leaf,k}^{\not\sim}}\|Ef^{\not\sim_{\bn}}_{k}\|_{\BL^p(O')}^p.
    \end{equation}
\end{enumerate}
Recall \eqref{ultimate-related-fcn} that $f^{\not\sim_{\bn}}_{k}$ is a sum of wave packets $f_T$ for $T\in\ZT^{\not\sim_{\bn}}_{k}$, where 
\begin{equation}
    \ZT^{\not\sim_{\bn}}_{k}\subset\bigcap_{j=1}^{m_2}\ZT^{\not\sim_{\bn}}_{k}(j)\subset\ZT^{\not\sim_{\bn}}_{k}(t)
\end{equation}
for any $t$. Hence from \eqref{alpha-lambda-4}, \eqref{non-al-la-1}, \eqref{non-al-la-2}, and \eqref{non-al-la-3}, we know that at every scale $r_t$ there is a partition 
\begin{equation}
    \ZT^{\not\sim_{\bn}}_{k}=\bigsqcup_{\al,\la}\ZT^{\not\sim_{\bn}}_{k,\al,\la, t}
\end{equation}
with $\ZT^{\not\sim_{\bn}}_{k,\al,\la, t}\subset\ZT^{\not\sim_{\bn}}_{k,\al,\la}(t)$ (see \eqref{non-al-la-1}). We would like to consider the generators of the algebra created by the sets $\{\ZT^{\not\sim_{\bn}}_{k,\al,\la, t}\}_{\al,\la, t}$. To do so, define two vectors $\vec\al=(\al(1),\ldots,\al(m_2))$, $\vec\la=(\la(1),\ldots,\la(m_2))$, and the set
\begin{equation}
\label{generators}
    \ZT_{k,\vec{\al},\vec{\la}}^{\not\sim_{\bn}}=\bigcap_{t}\ZT^{\not\sim_{\bn}}_{k,\al(t),\la(t), t}.
\end{equation}
Then the sets $\{\ZT_{k,\vec{\al},\vec{\la}}^{\not\sim_{\bn}}\}_{\vec{\al},\vec{\la}}$ form a disjoint union of $\ZT^{\not\sim_{\bn}}_{k}$. Since the number of choices for $(\vec{\al},\vec{\la})$ is bounded above by $(\log R)^{O(n^2)}=(\log R)^{O(\e^{-10})}$, by pigeonholing there are a pair $(\vec{\al},\vec{\la})$ and a subset $\co_{leaf,k,\vec{\al},\vec{\la}}^{\not\sim}\subset\co_{leaf,k}^{\not\sim}$ with $|\co_{leaf,k,\vec{\al},\vec{\la}}^{\not\sim}|\gtrapprox|\co_{leaf,k}^{\not\sim}|$ so that the function $f^{\not\sim_{\bn}}_{k,\vec{\al},\vec{\la}}=\sum_{T\in\ZT^{\not\sim_{\bn}}_{k,\vec{\al},\vec{\la}}}f_T$ satiafies
\begin{equation}
    \sum_{O'\in\co_{leaf}}\|Ef\|_{\BL^p(O')}^p\lesssim R^{O(\de)}\sum_{O'\in\co_{leaf,k,\vec{\al},\vec{\la}}^{\not\sim}}\|Ef^{\not\sim_{\bn}}_{k,\vec{\al},\vec{\la}}\|_{\BL^p(O')}^p,
\end{equation}
and that $\|Ef^{\not\sim_{\bn}}_{k,\vec{\al},\vec{\la}}\|_{\BL^p(O')}$ are about the same for all $O'\in \co_{leaf,k,\vec{\al},\vec{\la}}^{\not\sim}(t)$ up to a constant multiple.

To ease notation, we set 
\begin{equation}
\label{g}
    g=f^{\not\sim_{\bn}}_{k,\vec{\al},\vec{\la}}, \hspace{.5cm}    \ZT_g[R]:=\ZT^{\not\sim_{\bn}}_{k,\vec{\al},\vec{\la}}, \hspace{.5cm}\co_{leaf}^{g}=\co_{leaf,k,\vec{\al},\vec{\la}}^{\not\sim}
\end{equation}
in the rest of the paper for simplicity. Hence $g=\sum_{T\in\ZT_{g}[R]}f_T$, $\|Eg\|_{\BL^p(O')}$ are about the same up to a constant multiple for all $O'\in\co_{leaf}^g$,
\begin{equation}
\label{g-estimate-1}
    \sum_{O'\in\co_{leaf}}\|Ef\|_{\BL^p(O')}^p\lesssim R^{O(\de)}\sum_{O'\in\co_{leaf}^{g}}\|Eg\|_{\BL^p(O')}^p,
\end{equation}
and also 
\begin{equation}
\label{leaf-g}
    |\co_{leaf}^{g}|\gtrapprox R^{-O(\e_0)}|\co_{leaf}|.
\end{equation}
We remark that the leaves in $\co_{leaf}^g$ are all contained in the $R^{1-\e_0}$-ball $B_{k}$.

\smallskip

We plug the new function $Eg$ back to the tree structure $\co_{tree}$ obtained in Lemma \ref{algebraic-lem-1}, and want to find the scale that the tangent case dominates. The following lemma is an analogy of Lemma 3.9 in \cite{Wang-restriction-R3}. Its proof is also similar, so the detail is omitted.

\begin{lemma}
\label{algebraic-lem-2}

Recall the tree structure obtained in Lemma \ref{algebraic-lem-1}. For the function $Eg$ in \eqref{g}, either of the following happens:
\begin{enumerate}
    \item There are $\geq 2R^{-\e_0}$-fraction of leaves $O'\in\co_{leaf}^g$ so that
    \begin{equation}
        \|Eg\|_{\BL^p(O')}\leq R^{\e_0}\|Eg_{O'}\|_{\BL^p(O')},
    \end{equation}
    which corresponds to the case that the polynomial partitioning iteration does not stop before reaching the smallest scale $R^\de$.
    \item We can choose the smallest integer $t$, $1\leq t\leq n$ (see Lemma \ref{algebraic-lem-1} for $n$), so that there is a subset $\co_{leaf}^g(t)\subset \co_{leaf}^g$ with
    \begin{equation}
    \label{leaf-g-t}
        |\co_{leaf}^g(t)|\geq 2R_{j_{t-1}}^{-\e_0}|\co_{leaf}^g|,
    \end{equation}
    and for each $O'\in\co_{leaf}^g(t)$, 
    \begin{equation}
        \|Eg\|_{\BL^p(O')}\leq R_{j_{t-1}}^{\e_0}\|Eg_{S_t}\|_{\BL^p(O')},
    \end{equation}
    while for all $1\leq l<t$, 
    \begin{equation}
    \label{small-step-l}
        \|Eg\|_{\BL^p(O')}\geq R_{j_{l-1}}^{\e_0}\|Eg_{S_l}\|_{\BL^p(O')}.
    \end{equation}
\end{enumerate}

If the second case holds, then there are a degree $D=d^{j_t}$, a collection of scale-$t$ fat surfaces, which we still denote by $\cs_t$, so that each $S_t\in\cs_t$ contains about the same amount of leaves in $\co^g_{leaf}(t)$, and (by \eqref{partitioning-blp-norm2} and \eqref{partitioning-blp-norm})
\begin{equation}
\label{cs-t}
    |\cs_t|\gtrapprox R^{-O(\de)} D^3,
\end{equation} 
and that the set $\{O\in \co^g_{leaf}(t):O\subset \bigcup_{S_t\in\cs_t}S_t\}$ contains a fraction $\gtrapprox1$ leaves in $\co^g_{leaf}(t)$. Still denote this fraction of leaves by $\co^g_{leaf}(t)$. In addition, from Lemma \ref{algebraic-lem-1} item (6) we have
\begin{equation}
\label{polynomial-l2}
    \sum_{S_t\in\cs_t}\|g_{S_t}\|_2^2\lesssim D\|g\|_2^2.
\end{equation}

\end{lemma}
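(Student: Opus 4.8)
The plan is to run on each leaf of $\co_{leaf}^g$ a ``first significant tangential scale'' dichotomy, controlling $Eg$ on the leaf by $Eg_{O'}$ together with the tangential contributions $Eg_{S_l}$ along the containing chain of the leaf, and then to produce the surface family $\cs_t$ by pigeonholing against the tree estimates \eqref{partitioning-blp-norm2}--\eqref{partitioning-blp-norm}. Fix a leaf $O'\in\co_{leaf}^g$ with containing chain $O'\subset S_n\subset\cdots\subset S_1\subset B_R$ as in \eqref{containing-chain}. Since \eqref{iterative-equation} only records which wave packets contribute at a given point, it stays valid with $f$ replaced by the subsum $g$ and $\ZT$ taken to be all wave packets of $g$; with the convention $S_0=B_R$ it yields $Eg(x)=Eg_{O'}(x)+\sum_{l=1}^{n}Eg_{S_l}(x)$ for $x\in O'$, hence, by the triangle inequality for the $\BL^p$-norm,
\begin{equation}
    \|Eg\|_{\BL^p(O')}\lesssim\|Eg_{O'}\|_{\BL^p(O')}+\sum_{l=1}^{n}\|Eg_{S_l}\|_{\BL^p(O')}.
\end{equation}

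Call $O'$ \emph{bad at level $l$} if $\|Eg_{S_l}\|_{\BL^p(O')}\geq R_{j_{l-1}}^{-\e_0}\|Eg\|_{\BL^p(O')}$. Since $n\leq\de^{-2}$ and every $R_{j_{l-1}}\geq R^{\de}$, one has $\sum_{l}R_{j_{l-1}}^{-\e_0}\leq\de^{-2}R^{-\de\e_0}\ll 1$, so if $O'$ is bad at no level then the displayed bound forces $\|Eg\|_{\BL^p(O')}\lesssim\|Eg_{O'}\|_{\BL^p(O')}\leq R^{\e_0}\|Eg_{O'}\|_{\BL^p(O')}$; otherwise let $t(O')$ be the smallest level at which $O'$ is bad. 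If a $2R^{-\e_0}$-fraction of $\co_{leaf}^g$ is bad at no level, we are in case (1). Otherwise the sets $A_t=\{O'\in\co_{leaf}^g:t(O')=t\}$ are disjoint with $\sum_t|A_t|\geq\tfrac12|\co_{leaf}^g|$, and since $\sum_t 2R_{j_{t-1}}^{-\e_0}\ll\tfrac12$ there is a $t$ with $|A_t|\geq 2R_{j_{t-1}}^{-\e_0}|\co_{leaf}^g|$; take the smallest such $t$ and set $\co_{leaf}^g(t)=A_t$. By the definition of $t(O')$, for every $O'\in\co_{leaf}^g(t)$ we have $\|Eg\|_{\BL^p(O')}\leq R_{j_{t-1}}^{\e_0}\|Eg_{S_t}\|_{\BL^p(O')}$ and, for $l<t$, $\|Eg\|_{\BL^p(O')}>R_{j_{l-1}}^{\e_0}\|Eg_{S_l}\|_{\BL^p(O')}$ (a harmless perturbation of the thresholds makes the last inequality non-strict). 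This is the claimed dichotomy.

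Suppose case (2) holds. From \eqref{leaf-g} and $R_{j_{t-1}}\leq R$ we get $|\co_{leaf}^g(t)|\gtrapprox R^{-O(\e_0)}|\co_{leaf}|$. Feeding \eqref{partitioning-blp-norm2} and \eqref{partitioning-blp-norm} into the depth-$j_t$ slice (where $T(j_t)=t$): the leaves are disjoint by Lemma \ref{algebraic-lem-1} item (7) and $\|Ef\|_{\BL^p(O')}$ is uniform over all leaves, so each step-$t$ fat surface contains $\lessapprox|\co_{leaf}|/d^{3(j_t-t)}\lessapprox|\co_{leaf}|/D^3$ leaves, using $d^{3t}\leq d^{3\de^{-2}}=O_\e(1)\lessapprox 1$ and that the incidental factors $2^{\pm j_t},(\log R)^{\pm t}$ are $\gtrapprox 1$ because $j_t\lesssim\log R/\log\log R$ and $t\leq\de^{-2}$. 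Consequently the leaves of $\co_{leaf}^g(t)$ spread over $\gtrapprox|\co_{leaf}^g(t)|\,D^3/|\co_{leaf}|\gtrapprox R^{-O(\e_0)}D^3$ distinct step-$t$ fat surfaces; a dyadic pigeonhole uniformizing the number of $\co_{leaf}^g(t)$-leaves per surface then gives the refined family $\cs_t$ with $|\cs_t|\gtrapprox R^{-O(\e_0)}D^3\geq R^{-O(\de)}D^3$ (here $\e_0=\e^{10}\le\de=\e^{3}$), each $S_t\in\cs_t$ carrying the same number of leaves, and the leaves of $\co_{leaf}^g(t)$ lying in $\bigcup\cs_t$ forming a $\gtrapprox 1$-fraction of $\co_{leaf}^g(t)$ (which we relabel $\co_{leaf}^g(t)$). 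Finally $g=\sum_{T\in\ZT_g[R]}f_T$ with the $f_T$ $L^2$-orthogonal, and iterating the ``$\lesssim d$ children per tube'' bound of Lemma \ref{algebraic-lem-1} item (6) down to depth $j_t$ shows each $f_T$ feeds at most $\lesssim d^{j_t}=D$ of the functions $g_{S_t}$, $S_t\in\cs_t$; therefore $\sum_{S_t\in\cs_t}\|g_{S_t}\|_2^2\lesssim D\sum_T\|f_T\|_2^2\lesssim D\|g\|_2^2$, which is \eqref{polynomial-l2}.

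The main obstacle is the bookkeeping in the last two paragraphs: one must pick the \emph{smallest} admissible $t$ while retaining a fraction of leaves that simultaneously beats the loss $R_{j_{t-1}}^{-\e_0}$ and is spread over enough fat surfaces to force $|\cs_t|\gtrapprox R^{-O(\de)}D^3$, and one must check that every incidental factor ($2^{\pm j_t}$, $(\log R)^{\pm t}$, $d^{\pm 3t}$, and all the accumulated $R^{-O(\e_0)}$ losses) is absorbed into $R^{O(\de)}$ — which is exactly why $\e_0=\e^{10}$ is taken far below $\de=\e^{3}$. This is the content of Lemma 3.9 in \cite{Wang-restriction-R3}, whose proof carries over essentially verbatim.
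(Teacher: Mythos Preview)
Your proposal is correct and follows the approach the paper defers to; the paper itself omits the proof, saying only that it is an analogue of Lemma~3.9 in \cite{Wang-restriction-R3} with a similar argument. Your iteration of the tangent/transverse identity on each leaf, the first-bad-level pigeonholing to select $t$, and the extraction of $|\cs_t|\gtrapprox D^3$ from \eqref{partitioning-blp-norm2}--\eqref{partitioning-blp-norm} together with item~(6) for \eqref{polynomial-l2} is exactly the expected argument.
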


If the first case in Lemma \ref{algebraic-lem-2} holds, then similar to Lemma 4.1 in \cite{Wang-restriction-R3}, one can prove \eqref{main-esti} for $p>3$. Therefore, let us assume that the second case in lemma \ref{algebraic-lem-2} is true, so we are given a $t$ and a corresponding scale $r_t$.

\begin{remark}
\label{remark-regularity-1}
\rm 

Since $\|Eg\|_{\BL^p(O')}$ are about the same (see the beginning of this section), without loss of generality let us assume $\|Eg_{S_t}\|_{\BL^p(O')}$ are also about the same up to a constant multiple for all $O'\in\co_{leaf}^g(t)$.
\end{remark}

Recall in \eqref{containing-chain} that each leaf $O'$ is within a containing chain $O'\subset S_n\subset S_{n-1}\subset\cdots\subset S_1\subset B_R$. Similar to \eqref{pointwise-cell-2} and \eqref{iterative-equation}, we have for $S_j\subset O_j\subset S_{j-1}$, 
\begin{equation}
    Eg_{O_j}(x)= Eg_{S_{j-1},trans}(x),\hspace{.5cm}x\in O_j,
\end{equation}
and hence for any subcollection $\ZT\subset\ZT_{O_j}$,
\begin{equation}
\label{iterative-equation-2}
    Eg_{O_j}^\ZT(x)= Eg^\ZT(x)-\Big(\sum_{l=1}^{j-1}Eg_{S_l}^\ZT(x)\Big), \hspace{.5cm}x\in O_j.
\end{equation}
For the $S_t$ we have
\begin{equation}
\label{cell-function}
    Eg_{S_t}= Eg^{\ZT_{S_t}}-\Big(\sum_{l=1}^{j-1}Eg_{S_l}^{\ZT_{S_t}}\Big).
\end{equation}
Note that \eqref{fat-fat} still holds when $f$ is replaced by $g$. Hence the contribution from  $Eg_{S_l}^{\ZT_{S_t}}$ is also dominated by the contribution from $Eg_{S_l}$, which is negligible since step $t$ is the first step that tangential case dominates (see \eqref{small-step-l}). Thus, for each leaf $O'\in\co_{leaf}^g(t)$, one has similar to \eqref{fat-fat-cor} that
\begin{equation}
    \|Eg_{S_t}\|_{\BL^p(O')}\sim\|Eg^{\ZT_{S_t}}\|_{\BL^p(O')}.
\end{equation}
This leads to, by an abuse on notation $\|Eg\|_{\BL^p(B_R)}^p=\sum_{O'\in\co_{leaf}^g(t)}\|Eg\|_{\BL^p(O')}^p$,
\begin{align}
\label{non-related-BLp}
    \|Eg\|_{\BL^p(B_R)}^p&\lesssim R^{O({\de})}\sum_{S_t}\sum_{\substack{O'\in\co_{leaf}^g(t),\\O'\subset S_t}}\|Eg_{S_t}\|_{\BL^p(O')}^p\\
    &\sim R^{O({\de})}\sum_{S_t}\sum_{\substack{O'\in\co_{leaf}^g(t),\\O'\subset S_t}}\|Eg^{\ZT_{S_t}}\|_{\BL^p(O')}^p.
\end{align}

The step $t$ corresponds to a scale $r_t=R_{j_t-1}$, which we denote by $r$ in the rest of the paper for brevity. If $r\leq R^{1/2}$ then by Lemma 7.5 in \cite{Wang-restriction-R3} we know that \eqref{main-esti} is true when $p>3.2=48/15$ (see also Remark \ref{broom-remark-0}). Hence, let us assume $r\geq R^{1/2}$ from now on. By Lemma \ref{algebraic-lem-1} item (4), we know that the fat surfaces in $\cs_t$ are essentially $r^{1/2+\de}$-separated. Let us put this into a lemma for later use.
\begin{lemma}
\label{disjointness-of-cells}
The sets $\{N_{r^{1/2+\de}}(S_t)\}_{S_t\in\cs_t}$ are finitely overlapping.
\end{lemma}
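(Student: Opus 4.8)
The plan is to read this off directly from item (4) of Lemma \ref{algebraic-lem-1}, together with the notational identifications fixed earlier in this section. Recall that each fat surface $S_t \in \cs_t$ is the depth-$j_t$ node $O_{j_t}$ of the tree $\co_{tree}$, and by item (3) of Lemma \ref{algebraic-lem-1} the node $O_{j_t}$ is a portion of a fat $R_{j_t-1}$-surface; the scale $r = r_t$ used in this part of the paper is by definition $R_{j_t-1}$. Item (4) of Lemma \ref{algebraic-lem-1} asserts precisely that $\{N_{R_{j_t-1}^{1/2+\de}}(S_t)\}_{S_t\in\cs_t}$ is finitely overlapping, so substituting $r = R_{j_t-1}$ gives exactly the claimed statement for the original collection $\cs_t$.

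The one point to verify is that the collection $\cs_t$ appearing in the present statement — which has been shrunk twice, first in the leaf sortings of Section \ref{sorting-section} and again by pigeonholing in Lemma \ref{algebraic-lem-2} (``which we still denote by $\cs_t$'') — is in each case a \emph{subcollection} of the original $\cs_t$ produced by Lemma \ref{algebraic-lem-1}. This is clear from how those refinements are carried out. Since any subcollection of a finitely overlapping family is again finitely overlapping — each point of $\ZR^3$ lies in no more of the neighborhoods $N_{r^{1/2+\de}}(S_t)$ than before — the overlap bound survives all the refinements, with the same absolute constant.

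There is essentially no obstacle here: the ``hard part'' is purely bookkeeping, namely keeping straight the slightly conflicting uses of the symbol $r_t$ (it is $R_{j_t}$ in Section \ref{sorting-section} but $R_{j_t-1}$ in this section) and confirming that the neighborhood radius $r^{1/2+\de}$ coincides with the radius $R_{j_t-1}^{1/2+\de}$ already built into the polynomial-partitioning output. No new geometric estimate is needed — in particular, no appeal to Wongkew's theorem, which enters only when bounding $|S_t|$ or $|\bq_{S_t}|$, not the mutual overlap of the surfaces.
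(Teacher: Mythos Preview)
Your proposal is correct and matches the paper's approach exactly: the paper simply notes, immediately before stating the lemma, that ``By Lemma \ref{algebraic-lem-1} item (4), we know that the fat surfaces in $\cs_t$ are essentially $r^{1/2+\de}$-separated,'' and then records this as Lemma \ref{disjointness-of-cells} without further proof. Your additional remarks about the refinements of $\cs_t$ being subcollections (hence preserving finite overlap) and about the identification $r = R_{j_t-1}$ are accurate and fill in the bookkeeping that the paper leaves implicit.
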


Note that by \eqref{generators}, \eqref{g}, the set $\ZT_g[R]$ is contained in $\ZT^{\not\sim_{\bn}}_{k,\al,\la}(t)$ for some $\al,\la$ (see \eqref{non-al-la-1}, \eqref{non-al-la-3} for $\ZT^{\not\sim_{\bn}}_{k,\al,\la}(t)$). The following two lemmas are the consequences of the two-ends reduction in Section \ref{two-ends-section}. We remark that from the sorting of leaves (Section \ref{sorting-section}) and from \eqref{leaf-g}, the assumptions $|\cq_{\la_3}^{k}|\geq R^{-O(\de^2)} |\cq_{\la_3}|$ and $|\cb^{k}(t)|\geq R^{-O(\de^2)} |\cb(t)|$ in Corollary \ref{corollary-kakeya-1} and Corollary \ref{corollary-kakeya-2} respectively hold readily.

\begin{lemma}
\label{first-method-lem}
Suppose $R^{1/2}\leq r=r_t\leq R^{2/3}$. Let $\rho_j$ be that $\rho_j\geq R/r^{1/2}\geq \rho_{j-1}$ (see \eqref{scales-1} for $\rho_j$). Then from Section \ref{second-related-tubes} (Corollary \ref{corollary-kakeya-2} in particular) we have
\begin{enumerate}
    \item Two dyadic numbers $\al,\la$.
    \item A collection of $R/r^{1/2}\times R/r^{1/2}\times R$-tubes $\wt\cT^{\not\sim_{\bn}}_{k,\al,\la}(t)$ so that each $R$-tube in $\ZT_{g}[R]$ (see \eqref{g} for $\ZT_g[R]$) is contained in some fat tube $\wt T\in\wt\cT^{\not\sim_{\bn}}_{k,\al,\la}(t)$.
    \item  A collection of  $R/r^{1/2}$-balls $\cb_\la^{k}(t)$ so that each ball $B\in\cb_\la^{k}(t)$ intersects $\lesssim R^{O(\de)}r^{1/4}\la^{-1}\ka_2(j)$ tubes $\wt T\in\wt\cT^{\not\sim_{\bn}}_{k,\al,\la}(t)$, and $   |\cb^{k}(t)\setminus\cb_\la^{k}(t)|\lessapprox R^{-\Om(\de)}|\cb^{k}(t)|$.
    \item Every leaf in $\co_{leaf}^{g}$ is contained in some of $B\in\cb^{k}(t)$.
    \item Every $T\in\ZT_{g}[R]$ intersects $\lesssim \la r^{1/2}$ balls in $\cb_\la^{k}(t)$.
\end{enumerate}
Note that each $B\in\cb^{k}(t)$ contains about the same amount of leaves in $\co_{leaf}$ (see Section \ref{second-sorting}). As a consequence of (3), (4), and  $|\co_{leaf}^g(t)|\gtrsim R^{-O(\e_0)}|\co_{leaf}^g|\gtrapprox R^{-O(\e_0)}|\co_{leaf}|$ from Lemma \ref{algebraic-lem-2} and \eqref{leaf-g} respectively, the set $\{O'\in\co_{leaf}^g:O'\subset B\text{ for some }B\in\cb_\la^{k}(t)\}$ contains a fraction $\gtrapprox1$ of leaves in $\co_{leaf}^{g}(t)$. Hence, if still denote this fraction of leaves by $\co_{leaf}^g(t)$, then we have
\begin{enumerate}
    \item[(6)]  Every leaf in $\co_{leaf}^{g}(t)$ is contained in some of $B\in\cb^{k}_\la(t)$.
\end{enumerate}
\end{lemma}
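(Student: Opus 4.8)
The plan is to unwind the definitions of Section~\ref{second-related-tubes} and quote Corollary~\ref{corollary-kakeya-2}: every object appearing in the statement has already been constructed there, so the lemma amounts to assembling the pieces. First I would recall from \eqref{g} that $\ZT_g[R]=\ZT_{k,\vec{\al},\vec{\la}}^{\not\sim_{\bn}}$, which by \eqref{generators} is an intersection over all scales and is therefore contained in $\ZT^{\not\sim_{\bn}}_{k,\al(t),\la(t),t}$, hence in $\ZT^{\not\sim_{\bn}}_{k,\al(t),\la(t)}(t)$, for the index $t$ furnished by Lemma~\ref{algebraic-lem-2}; setting $\al=\al(t)$, $\la=\la(t)$ gives (1). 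By the definition \eqref{non-al-la-3} of $\ZT^{\not\sim_{\bn}}_{k,\al,\la}(t)$, each $R$-tube of that set -- in particular each $T\in\ZT_g[R]$ -- is contained in some $R/r^{1/2}\times R/r^{1/2}\times R$-tube $\wt T\in\wt\cT^{\not\sim_{\bn}}_{k,\al,\la}(t)$, which is (2). For (4) I would invoke the second sorting (Section~\ref{second-sorting}): the hypothesis $R^{1/2}\le r\le R^{2/3}$ places $t$ in the range $m_1+1\le t\le m_2$ of that sorting, so every leaf of $\co_{leaf}^g\subset\co_{leaf}=\co_{leaf,m_2}$ lies in some $B\in\cb(t)$; since the leaf also lies in $B_k$ (the remark after \eqref{leaf-g}), $B\cap B_k\neq\varnothing$, i.e.\ $B\in\cb^k(t)$.

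Next I would apply Corollary~\ref{corollary-kakeya-2} to the pair $(\al,\la)$ at scale $r_t=r$. Its hypothesis $|\cb^k(t)|\geq R^{-O(\de^2)}|\cb(t)|$ holds: the leaves of $\co_{leaf}^g$ all sit in $B_k$ and number $\gtrapprox R^{-O(\e_0)}|\co_{leaf}|$ by \eqref{leaf-g}, and since the balls of $\cb(t)$ carry comparable numbers of leaves, a $\gtrapprox R^{-O(\e_0)}$ fraction of $\cb(t)$ already lies in $\cb^k(t)$, with $R^{-O(\e_0)}\geq R^{-O(\de^2)}$. The corollary then produces $\cb_\la^k(t)\subset\cb^k(t)\cap\cb_\la(t)$ with $|\cb^k(t)\setminus\cb_\la^k(t)|\lessapprox R^{-\Om(\de)}|\cb^k(t)|$, each $B\in\cb_\la^k(t)$ meeting $\lesssim R^{O(\de)}r^{1/4}\la^{-1}\ka_2(j)$ fat tubes of $\wt\cT^{\not\sim_{\bn}}_{k,\al,\la}(t)$ -- this is (3) -- and each such fat tube meeting $\lesssim\la r^{1/2}$ balls of $\cb_\la^k(t)$; combining the latter with (2), which puts each $T\in\ZT_g[R]$ inside one such fat tube, shows $T$ meets $\lesssim\la r^{1/2}$ balls of $\cb_\la^k(t)$, which is (5).

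Finally, for (6): since the balls of $\cb^k(t)$ hold roughly equal numbers of leaves of $\co_{leaf}$, the bound $|\cb^k(t)\setminus\cb_\la^k(t)|\lessapprox R^{-\Om(\de)}|\cb^k(t)|$ forces the leaves of $\co_{leaf}^g$ that lie in no ball of $\cb_\la^k(t)$ to number $\lessapprox R^{-\Om(\de)}|\co_{leaf}|$, while $|\co_{leaf}^g(t)|\gtrsim R^{-O(\e_0)}|\co_{leaf}^g|\gtrapprox R^{-O(\e_0)}|\co_{leaf}|$ by Lemma~\ref{algebraic-lem-2} and \eqref{leaf-g}. Hence the ``bad'' leaves form a negligible fraction of $\co_{leaf}^g(t)$; discarding them and relabelling yields (6). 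The single delicate point -- the closest thing to an obstacle in what is otherwise a purely organizational argument -- is exactly this last comparison: the $R^{-\Om(\de)}$ loss incurred by discarding the bad balls must dominate the $R^{-O(\e_0)}$ lower bound on the density of surviving leaves, and this is guaranteed by the numerical hierarchy $\de=\e^3\gg\e^{10}=\e_0$ fixed in the Notations.
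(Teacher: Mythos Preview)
Your proposal is correct and matches the paper's approach: the paper does not give Lemma~\ref{first-method-lem} a separate proof, treating it instead as an organizational summary of the constructions in Section~\ref{second-related-tubes} (with the remark just before the lemma that the hypothesis $|\cb^k(t)|\ge R^{-O(\de^2)}|\cb(t)|$ of Corollary~\ref{corollary-kakeya-2} ``holds readily''), and the reasoning for (6) is already folded into the lemma statement itself. You have simply written out in full what the paper leaves implicit, tracing each item back to its source and checking the numerical comparison $\de\gg\e_0$ needed for the final pigeonholing step.
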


\begin{lemma}
\label{second-method-lem}
Suppose $R\geq r=r_t\geq R^{2/3}$. Let $\rho_j$ be that $\rho_j\geq R/r^{1/2}\geq \rho_{j-1}$ (see \eqref{scales-1} for $\rho_j$). We have from Section \ref{first-sorting}, Section \ref{first-related-tubes} (Corollar \ref{corollary-kakeya-1} in particular) and Lemma \ref{algebraic-lem-2} that
\begin{enumerate}
    \item Three dyadic numbers $\la_2,\la_3,\la_6$.
    \item A collection of scale-$r$ fat surfaces $\cs_t$ and a collection of $R^{1/2}$-balls $\cq_{\la_3}$ so that by Lemma \ref{lem: conservation of lambdas}, Lemma \ref{algebraic-lem-2} (\eqref{cs-t} in particular), every $S_t\in\cs_t$ is contained in $B_k$  (see above \eqref{section-4-eq-1} for $B_{k}$), and $\la_6|\cs_t|\approx R^{O(\de)} \la_3|\cq_{\la_3}|$.
    \item A set of $r^{1/2}$-cubes $\bq_t$. For every $Q\in\cq_{\la_3}, S_t\in\cs_t$, a set of $r^{1/2}$-cubes $\bq_{S_t,Q}=\{q\in\bq_t:q\subset S_t\cap Q\}$ with $|\bq_{S_t,Q}|\sim\la_2$ so that any leaf in $\co_{leaf}^g$ contained in $Q\cap S_t$ is contained by some $q\in\bq_{S_t,Q}$. Also, $|\cs_t(Q)|\sim \la_3$ (see \eqref{la-3-t}) and the set $\bq_{S_t}=\{q\in\bq_t:q\subset S_t\}$ has cardinality $\lesssim \la_2\la_6$.
    \item A set $\cq_{\la_3}^{k}=\{Q\in\cq_{\la_3}:Q\subset B_{k}\}$. Since $|\cq_{\la_3}^{k}|\geq R^{-O(\de^2)} |\cq_{\la_3}|$ (mentioned above Lemma \ref{first-method-lem}), we still have $\la_6|\cs_t|\approx R^{O(\de)} \la_3|\cq_{\la_3}^k|$.
    \item Two dyadic numbers $\al,\la$, a subset $\cq_{\la_3,\la}^{k}$ with $|\cq_{\la_3}^{k}\setminus\cq_{\la_3,\la}^{k}|\lessapprox R^{-\Om(\de)}|\cq_{\la_3}^k|$, so that each $Q\in\cq_{\la_3,\la}^{k}$ intersects $\lessapprox R^{O(\de)}R^{1/4}\la^{-1}\la_1$ tubes in $\ZT_g[R]$. Moreover, an $L^1$ estimate \eqref{l1-estimate}.
    \item Every leaf in $\co_{leaf}^g$ is contained in some $Q\in\cq_{\la_3}^{k}$.
\end{enumerate}
Note that each $Q\in\cq_{\la_3}^{k}$ contains about the same amount of leaves in $\co_{leaf}$ (see Section \ref{first-sorting}). As a consequence of (5), (6), and $|\co_{leaf}^g(t)|\gtrsim R^{-O(\e_0)}|\co_{leaf}^g|\gtrapprox R^{-O(\e_0)}|\co_{leaf}|$ from Lemma \ref{algebraic-lem-2} and \eqref{leaf-g} respectively, the set $\{O'\in\co_{leaf}^g:O'\subset Q\text{ for some }Q\in\cq_{\la_3,\la}^{k}\}$ contains a fraction $\gtrapprox R^{-O(\e_0)}$ of leaves in $\co_{leaf}^{g}(t)$. Hence, if still denote this fraction of leaves by $\co_{leaf}^g(t)$, then we have
\begin{enumerate}
    \item[(7)] Every leaf in $\co_{leaf}^{g}(t)$ is contained in some of $Q\in\cq_{\la_3,\la}^{k}$.
\end{enumerate}
In addition, since again each $Q\in\cq_{\la_3}^k$ contains about the same amount of leaves in $\co_{leaf}$, by pigeonholing we can find a subset $\bar\cq_{\la_3,\la}^{k}\subset \cq_{\la_3,\la}^{k}$ so that 
\begin{enumerate}
    \item[(8)] $|\bar\cq_{\la_3,\la}^{k}|\gtrapprox R^{-O(\de)}|\cq_{\la_3,\la}^{k}|$, and the set $\{O'\in\co_{leaf}^g(t):O'\subset Q\text{ for some }Q\in\bar\cq_{\la_3,\la}^{k}\}$ contains a fraction $\gtrapprox R^{-O(\e_0)}$ of leaves in $\co_{leaf}^{g}(t)$, and each $Q\in\bar\cq_{\la_3,\la}^{k}$ intersects $\sim\mu\lessapprox R^{O(\de)}R^{1/4}\la^{-1}\la_1$ tubes in $\ZT_g[R]$ (see item (5)). Moreover, by \eqref{l1-estimate} we have $\mu|\cq_{\la_3,\la}^{k}|\lessapprox R^{O(\e_0)}\mu|\bar\cq_{\la_3,\la}^{k}|\lessapprox R^{O(\e_0)} \la\la_1R^{3/2}$.
\end{enumerate}

\end{lemma}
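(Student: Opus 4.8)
The plan is to assemble Lemma~\ref{second-method-lem} directly from material already in place: the first sorting of leaves (Section~\ref{first-sorting}), the conservation Lemma~\ref{lem: conservation of lambdas}, the scale-selection Lemma~\ref{algebraic-lem-2}, and the first-type related tubes of Section~\ref{first-related-tubes} together with Corollary~\ref{corollary-kakeya-1}; everything is then localized to the fixed $R^{1-\e_0}$-ball $B_k$ and to the surviving leaf set $\co_{leaf}^g$. Since $r=r_t\geq R^{2/3}$ we are in the range $1\leq t\leq m_1$, so it is the construction of Section~\ref{first-related-tubes} that is relevant, and by \eqref{generators} and \eqref{g} one has $\ZT_g[R]\subset\ZT^{\not\sim_{\bn}}_{k,\al,\la}(t)$ for the pair $(\al,\la)=(\al(t),\la(t))$. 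This pair is the one in item~(5), and $\la_1$ is the parameter from \eqref{la-1}.

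First I would upgrade the sorting data to $\co_{leaf}^g$. By \eqref{leaf-g}, $|\co_{leaf}^g|\gtrapprox R^{-O(\e_0)}|\co_{leaf}|\gtrapprox R^{-O(\e_0)}|\co_{leaf,t}|$, and since $\e_0=\e^{10}\ll\de/2$ this is $\gtrapprox R^{-\de/2}|\co_{leaf,t}|$, so Lemma~\ref{lem: conservation of lambdas} applies with $\co_{leaf}^g$ replacing $\co_{leaf}$. It produces refined versions of $\bq_t,\cs_t,\cq_{\la_3}$, with the same names after relabeling, such that each surviving $r^{1/2}$-cube carries, up to a factor $R^{\de}$, the same number of leaves of $\co_{leaf}^g$, the three parameters change by at most $R^{-O(\de)}$, and $|\co_{leaf}^g|$ is essentially preserved; this gives item~(1) and the relations \eqref{la-2-t}, \eqref{la-3-t}, \eqref{pigeonholing-eq1}, \eqref{pigeonholing-eq2}, \eqref{bq-S-t} of item~(3). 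Since every leaf of $\co_{leaf}^g$ lies in $B_k$ (noted below \eqref{leaf-g}), I discard every $S_t$ and every $Q$ not contained in $B_k$; combined with \eqref{cs-t} of Lemma~\ref{algebraic-lem-2} this keeps $|\cs_t|\gtrapprox R^{-O(\de)}D^3$ and, using $|\cq_{\la_3}^k|\geq R^{-O(\de^2)}|\cq_{\la_3}|$ as recorded above Lemma~\ref{first-method-lem}, gives items~(2) and (4) together with $\la_6|\cs_t|\approx R^{O(\de)}\la_3|\cq_{\la_3}|\approx R^{O(\de)}\la_3|\cq_{\la_3}^k|$. Item~(6) is then immediate: a leaf of $\co_{leaf}^g$ sits in some $q\in\bq_t$, hence in the pair $q\subset S_t\cap Q$ assigned to $q$ by \eqref{injection}, hence in $Q$, and that $Q$ lies in $B_k$, so $Q\in\cq_{\la_3}^k$.

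Next I would apply Corollary~\ref{corollary-kakeya-1} with the above $(\al,\la)$: its hypothesis $|\cq_{\la_3}^k|\geq R^{-O(\de^2)}|\cq_{\la_3}|$ holds by the previous paragraph, so it supplies $\cq_{\la_3,\la}^k\subset\cq_{\la_3}^k$ with $|\cq_{\la_3}^k\setminus\cq_{\la_3,\la}^k|\lessapprox R^{-\Om(\de)}|\cq_{\la_3}^k|$ on which every cube meets $\lessapprox R^{O(\de)}R^{1/4}\la^{-1}\la_1$ tubes of $\ZT^{\not\sim_{\bn}}_{k,\al,\la}(t)\supset\ZT_g[R]$; with the $L^1$ estimate \eqref{l1-estimate} this is item~(5). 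For item~(7), each $Q\in\cq_{\la_3}^k$ carries about the same number of leaves, so discarding the $\lessapprox R^{-\Om(\de)}$ fraction of cubes in passing to $\cq_{\la_3,\la}^k$ removes only a $\lessapprox R^{-\Om(\de)}$ fraction of leaves; together with $|\co_{leaf}^g(t)|\gtrapprox R^{-O(\e_0)}|\co_{leaf}|$ from Lemma~\ref{algebraic-lem-2} and \eqref{leaf-g}, a fraction $\gtrapprox R^{-O(\e_0)}$ of $\co_{leaf}^g(t)$ still lies in $\bigcup_{Q\in\cq_{\la_3,\la}^k}Q$, and I relabel this sub-collection $\co_{leaf}^g(t)$. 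For item~(8) I would dyadically pigeonhole on the $O(\log R)$ possible values of the number of $\ZT_g[R]$-tubes meeting a cube $Q\in\cq_{\la_3,\la}^k$, obtaining $\bar\cq_{\la_3,\la}^k$ --- a $\gtrapprox R^{-O(\de)}$ fraction of $\cq_{\la_3,\la}^k$ --- on which this number is $\sim\mu$ with $\mu\lessapprox R^{O(\de)}R^{1/4}\la^{-1}\la_1$; uniformity of leaves per cube again keeps a $\gtrapprox R^{-O(\e_0)}$ fraction of $\co_{leaf}^g(t)$. Finally, for $Q\in\cq_{\la_3}^k$ one has $Y_\al(T)\cap Q=T\cap Q$ for $T\in\ZT_g[R]$, so summing \eqref{l1-estimate} over $\bar\cq_{\la_3,\la}^k$ yields $\mu|\bar\cq_{\la_3,\la}^k|R^{3/2}\lesssim\la\la_1 R^2|\Theta_{\la_1}[R]|\lesssim\la\la_1 R^3$, i.e.\ $\mu|\bar\cq_{\la_3,\la}^k|\lesssim\la\la_1 R^{3/2}$, and then $\mu|\cq_{\la_3,\la}^k|\lessapprox R^{O(\e_0)}\mu|\bar\cq_{\la_3,\la}^k|\lessapprox R^{O(\e_0)}\la\la_1 R^{3/2}$.

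The statement is essentially a packaging lemma, and its one substantive input --- the $R^{1/4}$ incidence gain --- already lives inside Corollary~\ref{corollary-kakeya-1}, which rests on the refined hairbrush Lemma~\ref{two-ends-hairbrush}, and inside the conservation Lemma~\ref{lem: conservation of lambdas}. The main point to watch is therefore bookkeeping: one must check that the hypotheses of every invoked result survive localization to $B_k$ --- chiefly $|\cq_{\la_3}^k|\geq R^{-O(\de^2)}|\cq_{\la_3}|$ and $|\co_{leaf}^g|\gtrapprox R^{-\de/2}|\co_{leaf,t}|$ --- and that the successive refinements (the conservation lemma, discarding surfaces and cubes outside $B_k$, Corollary~\ref{corollary-kakeya-1}, the pigeonholing for item~(7), and the dyadic $\mu$-pigeonhole) cost in total only $R^{O(\e_0)}$, up to harmless $R^{O(\de)}$ and $(\log R)^{O(1)}$ factors, which is exactly the slack the statement leaves.
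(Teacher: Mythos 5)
The paper leaves Lemma~\ref{second-method-lem} without a written proof---it is a packaging lemma whose claims are each tied to cited references---and your reconstruction identifies and applies those references in essentially the way the paper intends: first sorting and Lemma~\ref{lem: conservation of lambdas} for items (1)--(4) and (6), Corollary~\ref{corollary-kakeya-1} for item (5), uniformity of leaves-per-cube plus the $\de\gg\e_0$ gap for item (7), and a dyadic $\mu$-pigeonhole together with the restricted sum from \eqref{l1-estimate} for item (8). Your argument for item (7) in fact yields a $\gtrapprox1$ fraction of $\co_{leaf}^g(t)$, stronger than the paper's stated $\gtrapprox R^{-O(\e_0)}$; and your verification that Lemma~\ref{lem: conservation of lambdas} applies (via $|\co_{leaf}^g|\gtrapprox R^{-O(\e_0)}|\co_{leaf,t}|\geq R^{-\de/2}|\co_{leaf,t}|$) and that $Y_\al(T)\cap Q=T\cap Q$ for $T\in\ZT_g[R]\subset\ZT_{k,\al,\la}(t)$ and $Q\subset B_k$ are the right consistency checks. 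This is the same approach as the paper, made explicit.

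One bookkeeping point worth flagging: the set $\cq_{\la_3,\la}\subset\cq_{\la_3}$ of Lemma~\ref{kakeya-type-lem-1} and the refinement $\cq_{\la_3}'\subset\cq_{\la_3}$ produced by Lemma~\ref{lem: conservation of lambdas} are \emph{a priori} built in separate steps, and you should note that intersecting them still leaves a subset missing only an $R^{-\Om(\de)}$ fraction of $\cq_{\la_3}'$, not just of the larger $\cq_{\la_3}$; this holds because the conservation lemma refines cardinalities by at most $R^{O(\de)}$, but it is not automatic. The paper glosses over the same point, so it is not a gap in your argument relative to the paper, but it is the one spot where the successive refinements genuinely interact and the order of operations matters.
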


\section{First method}
\label{first-method}

The first method will handle the case $R^{1/2}\leq r\leq R^{2/3}$. Recall Lemma \ref{algebraic-lem-2} (\eqref{cs-t} in particular) that each $S_t\in\cs_t$ contains about the same amount of leaves in $\co_{leaf}^g(t)$. We have from \eqref{non-related-BLp}
\begin{align}
\label{non-related-BLp-1}
    \|Eg\|_{\BL^p(B_R)}^p&\lesssim R^{O(\de)}\sum_{S_t\in\cs_t}\sum_{\substack{O'\in\co_{leaf}^{g}(t),\\O'\subset S_t}}\|Eg_{S_t}\|_{\BL^p(O')}^p\\
    &\sim R^{O(\de)}\sum_{S_t\in\cs_t}\sum_{\substack{O'\in\co_{leaf}^{g}(t),\\O'\subset S_t}}\|Eg^{\ZT_{S_t}}\|_{\BL^p(O')}^p.
\end{align}

\subsection{Some more preparations} 

Cover each cell $S_t\in\cs_t$ with non-overlapping $r^{1/2}$-cubes $q$, and let $\bq_{S_t}$ be the collection of all these cubes. Similar to \eqref{bq-S-t},
\begin{equation}
\label{bq-S-t-2}
    |\bq_{S_t}|\lessapprox r.
\end{equation}
Note that by Lemma \ref{disjointness-of-cells}, the collections $\{\bq_{S_t}\}_{S_t}$ are finitely overlapped. Write
\begin{align}
\label{first-method-beginning}
    \|Eg\|_{\BL^p(B_R)}^p&\lesssim R^{O(\de)}\sum_{S_t\in\cs_t}\sum_{q\in\bq_{S_t}}\sum_{\substack{O'\in\co_{leaf}^{g}(t),\\O'\subset q}}\|Eg_{S_t}\|_{\BL^p(O')}^p\\
    &\sim R^{O(\de)}\sum_{S_t\in\cs_t}\sum_{q\in\bq_{S_t}}\sum_{\substack{O'\in\co_{leaf}^{g}(t),\\O'\subset q}}\|Eg^{\ZT_{S_t}}\|_{\BL^p(O')}^p.
\end{align}
By dyadic pigeonholing, we can find a dyadic number $\eta_1$, a subset $\bq_{S_t}^1\subset \bq_{S_t}$, a subset $\cs_t^1\subset\cs_t$, and a subset $\co_{leaf}^{g(1)}\subset\co_{leaf}^{g}(t)$ so that
\begin{enumerate}
    \item For all $S_t\in\cs_t^1$
    \begin{equation} 
    \label{bq-S-t-esti}
        |\bq_{S_t}^1|\sim\eta_1.
    \end{equation}
    \item Each $O'\in\co_{leaf}^{g(1)}$ is contained by some $r^{1/2}$-cube in some $\bq_{S_t}^1$, and each $q\in\bq_{S_t}^1$ contains about the same amount of leaves in $\co_{leaf}^{g(1)}$ up to a constant multiple for $S_{t}\in\cs_t^1$.
    \item It holds that
    \begin{equation}
    \label{leaf-g-2}
        |\co_{leaf}^{g(1)}|\gtrapprox|\co_{leaf}^{g}|.
    \end{equation}
\end{enumerate}
Hence we also have
\begin{equation}
\label{cs-t-2}
    |\cs_t^1|\gtrapprox|\cs_t|.
\end{equation}

\smallskip 

Recall Lemma \ref{first-method-lem}. Since each $R$-tube $T\in\ZT_g[R]$ is contained by some fat tube $\wt T\in\wt\cT^{\not\sim_{\bn}}_{k,\al,\la}(t)$, we know that each $r$-tube $T'\in\ZT_{S_t}$ with $g_{T'}\not=0$ is associated to one (and at least one) fat tube $\wt T\in\wt\cT^{\not\sim_{\bn}}_{k,\al,\la}(t)$ (see also Lemma 7.1 in \cite{Guth-II}). Thus, after deleting those $T'\in\ZT_{S_t}$ with $g_{T'}=0$, for an $S_t$ and for some $R/r^{1/2}$-ball $B\in\cb_\la^{k}(t)$ that $S_t\subset B$, the number of scale-$r$ directional caps in $\ZT_{S_t}$ is bounded above by the number of tubes in $\wt\cT^{\not\sim_{\bn}}_{k,\al,\la}(t)$ intersecting $B$, which is $\lessapprox R^{O(\de)}r^{1/4}\la^{-1}\ka_2(j)$ by Lemma \ref{first-method-lem}.

By the polynomial Wolff axiom, there is another upper bound for the number of scale-$r$ directional caps in $\ZT_{S_t}$, which is $R^{O(\de)}r^{1/2}$. This follows from Lemma 4.9  in \cite{Guth-R3} (see also Lemma \ref{polynomial-plane-lemma}). Therefore, compared to (7.26) in \cite{Wang-restriction-R3}, we obtain the following refinement: for every $S_t\subset B$ where $B\in\cb_\la^{k}(t)$, one has
\begin{equation}
    \|Eg^{\ZT_{S_t}}\|_{L^2(B_{S_t})}^2\lessapprox R^{O(\de)}\min\{r^{1/4}\la^{-1}\ka_2(j),r^{1/2}\}\sup_{\theta'\in\Theta[r]}\|Eg^{\ZT_{{S_t},\theta'}}\|_{L^2(B_{S_t})}^2,
\end{equation}
where $B_{S_t}$ is the $r$-ball containing ${S_t}$. Combining it with the broom estimate \eqref{broom-esti-2} (see also Remark \ref{broom-remark-0}) and the fact that $\ka_2(j)\geq1$,
\begin{align}
\label{refined-broom}
    \|Eg^{\ZT_{S_t}}\|_{L^2(B_{S_t})}^2&\lessapprox R^{O(\e_0)}\Big(\frac{r}{R}\Big)^{3/2}\min\{r^{1/4}\la^{-1},r^{1/2}\}\sup_{\theta'\in\Theta[r]}\|Ef_{\theta'}\|_2^2\\ \nonumber
    &\lesssim R^{O(\e_0)}\Big(\frac{r}{R}\Big)^{1/2}\min\{r^{1/4}\la^{-1},r^{1/2}\}\sup_{\theta}\|f_\theta\|_{L^2_{ave}}^2.
\end{align}
Estimate \eqref{refined-broom} works well when $\la$ is big. While when $\la$ is small, we need another argument.

\medskip

Now for a fixed $S_t\in\cs_t^1$, let us focus on 
\begin{equation}
    \|Eg_{S_t}\|_{\BL^p(S_t')}^p\sim\|Eg^{\ZT_{S_t}}\|_{\BL^p(S_t')}^p,
\end{equation}
where (see \eqref{bq-S-t-esti} for $\bq_{S_t}^1$)
\begin{equation}
\label{st'}
    S_t'=\bigcup_{q\in\bq_{S_t}^1}q.
\end{equation}
What follows is another dyadic pigeonholing. Note that the function $Eg^{\ZT_{S_t}}$ is a sum of wave packets at scale $r$ inside the $r$-ball containing the cell ${S_t}$. A heuristic understanding for  $Eg^{\ZT_{S_t}}$ is that if denoting $\ZT_{S_t}=\bigcup_{\theta'\in\Theta[r]}\ZT_{{S_t},\theta'}[r]$, then inside ${S_t}$ 
\begin{equation}
    Eg^{\ZT_{S_t}}\approx\sum_{\theta'\in\Theta[r]}\sum_{T'\in\ZT_{{S_t},\theta'}[r]} Eg_{\theta'}\Id_{T'}.
\end{equation}
For each $T'\in\ZT_{S_t}$ define
\begin{equation}
    T'(\bq_{{S_t}}^1)=|\{q\in\bq_{{S_t}}^1:q\cap T'\not=\varnothing\}|.
\end{equation}
We would like to use pigeonholing to find a fraction $\approx1$ of fat surfaces $S_t\in\cs_t^1$, so that for all tubes $T'\in\ZT_{S_t}$ the quantities $T'(\bq_{{S_t}}^1)$ are about the same up to a constant multiple. To do so, for each dyadic number $\eta_2\in[1,Cr^{1/2}]$, we partition the collection $\ZT_{S_t}$ as
\begin{equation}
    \ZT_{S_t}=\bigsqcup_{\eta_2}\ZT_{{S_t},\eta_2},
\end{equation}
where $\ZT_{{S_t},\eta_2}$ is a subcollection defined as
\begin{equation}
\label{eta-2}
    \ZT_{{S_t},\eta_2}=\{T'\in\ZT_{S_t}:T'(\bq_{S_t}^1)\sim\eta_2\}.
\end{equation}
Using this partition, we can write
\begin{equation}
    Eg^{\ZT_{S_t}}=\sum_{\eta_2}Eg^{\ZT_{S_t},\eta_2}
\end{equation}
where $g^{\ZT_{S_t},\eta_2}$ is the sum of wave packets from $\ZT_{S_t,\eta_2}$. Hence
\begin{equation}
\label{eta-2-function}
    \|Eg^{\ZT_{S_t}}\|_{\BL^p(q)}^p\lessapprox \sum_{\eta_2}\|Eg^{\ZT_{S_t},\eta_2}\|_{\BL^p(q)}^p.
\end{equation}
By pigeonholing on $\eta_2$ and dyadic pigeonholing on the value $\|Eg^{\ZT_O,\eta_2}\|_{\BL^p(q)}^p$, we can choose a uniform $\eta_2$ such that for a fraction $\gtrapprox1$ of fat surfaces $S_t\in\cs_t^1$ and a fraction $\gtrapprox1$ of $r^{1/2}$-balls $q\in \bq_{S_t}^1$, 
\begin{enumerate}
    \item $\|Eg^{\ZT_{S_t}}\|_{\BL^p(q)}^p\lessapprox \|Eg^{\ZT_{S_t},\eta_2}\|_{\BL^p(q)}^p$.
    \item For all $q$ appearing in the first item, $\|Eg^{\ZT_O,\eta_2}\|_{\BL^p(q)}^p$ are about the same up to a constant multiple.
\end{enumerate}
Recall that we have made some uniform assumptions on $|\bq_{S_t}^1|$ and the number of leaves in each $r^{1/2}$-cube in $\bq_{S_t}^1$. (see \eqref{bq-S-t-esti} and the statement below it). To avoid extra notations, let us assume the above is true for all ${S_t}\in\cs_t^1$ and all $q\in\bq_{{S_t}}^1$, without loss of generality.

Now by Lemma \ref{polynomial-plane-lemma} and Lemma \ref{planar-refinement-lem}, and the fact that broad-norm is essentially dominated by bilinear norm, one has (see \eqref{bq-S-t-esti}, \eqref{eta-2} for $\eta_1,\eta_2$ and \eqref{st'} for $S_t'$)
\begin{align}
\label{variety-esti-1-final}
    \|Eg^{\ZT_{S_t}}\|_{\BL^p({S_t'})}^p&\lesssim\sum_{q\in \bq_{S_t}^1}\|Eg^{\ZT_{S_t},\eta_2}\|_{\BL^p(q)}^p\\ \nonumber
    &\lessapprox R^{O(\de)}r^{3/2-p}\eta_1^{1-p/2}\min\{\eta_1^{p/4},\eta_2^{p/2}\}\|Eg^{\ZT_{S_t},\eta_2}\|_{L^2(B_{S_t})}^p.
\end{align}
Here $B_{S_t}$ is the $r$-ball containing ${S_t}$. Summing up all cells $S_t\in\cs_t^1$ we get
\begin{align}
\label{reduction-1}
    \sum_{S_t\in\cs_t^1}\|Eg^{\ZT_{S_t},\eta_2}\|_{\BL^p({S_t'})}^p\lessapprox R^{O(\de)}r^{3/2-p}\eta_1^{1-p/2}\!\min\{\eta_1^{p/4},\eta_2^{p/2}\}\!\!\!\sum_{{S_t}\in\cs_t^1}\!\!\!\|Eg^{\ZT_{S_t},\eta_2}\|_{L^2(B_{S_t})}^p.
\end{align}

\smallskip

We also would like to give an upper bound for $\sum_{S_t\in\cs_t^1}\|Eg^{\ZT_{S_t},\eta_2}\|_{L^2(B_{S_t})}^2$. From Lemma \ref{first-method-lem} we know that each ${S_t}\in\cs_t^1$ is contained in some $B\in\cb_\la^{k}(t)$. Write
\begin{equation}
    \sum_{S_t\in\cs_t^1}\|Eg^{\ZT_{S_t},\eta_2}\|_{L^2(B_{S_t})}^2\lesssim\sum_{B\in\cb_\la^{k}(t)}\sum_{\substack{{S_t}\in\cs_t^1,\\ {S_t}\subset B}}\|Eg^{\ZT_{S_t},\eta_2}\|_{L^2(B_{S_t})}^2.
\end{equation}
Notice that by our definition of $\ZT_{{S_t},\eta_2}$ in \eqref{eta-2}, any $r$-tube belongs to at most $r^{1/2}\eta_2^{-1}$   distinct collections $\{\ZT_{{S_t},\eta_2}\}_{S_t}$. By $L^2$-orthogonality, for each $Rr^{-1/2}$-ball $B\in\cb_\la^{k}(t)$ we have 
\begin{align}
    \sum_{\substack{S_t\in\cs_t^1,\\ S_t\subset B}}\|Eg^{\ZT_{S_t},\eta_2}\|_{L^2(B_{S_t})}^2&\lesssim r^{1/2}\eta_2^{-1}\|Eg^{\ZT_{S_t},\eta_2}\|_{L^2(B)}^2\\
    &\lesssim r^{1/2}\eta_2^{-1}\sum_{T\in\ZT_g[R]} \|Ef_T\|_{L^2(B)}^2.
\end{align}
Since each $T\in\ZT_g[R]$ intersects at most $\la r^{1/2}$ balls in $\cb_\la^{k}(t)$ (see Lemma \ref{first-method-lem}), we can sum up all $B\in\cb_\la^{k}(t)$ and get
\begin{equation}
\label{l2-shading}
    \sum_{{S_t}\in\cs_t^1}\|Eg^{\ZT_{S_t},\eta_2}\|_{L^2(B_{S_t})}^2\lesssim r^{1/2}\eta_2^{-1}\la\|Ef\|_2^2.
\end{equation}
This suggests a gain when $\la$ is small.

\subsection{Wrap up}

Finally, let us wrap up all the information we get so far. We are going to estimate $\|Eg\|_{\BL^p(B_R)}$ in three ways. from \eqref{cs-t} and \eqref{cs-t-2} we know that $|\cs_t^1|\gtrapprox R^{-O(\de)}|\cs_t|\gtrapprox R^{-O(\de)}D^3$.

\subsubsection{Case 1.}

Recall \eqref{reduction-1}, \eqref{l2-shading} and \eqref{refined-broom} (recall also \eqref{st'}). Since $\ZT_{{S_t},\eta_2}\subset\ZT_{S_t}$ (see \eqref{eta-2} for $\ZT_{{S_t},\eta_2}$), we have $\|Eg^{\ZT_{S_t},\eta_2}\|_{{L^2(B_{S_t})}}^2\lesssim\|Eg^{\ZT_{S_t}}\|_{{L^2(B_{S_t})}}^2$. Hence
\begin{align}
\nonumber
    &\sum_{{S_t}\in\cs_t^1}\|Eg^{\ZT_{S_t},\eta_2}\|_{\BL^p({S_t'})}^p\lessapprox R^{O(\de)}r^{3/2-p}\eta_1^{1-p/2}\min\{\eta_1^{p/4},\eta_2^{p/2}\}\!\!\!\sum_{{S_t}\in\cs_t^1}\!\!\!\|Eg^{\ZT_{S_t},\eta_2}\|_{L^2(B_{S_t})}^p\\ \nonumber
    &\lessapprox R^{O(\de)}r^{3/2-p}\eta_1^{1-p/2}\min\{\eta_1^{p/4},\eta_2^{p/2}\}\!\!\sum_{{S_t}\in\cs_t^1}\!\!\|Eg^{\ZT_{S_t},\eta_2}\|_{{L^2(B_{S_t})}}^2\sup_{{S_t}}\|Eg^{\ZT_{S_t},\eta_2}\|_{{L^2(B_{S_t})}}^{p-2}\\ \nonumber
    &\lesssim R^{O(\de)}C_1(R,r,\vec{\eta})\|f\|_2^{2}\sup_{\theta}\|f_{\theta}\|_{L^2_{ave}}^{p-2},
\end{align}
where, by \eqref{refined-broom} and \eqref{l2-shading} and Plancherel,
\begin{align}
    C_1(R,r,\vec{\eta})=&\, r^{3/2-p}\eta_1^{1-p/2}\min\{\eta_1^{p/4},\eta_2^{p/2}\}\cdot r^{1/2}\eta_2^{-1} \la R\\ \nonumber
    &\cdot \Big(\Big(\frac{r}{R}\Big)^{1/2}\min\{r^{1/4}\la^{-1},r^{1/2}\}\Big)^\frac{p-2}{2}.
\end{align}
Noticing $\min\{\eta_1^{p/4},\eta_2^{p/2}\}\leq \eta_1^{\frac{p-2}{4}}\eta_2$, we can simplify and reorder the terms in $C_1(R,r,\vec{\eta})$ to get
\begin{equation}
\label{C1}
    C_1(R,r,\vec{\eta})=Rr^{2-p}\la\eta_1^{\frac{2-p}{4}}\cdot\min\{r^{\frac{3}{4}}\la^{-1}R^{-\frac{1}{2}},rR^{-\frac{1}{2}}\}^\frac{p-2}{2}.
\end{equation}
It follows from  \eqref{first-method-beginning} and \eqref{eta-2-function} that
\begin{equation}
    \|Eg\|_{\BL^p(B_R)}^p\lesssim R^{O(\de)}C_1(R,r,\vec{\eta})\|f\|_2^{2}\sup_{\theta}\|f_{\theta}\|_{L^2_{ave}}^{p-2}.
\end{equation}

\subsubsection{Case 2.}

Recall the definition of $Eg_{S_t}$ in \eqref{cell-function} (see also \eqref{non-related-BLp}). By H\"older's inequality (a similar argument is given in \eqref{planar-refinement-1}) 
\begin{equation}
\label{old-lp-l2}
    \|Eg_{S_t}\|_{\BL^p(S_t')}^p\lessapprox R^{O(\de)}r^{3/2-p}\eta_1^{1-p/4}\|Eg_{S_t}\|_{L^2(B_{S_t})}^p,
\end{equation}
which yields (see \eqref{st'} for $S_t'$), by the fact that $\|Eg_{S_t}\|_2^2\lesssim r\|g_{S_t}\|_2^2$,
\begin{equation}
\label{reduction-2}
    \|Eg_{S_t}\|_{\BL^p({S_t'})}^2\lessapprox R^{O(\de)}r^{3/p-1}\eta_1^{2/p-1/2}\|g_{S_t}\|_2^2.
\end{equation}

The second comes from combining \eqref{reduction-1} and \eqref{reduction-2}. Note that
\begin{equation}
\label{cell-functions-split}
    \|Eg_{S_t}\|_{\BL^p(S_t')}^p\lesssim\|Eg_{S_t}\|_{\BL^p(S_t')}^2\|Eg^{\ZT_{S_t}}\|_{\BL^p(S_t')}^{p-2}.
\end{equation}
By \eqref{variety-esti-1-final} (with $\min\{\eta_1^{p/4},\eta_2^{p/2}\}\leq\eta_1^{p/4}$) and \eqref{reduction-2} one has
\begin{equation}
    \sum_{{S_t}\in\cs_t^1}\|Eg_{S_t}\|_{\BL^p(S_t')}^p\lessapprox R^{O(\de)} r^{\frac{5-2p}{2}}\eta_1^{\frac{4-p}{4}}\sum_{{S_t}\in\cs_t^1}\|g_{S_t}\|_{2}^2\|Eg^{\ZT_{S_t}}\|_{L^2(B_{S_t})}^{p-2}.
\end{equation}
From \eqref{polynomial-l2}, \eqref{non-related-BLp-1}, and \eqref{refined-broom} we thus have
\begin{align}
    \|Eg\|_{\BL^p(B_R)}^p\lessapprox  R^{O(\de)}C_2(R,r,\vec{\eta})\|f\|_2^{2}\sup_{\theta'}\|f_{\theta'}\|_{L^2_{ave}}^{p-2},
\end{align}
where
\begin{align}
\label{C2}
    C_2(R,r,\vec{\eta})=&Dr^{\frac{5-2p}{2}}\eta_1^{\frac{4-p}{4}}\cdot \min\{r^{\frac{3}{4}}\la^{-1}R^{-\frac{1}{2}},rR^{-\frac{1}{2}}\}^\frac{p-2}{2}.
\end{align}

\subsubsection{Case 3.}

One one hand, from \eqref{polynomial-l2}, \eqref{old-lp-l2}, and pigeonholing on $S_t$ we have (one could compare it with (7.38) in \cite{Wang-restriction-R3})
\begin{equation}
\label{case3-1}
    \|Eg\|_{\BL^p(B_R)}^p\lesssim R^{O(\de)}\sum_{S_t}\|Eg_{S_t}\|_{\BL^p({S_t'})}^p\lessapprox R^{O(\de)}D^{3-p}r^{\frac{3-p}{2}}\eta_1^{\frac{4-p}{4}}\|f\|_2^{p}.
\end{equation}

On the other hand, note that \eqref{old-lp-l2} is also true if $Eg_{S_t}$ is replaced by $Eg^{\ZT_{S_t}}$. Thus, from \eqref{polynomial-l2}, \eqref{old-lp-l2}, \eqref{cell-functions-split}, and the broom estimate \eqref{broom-esti-1} we have (one could compare it with (7.32) in \cite{Wang-restriction-R3})
\begin{align}
\label{case3-2}
    \|Eg\|_{\BL^p(B_R)}^p &\,\lesssim R^{O(\de)}\sum_{S_t}\|Eg_{S_t}\|_{\BL^p({S_t'})}^2\sup_{S_t}\|Eg^{\ZT_{S_t}}\|_{\BL^p({S_t'})}^{p-2}\\ \nonumber
    &\,\lesssim R^{O(\de)} DR^{\frac{2-p}{4}}r^{\frac{3-p}{2}}\eta_1^{\frac{4-p}{4}}\|f\|_2^{2}\sup_{\theta}\|f_{\theta}\|_{L^2_{ave}}^{p-2}.
\end{align}
Combining the above two estimates we finally get
\begin{equation}
    \|Eg\|_{\BL^p(B_R)}^p\lesssim R^{O(\de)} C_3(R,r,\vec{\eta})\|f\|_2^{2}\sup_{\theta}\|f_{\theta}\|_{L^2_{ave}}^{p-2},
\end{equation}
where
\begin{equation}
\label{C3}
    C_3(R,r,\vec{\eta})=\min\{D^{3-p},DR^{\frac{2-p}{4}}\}r^{\frac{3-p}{2}}\eta_1^{\frac{4-p}{4}}.
\end{equation}

\subsection{Numerology}

Recall \eqref{C1}, \eqref{C2} and \eqref{C3}. One wants to find the smallest $p$ so that the minimum of the following system is bounded above by $O(1)$
\begin{equation}
\label{system-4}
    \begin{cases}
    Rr^{2-p}\la\eta_1^{\frac{2-p}{4}}\cdot\min\{r^{\frac{3}{4}}\la^{-1}R^{-\frac{1}{2}},rR^{-\frac{1}{2}}\}^\frac{p-2}{2}\\[2ex]
    Dr^{\frac{5-2p}{2}}\eta_1^{\frac{4-p}{4}}\cdot \min\{r^{\frac{3}{4}}\la^{-1}R^{-\frac{1}{2}},rR^{-\frac{1}{2}}\}^\frac{p-2}{2}\\[2ex]
    \min\{D^{3-p},DR^{\frac{2-p}{4}}\}r^{\frac{3-p}{2}}\eta_1^{\frac{4-p}{4}}
    \end{cases}
\end{equation}
Simplify \eqref{system-4} by multiplying the first two terms and by using
\begin{equation}
    \min\{r^{\frac{3}{4}}\la^{-1}R^{-\frac{1}{2}},rR^{-\frac{1}{2}}\}^{p-2}\leq (r^{\frac{3}{4}}\la^{-1}R^{-\frac{1}{2}})(rR^{-\frac{1}{2}})^{p-3}
\end{equation}
so that we could get rid of the factor $\la$ and have
\begin{equation}
\label{system-1-1}
    \begin{cases}
    I=DR^\frac{4-p}{2}r^\frac{9-4p}{4}\eta_1^\frac{3-p}{2}\\[2ex]
    II=\min\{D^{3-p},DR^{\frac{2-p}{4}}\}r^{\frac{3-p}{2}}\eta_1^{\frac{4-p}{4}}
    \end{cases}
\end{equation}
Now by using $\min\{D^{3-p},DR^{\frac{2-p}{4}}\}\leq (D^{3-p})^t(DR^{\frac{2-p}{4}})^{1-t}$ with $t=\frac{3p-5}{(6p-15)(p-2)}$ and $\eta_1\lesssim r$ (see \eqref{bq-S-t-2} and \eqref{bq-S-t-esti}), one gets
\begin{equation}
    I\times II^\frac{6p-15}{10-3p}\lesssim R^\frac{14p-45}{12p-40},
\end{equation}
which is bounded above by 1 when $10/3>p>45/14$.

In conclusion, we have when $\log r/\log R\leq 2/3$ and $p>45/14$,
\begin{equation}
\label{result-1}
    \|Eg\|_{\BL^p(B_R)}^p\lesssim R^{O(\de)}\|f\|_2^{2}\sup_{\theta}\|f_{\theta}\|_{L^2_{ave}}^{p-2}.
\end{equation}

\section{Second method}
\label{second-method}

The second method will handle the case $r\geq R^{2/3}$ ($r=r_t$). Roughly speaking, we will use a square function estimate if each ``admissible" $R^{1/2}$-ball is ``associated to" a lot of fat surfaces $S_t$. Otherwise, we use the refined decoupling theorem.

Recall Section \ref{first-sorting} and Lemma \ref{second-method-lem}. Since $|\co_{leaf}^g(t)|\gtrapprox R^{-O(\e_0)}|\co_{leaf}|$ (see \eqref{leaf-g} and \eqref{leaf-g-t}), let us assume without loss of generality that each $q\in\bq_{S_t}$ contains about the same amount of leaves in $\cq_{leaf}^g(t)$ up to a constant multiple (see also Lemma \ref{lem: conservation of lambdas}). As a result, $\sum_{O\in\co_{leaf}^g,\,O'\subset q,\,q\in\bq_{S_t,Q}}\|Eg\|_{\BL^p(O')}^p$ are about the same up to a constant multiple for all $S_t\in\cs_t, Q\in\cq_{\la_3,\la}^{k}$. Note that by Lemma~\ref{lem: conservation of lambdas}, after further refinement, the parameters $\lambda_j$ change by at most a factor of $R^{4\delta}$.

Go back to \eqref{non-related-BLp}. Write
\begin{align}
\label{second-method-beginning}
    \|Eg\|_{\BL^p(B_R)}^p&\lesssim R^{O(\de)}\sum_{S_t\in\cs_t}\sum_{q\in\bq_{S_t}}\sum_{\substack{O'\in\co_{leaf}^{g}(t),\\O'\subset q}}\|Eg_{S_t}\|_{\BL^p(O')}^p\\ \nonumber
    &\sim R^{O(\de)}\sum_{S_t\in\cs_t}\sum_{q\in\bq_{S_t}}\sum_{\substack{O'\in\co_{leaf}^{g}(t),\\O'\subset q}}\|Eg^{\ZT_{S_t}}\|_{\BL^p(O')}^p.
\end{align}

\subsection{Case one} 
By Lemma \ref{second-method-lem} (item (7) in particular), we have from \eqref{second-method-beginning} that
\begin{equation}
\label{case-1}
    \|Eg\|_{\BL^p(B_R)}^p\lessapprox R^{O(\de)}\sum_{Q\in\cq_{\la_3,\la}^{k}}\sum_{S_t\in\cs_t(Q)}\sum_{q\in\bq_{S_t,Q}}\|Eg^{\ZT_{S_t}}\|_{\BL^p(q)}^p.
\end{equation}
For a fixed $Q\in\cq_{\la_3,\la}^{k}$ and a fixed $S_t\in\cs_t(Q)$, let us focus on 
\begin{equation}
    \sum_{q\in\bq_{S_t,Q}}\|Eg^{\ZT_{S_t}}\|_{\BL^p(q)}^p.
\end{equation}

What follows is another dyadic pigeonholing similar to the one near \eqref{eta-2}. For each dyadic number $\la_5\in[1,C(R/r)^{1/2}]$, we partition the collection $\ZT_{S_t}$ as
\begin{equation}
    \ZT_{S_t}=\bigsqcup_{\la_5}\ZT_{{S_t},\la_5},
\end{equation}
where $\ZT_{{S_t},\la_5}$ is a subcollection defined as (see Lemma \ref{second-method-lem} for $\bq_{S_t,Q}$) 
\begin{equation}
\label{la-5}
    \ZT_{{S_t},\la_5}=\ZT_{{S_t},\la_5}(Q)=\{T'\in\ZT_{S_t}:T'(\bq_{S_t,Q})\sim\la_5\}
\end{equation}
and $T'(\bq_{S_t,Q})$ is defined as
\begin{equation}
    T'(\bq_{S_t,Q})=|\{q\in\bq_{S_t,Q}:q\cap (T'\cap Q)\not=\varnothing\}|.
\end{equation}
Using this partition, inside $Q$ we can write
\begin{equation}
    Eg^{\ZT_{S_t}}=\sum_{\la_5}Eg^{\ZT_{{S_t},\la_5}}.
\end{equation}
Since there are at most $O(\log R)$     choices of $\la_5$, one gets
\begin{equation}
    \|Eg^{\ZT_{S_t}}\|_{\BL^p(q)}^p\lessapprox    \sum_{\la_5}\|Eg^{\ZT_{{S_t},\la_5}}\|_{\BL^p(q)}^p.
\end{equation}
By dyadic pigeonholing, we can choose a uniform $\la_5$ such that for a fraction $\gtrapprox1$ of $S_t\in\cs_t$, $Q\in\cq_{\la_3,\la}^{k}$, and $q\in \bq_{S_t,Q}$, $\|Eg^{\ZT_{S_t}}\|_{\BL^p(q)}^p\lessapprox    \|Eg^{\ZT_{S_t,\la_5}}\|_{\BL^p(q)}^p$.

By pigeonholing again, for a $\gtrapprox 1$ fraction of the remaining $\mathcal{S}_t$ and $\mathcal{Q}^k_{\lambda_3, \lambda}$, the quantity $\|Eg^{\ZT_{S_t,\la_5}}\|_{L^2(Q)}$ are about the same. We restrict our attention on those $S_t$ and $Q$. Recall that by Lemma~\ref{lem: conservation of lambdas}, after further refinement (see also the uniform assumption above \eqref{second-method-beginning}), the parameters $\lambda_j$ change by at most a factor of $R^{4\delta}$.

Now by Lemma \ref{polynomial-plane-lemma} and Lemma \ref{planar-refinement-lem} (in fact we only use \eqref{planar-refinement-2}), and the fact that broad-norm is essentially dominated by bilinear norm, one has
\begin{equation}
\label{variety-esti-2-final}
    \sum_{q\in \bq_{S_t,Q}}\|Eg^{\ZT_{S_t}}\|_{\BL^p(q)}^p\lessapprox R^{O(\de)} r^{3/2-p/2}R^{-p/4}\la_2^{1-p/2}\la_5^{p/2}\|Eg^{\ZT_{{S_t},\la_5}}\|_{L^2(Q)}^p.
\end{equation}
Here $\la_2$ is given in Lemma \ref{second-method-lem} ($|\bq_{S_t,Q}|\sim \la_2$), and $\la_5$ is defined in \eqref{la-5}.  Summing up all $Q\in\cq_{\la_3,\la}^{k}$ and all $O\in\cs_t(Q)$, we get via \eqref{case-1} that
\begin{align}
    \|Eg\|_{\BL^p(B_R)}^p\lessapprox R^{O(\de)} & r^{3/2-p/2}R^{-p/4}\la_2^{1-p/2}\la_5^{p/2}\\ \nonumber
    & \cdot\sum_{Q\in\cq_{\la_3,\la}^{k}}\sum_{S_t\in\cs_t(Q)}\|Eg^{\ZT_{S_t,\la_5}}\|_{L^2(Q)}^p.
\end{align}
Since $\|Eg^{\ZT_{{S_t},\la_5}}\|_{L^2(Q)}$ are about the same for all $S_t,Q$ and since $|\cs_t(Q)|\sim \la_3$ (see \eqref{la-3-t} and Lemma \ref{second-method-lem}),  we further have
\begin{align}
\label{main-reduction-1-1}
    \|Eg\|_{\BL^p(B_R)}^p&\lessapprox r^{3/2-p/2}R^{-p/4}\la_2^{1-p/2}\la_5^{p/2}\\ \nonumber
    & \cdot (\la_3|\cq_{\la_3,\la}^{k}|)^{1-p/2} \Big(\sum_{Q\in\cq_{\la_3,\la}^{k}}\sum_{S_t\in\cs_t(Q)}\|Eg^{\ZT_{S_t,\la_5}}\|_{L^2(Q)}^2\Big)^{p/2}.
\end{align}

\smallskip

Next, we would like to bound $\sum_{Q\in\cq_{\la_3,\la}^{k}}\sum_{S_t\in\cs_t(Q)}\|Eg^{\ZT_{{S_t},\la_5}}\|_{L^2(Q)}^2$. For a fixed $R^{1/2}$-cube  $Q\in\cq_{\la_3,\la}^{k}$, here is useful observation:  Each $r$-tube $T'$ can belong to at most $(Rr^{-1})^{1/2}\la_5^{-1}$ many $\{\ZT_{{S_t},\la_5}\}_{{S_t}\in\cs_t(Q)}$. This is because on one hand if $T'\in\ZT_{{S_t},\la_5}$, then $T'$ intersects at least $\la_5$ many $r^{1/2}$-cubes in ${S_t}\cap Q$ (equivalently, in $\bq_{S_t,Q}$). While on the other hand each $T'$ can intersect at most $(Rr^{-1})^{1/2}$ distinct $r^{1/2}$-cubes in $Q$.

This leads to 
\begin{align}
    \sum_{S_t\in\cs_t(Q)}\|Eg^{\ZT_{O,\la_5}}\|_{L^2(Q)}^2\lesssim&\sum_{S_t\in\cs_t(Q)}\sum_{\theta'\in\Theta[r]}\sum_{T'\in\ZT_{S_t,\la_5,\theta'}[r]}\int_Q|Eg_{\theta'}|^2\Id_{T'}   \\ \nonumber
    \lesssim& (Rr^{-1})^{1/2}\la_5^{-1}\sum_{\theta\in\Theta[r]}\int_Q|Eg_{\theta'}|^2.
\end{align}
Thus, after summing over $Q\in\cq_{\la_3,\la}^{k}$ we have
\begin{align}
\nonumber
    \sum_{Q\in\cq_{\la_3,\la}^{k}}\sum_{S_t\in\cs_t(Q)}\|Eg^{\ZT_{S_t,\la_5}}\|_{L^2(Q)}^2   \lessapprox&\,(Rr^{-1})^{1/2}\la_5^{-1}\\ \label{R-half-reduction-1}
    &\cdot\sum_{Q\in\cq_{\la_3,\la}^{k}}\sum_{\theta'\in\Theta[r]}\int_Q|Eg_{\theta'}|^2.
\end{align}
Inside each $Q$, invoke the $L^2$ orthogonality so that 
\begin{equation}
    \eqref{R-half-reduction-1}\lesssim\sum_{Q\in\cq_{\la_3,\la}^{k}}\sum_{T\in\ZT_{g}[R]}\int_Q|Ef_T|^2,
\end{equation}
which, via H\"older's inequality, is bounded above by 
\begin{equation}
    (|Q|\cdot|\cq_{\la_3,\la}^{k}|)^{1-2/p}\Big(\int_{\bigcup_{\cq_{\la_3,\la}^{k}}}\Big(\sum_{T\in\ZT_{g}[R]}|Ef_T|^2\Big)^{p/2}\Big)^{2/p}.
\end{equation}
Therefore, combine the above calculations with \eqref{main-reduction-1-1} so that
\begin{align}
\label{main-reduction-1-2}
     \|Eg&\|_{\BL^p(B_R)}^p\lessapprox r^{3/2-p/2}R^{-p/4}\la_2^{1-p/2}\la_3^{1-p/2}(Rr^{-1})^{p/4}R^{\frac{3(p-2)}{4}}\\ \label{incidence-esti-1}
    & \cdot\int_{\bigcup_{\cq_{\la_3,\la}^{k}}}\Big(\sum_{T\in\ZT_{g}[R]}|Ef_T|^2\Big)^{p/2}.
\end{align}

To bound \eqref{incidence-esti-1}, notice that for each $R^{-1/2}$-cap $\theta$ (see \eqref{la-1} for $\la_1$)
\begin{equation}
\label{l2-equivalence-2}
    \la_1\|Ef_{T}\|_2^2\sim\sum_{T\in\ZT_{\la_1,\theta}}\|Ef_{T}\|_2^2\lesssim R\sum_{T\in\ZT_{\la_1,\theta}}\|f_{T}\|_2^2\lesssim R\|f_{\theta}\|_2^2\lesssim \sup_{\theta}\|f_{\theta}\|_{L^2_{ave}}^2.
\end{equation}
Since we already assume above \eqref{la-1} that $\|f_T\|_2$ are all about the same, so by \eqref{l2-equivalence-2} 
\begin{equation}
    \|Ef_T\|_\infty^2\lesssim\|Ef_{T}\|_2^2/|T|\lesssim\frac{\|f_{\theta}\|_2^{4/p}\sup_{\theta}\|f_{\theta}\|_{L^2_{ave}}^{2-4/p}R^{2/p}}{R^{2}\la_1},
\end{equation}
giving the reduction 
\begin{align}
\label{kakeya-type-reduction}
   &\int_{\bigcup_{\cq_{\la_3,\la}^{k}}}\Big(\sum_{T\in\ZT_{g}[R]}|Ef_T|^2\Big)^{p/2}\\ \nonumber &\lesssim\frac{\|f_{\theta}\|_2^{2}\sup_{\theta}\|f_{\theta}\|_{L^2_{ave}}^{p-2}}{R^{p-1}\la_1^{p/2}}   \int_{\bigcup_{\cq_{\la_3,\la}^{k}}}\Big(\sum_{T\in\ZT_{g}[R]}\Id_{T}\Big)^{p/2}.
\end{align}

Finally, recall Lemma \ref{second-method-lem} that each $Q\in\cq_{\la_3,\la}^{k}$ intersects $\lessapprox R^{O(\de)} R^{1/4}\la^{-1}\la_1$ tubes $T\in\ZT_{g}[R]$ and recall \eqref{l1-estimate} (see \eqref{g} again). Hence 
\begin{equation}
    \int_{\bigcup_{\cq_{\la_3,\la}^{k}}}\Big(\sum_{T\in\ZT_{g}[R]}\Id_{T}\Big)^{p/2}\lesssim R^{11/4+p/8}\la^{2-p/2}\la_1^{p/2}(|\Theta_{\la_1}[R]|/R).
\end{equation}
Plug this back to \eqref{kakeya-type-reduction} so that
\begin{align}
\label{kakeya-type-esti-1}
    \int_{\bigcup_{\cq_{\la_3,\la}^{k}}}\Big(\sum_{T\in\ZT_{g}[R]}|Ef_T|^2\Big)^{p/2}\lesssim R^{\frac{p-2}{8}}R^{3-p}\la^{2-p/2}\|f\|_2^{2}\sup_{\theta}\|f_{\theta}\|_{L^2_{ave}}^{p-2},
\end{align}
which, together with \eqref{main-reduction-1-2}, yields our first estimate
\begin{equation}
\label{main-esti-1}
    \|Eg\|_{\BL^p(B_R)}^p\lessapprox R^{O(\de)} C_4(R,r,\vec{\la})\|f\|_2^{2}\sup_{\theta'}\|f_{\theta}\|_{L^2_{ave}}^{p-2},
\end{equation}
where the quantity $C_4(R,r,\vec{\la})$ is given as
\begin{align}
\label{C4}
    C_4(R,r,\vec{\la})=&r^{3/2-p/2}R^{-p/4}\la_2^{1-p/2} \la_3^{1-p/2}(Rr^{-1})^{p/4}R^{\frac{3(p-2)}{4}}R^{\frac{p-2}{8}}R^{3-p}\la^\frac{4-p}{2}. 
\end{align}

\smallskip

The first estimate \eqref{main-esti-1} works well when $\la_3$ is large (recall \eqref{la-3-t} that $\la_3$ denotes the number of fat surfaces $S_t$ associated to an $R^{1/2}$-ball). While if $\la_3$ is small, we need another estimate.

\subsection{Case two} Our second estimate uses the refined decoupling in \cite{GIOW} (see \cite{Bourgain-Demeter-l2} for the original Bourgain-Demeter decoupling theorem). 
\begin{theorem}[\cite{GIOW} Theorem 4.2]
\label{refined-decoupling-thm}
Let $2\leq p\leq 4$. Suppose $h$ is a sum of scale-$R$ wave packets $h=\sum_{T\in\ZW}f_T$ so that $\|Ef_T\|_{L^2(B_R)}^2$ are about the same up to a constant multiple. Let $Y$ be a union of $R^{1/2}$-balls in $B_R$ such that each $R^{1/2}$-ball $Q\subset Y$ intersects to at most $M$ tubes from $T\in\ZW$. Then there exists $\be\ll\e'\ll\e$ (for example, $\e'=\be^{1/2}\ll\e_0$) so that
\begin{equation}
\label{refined-decoupling}
    \|Eh\|_{L^p(Y)}\leq R^{\e'}\Big(\frac{M}{|\ZW|}\Big)^{\frac{1}{2}-\frac{1}{p}}\Big(\sum_{T\in\ZW}\|Ef_T\|_p^2\Big)^{1/2 }.
\end{equation}
\end{theorem}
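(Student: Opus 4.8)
The plan is to prove the refined decoupling theorem (Theorem \ref{refined-decoupling-thm}) by combining the flat $\ell^2$-decoupling theorem of Bourgain--Demeter with a pigeonholing/induction-on-scales argument that converts the gain from the small intersection multiplicity $M$ into a power of $M/|\ZW|$. Since this is a known result quoted verbatim from \cite{GIOW}, the proof here is a recollection of their argument, which I would present as follows.

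\textbf{Step 1: Parabolic rescaling and flat decoupling at an intermediate scale.} First I would partition $B^2(0,1)$ into finitely overlapping $R^{-1/4}$-caps $\tau$, so that each $\tau$ is itself a union of $\sim R^{1/4}$ of the $R^{-1/2}$-caps $\theta$. Applying the Bourgain--Demeter $\ell^2$-decoupling theorem \cite{Bourgain-Demeter-l2} at scale $R^{1/2}$ (so that the relevant neighborhood of the paraboloid has thickness $R^{-1/2}$ and the caps are $R^{-1/4}$), one gets, for $2 \le p \le 6$,
\begin{equation}
    \|Eh\|_{L^p(w_{B_R})} \lesssim R^{\e'/2}\Big(\sum_\tau \|Eh_\tau\|_{L^p(w_{B_R})}^2\Big)^{1/2},
\end{equation}
where $h_\tau = \sum_{T : \theta(T) \subset \tau} f_T$ and $w_{B_R}$ is a suitable weight. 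The key point is that each $Eh_\tau$ is locally (on $R^{1/2}$-balls) constant after the appropriate rescaling: on each $R^{1/2}$-ball $Q$, $|Eh_\tau|$ is essentially constant, and the wave packets inside $\tau$ which meet $Q$ are few in number because $Q$ meets at most $M$ tubes of $\ZW$ in total.

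\textbf{Step 2: Exploiting the multiplicity bound $M$ on each $R^{1/2}$-ball.} This is the heart of the argument and the main obstacle. For a fixed $R^{1/2}$-ball $Q \subset Y$, let $\ZW_Q = \{T \in \ZW : T \cap Q \ne \varnothing\}$, so $|\ZW_Q| \le M$ by hypothesis. On $Q$, only the wave packets in $\ZW_Q$ contribute, and by $L^2$-orthogonality together with the fact that $\|Ef_T\|_{L^2(B_R)}^2$ are all comparable, one can bound $\int_Q |Eh|^p$ by interpolating between an $L^2$ estimate (which sees $|\ZW_Q| \le M$ terms) and an $L^\infty$ estimate. Iterating the decoupling from $R^{1/2}$ down through dyadic scales (or equivalently running the standard multilinear-to-linear and induction-on-scales machinery of \cite{Bourgain-Demeter-l2}), and at each stage only ever summing over the $\le M$ tubes actually present in $Q$, the number of caps one pays for in the $\ell^2 \to \ell^p$ passage is $M$ rather than $|\ZW|$. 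Tracking the exponents, each application of decoupling at a scale-$\rho$ to scale-$\rho^{1/2}$ step contributes a factor controlled by $(\#\text{caps})^{1/2 - 1/p}$, and since the relevant count is $M$ on each small ball, summing over all $Q \subset Y$ and using that the total mass is spread over $|\ZW|$ tubes gives the factor $(M/|\ZW|)^{1/2 - 1/p}$.

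\textbf{Step 3: Assembling the estimate.} After carrying out the induction, one collects the contributions of all $R^{1/2}$-balls $Q$ forming $Y$: since distinct $Q$ are disjoint, $\|Eh\|_{L^p(Y)}^p = \sum_{Q} \|Eh\|_{L^p(Q)}^p$, and each summand is bounded using Steps 1--2. Finally, converting the $L^p(w_{B_R})$ localized norms $\|Ef_T\|_{L^p(w_{B_R})}$ back to the global $\|Ef_T\|_p$ (absorbing the rapidly decaying tails of the weights into the $R^{\e'}$ loss, as permitted by the conventions in the Notations section of this paper), and using the comparability of the $\|Ef_T\|_{L^2}$ to match the normalization, yields exactly
\begin{equation}
    \|Eh\|_{L^p(Y)} \le R^{\e'}\Big(\frac{M}{|\ZW|}\Big)^{\frac{1}{2} - \frac{1}{p}}\Big(\sum_{T \in \ZW} \|Ef_T\|_p^2\Big)^{1/2},
\end{equation}
for $2 \le p \le 4$. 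The restriction $p \le 4$ (rather than $p \le 6$ as in flat decoupling) comes from the step where one must pass to the true $L^p$ norms $\|Ef_T\|_p$ on the right-hand side and keep the gain $M/|\ZW|$ with the correct power; the range $p\le 4$ is where the $L^2$-based orthogonality on each $R^{1/2}$-ball is strong enough. The main technical obstacle is precisely the bookkeeping in Step 2 — ensuring that the multiplicity $M$, rather than the full cardinality, governs the loss at every scale of the induction — but this is exactly what the refined decoupling theorem of \cite{GIOW} (and Du--Zhang) establishes, so I would cite that argument rather than reproduce it in full.
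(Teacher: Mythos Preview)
The paper does not prove this theorem at all: it is stated as Theorem~4.2 of \cite{GIOW} (with independent attribution to Du--Zhang) and simply invoked as a black box in Section~\ref{second-method}. You correctly recognize this and offer a sketch of the GIOW argument rather than a new proof, so there is nothing in the paper to compare your proposal against; your decision to cite \cite{GIOW} for the details is exactly what the paper itself does.
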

We know from Lemma \ref{second-method-lem} item (8) and \eqref{non-related-BLp}
\begin{align}
    \|Eg\|_{\BL^p(B_R)}^p\lesssim R^{O(\de)} \sum_{Q\in\bar\cq_{\la_3,\la}^{k}}\|Eg\|_{L^p(Q)}^p.
\end{align}
Also, again from Lemma \ref{second-method-lem} item (8) we know that for each $Q\in\bar\cq_{\la_3,\la}^{k}$, there are at most $\sim\mu\lessapprox R^{O(\de)} R^{1/4}\la^{-1}\la_1$ many $R$-tubes in $\ZT_g[R]$ that intersect it. Hence by the refined decoupling \eqref{refined-decoupling} one has (recall \eqref{la-1} for the definition of $\la_1$) 
\begin{equation}
    \sum_{Q\in\bar\cq_{\la_3,\la}^{k}}\|Eg\|_{L^p(Q)}^p\lesssim R^{O(\de)} (\mu\la_1^{-1}|\Theta_{\la_1}[R]|^{-1})^{p/2-1}\Big(\sum_T\|Ef_T\|_p^2\Big)^{p/2}.
\end{equation}
By Bernstein's inequality and Plancherel,
\begin{equation}
    \Big(\sum_T\|Ef_T\|_p^2\Big)^{p/2}\lesssim R^{2-p}\Big(\sum_T\|Ef_T\|_2^2\Big)^{p/2}\lesssim R^{2-p/2}\Big(\sum_T\|f_T\|_2^2\Big)^{p/2},
\end{equation}
which is bounded above by
\begin{equation}
    R^{3-p}|\Theta_{\la_1}'[R]|^{(p-2)/2}\|f\|_2^2\sup_{\theta'}\|f_{\theta'}\|_{L^2_{ave}}^{p-2}.
\end{equation}

Thus, the above calculations give
\begin{equation}
\label{main-esti-2}
    \|Eg\|_{\BL^p(B_R)}^p\lessapprox R^{O(\de)} C_5(R,r,\vec{\la})\|f\|_2^{2}\sup_{\theta'}\|f_{\theta'}\|_{L^2_{ave}}^{p-2},
\end{equation}
where the quantity $C_5(R,r,\vec{\la})$ has the expression
\begin{equation}
    C_5(R,r,\vec{\la})=(\mu\la_1^{-1})^{p/2-1}R^{3-p}.
\end{equation}

\subsection{Case three}

Our third and final estimate use the information on $\bq_{S_t}$ (see Lemma \ref{second-method-lem}, in particular, $|\bq_{S_t}|\lesssim\la_2\la_6$). On one hand, for each fat surface $S_t$, by Lemma \ref{polynomial-plane-lemma},  Lemma \ref{planar-refinement-lem} (only \eqref{planar-refinement-1}), and the fact that broad-norm is essentially dominated by bilinear norm, one has
\begin{equation}
    \sum_{q\in\bq_{S_t}}\|Eg^{\ZT_{S_t}}\|_{\BL^p(q)}^p\lesssim (\la_2\la_6)^{1-p/4}r^{\frac{3-p}{2}}r^{-p/2}\|Eg^{\ZT_{S_t}}\|_{L^2(B_{S_t})}^p.
\end{equation} 
Here we use the estimate $|\bq_{S_t}|\lesssim\la_2\la_6$, which is given in the line below \eqref{la-6-t}. Note that by definition the function $g^{\ZT_{S_t}}$ is a sum of all tangential wave packets intersecting $S_t$, so we cannot use \eqref{polynomial-l2} to sum up $\|g^{\ZT_{S_t}}\|_2^2$ for all fat surfaces $S_t$.

On the other hand, we similarly have (see \eqref{second-method-beginning})
\begin{equation}
    \sum_{q\in\bq_{S_t}}\|Eg^{\ZT_{S_t}}\|_{\BL^p(q)}^p\sim\sum_{q\in\bq_{S_t}}\|Eg_{S_t}\|_{\BL^p(q)}^p\lesssim (\la_2\la_6)^{1-p/4}r^{\frac{3-p}{2}}\|g_{S_t}\|_{2}^p.
\end{equation}
Recall \eqref{second-method-beginning}. Summing up all $S_t\in\cs_t$ and by the broom estimate \eqref{broom-esti-2} (see also Remark \ref{broom-remark-0}, and notice $\ka_2(j)\geq1$) we have similar to \eqref{case3-2} that
\begin{equation}
\label{main-esti-3}
    \|Eg\|_{\BL^p(B_R)}^p\lessapprox R^{O(\e_0)} D(\la_2\la_6)^{1-p/4}r^{\frac{3-p}{2}} R^{-\frac{p-2}{4}}\|f\|_2^2\cdot\sup_{\theta}\|f_\theta\|_{L^2_{ave}(\theta)}^{p-2}.
\end{equation}
Also, by pigeonholing on $S_t\in\cs_t$, \eqref{polynomial-l2}, and \eqref{second-method-beginning}, we have similar to \eqref{case3-1} that
\begin{equation}
\label{main-esti-4}
    \|Eg\|_{\BL^p(B_R)}^p\lessapprox R^{O(\de)} D^{3-p}(\la_2\la_6)^{1-p/4}r^{\frac{3-p}{2}}\|f\|_2^p.
\end{equation}

\subsection{Numerology} 

Finally, combining and simplifying \eqref{main-esti-1}, \eqref{main-esti-2}, \eqref{main-esti-3}, \eqref{main-esti-4}, one hopes to find the smallest $p$ such that the minimum the following equations 
\begin{equation}
\label{system-0}
    \begin{cases}
    \Big(\dfrac{R}{r}\Big)^{\frac{3p-6}{2}}\la_2^\frac{2-p}{2}\la_3^{\frac{2-p}{2}}R^{3-p}R^\frac{p-2}{8}\la^\frac{4-p}{2}\\[2ex]
    (\mu\la_1^{-1})^{\frac{p-2}{2}}R^{3-p}\\[2ex]
    D^{3-p}r^{\frac{3-p}{2}}\la_2^\frac{4-p}{4}\la_6^\frac{4-p}{4}\\[2ex]
    DR^{-\frac{p-2}{4}}r^{\frac{3-p}{2}}\la_2^\frac{4-p}{4}\la_6^\frac{4-p}{4}
    \end{cases}
\end{equation}
is bounded above by $1$. Recall also the relations
\begin{enumerate}
    \item $\la_6|\cs_t|\lessapprox R^{O(\de)} \la_3|\cq_{\la_3}^{k}|\lesssim\la_6|\cs_t|$ (see Lemma \ref{second-method-lem}).
    \item $|\cs_t|\gtrapprox R^{-O(\de)}
    D^3$ (see Lemma \ref{second-method-lem} and \eqref{cs-t}).
    \item $|\cq_{\la_3}^{k}|\lessapprox|\cq_{\la_3,\la}^{k}|\lessapprox R^{O(\de)} \la\la_1R^{3/2}\mu^{-1}$ and $\mu\lessapprox R^{O(\de)}\la^{-1}\la_1R^{1/4}$ (see Lemma \ref{second-method-lem} item (8)), yielding $\la_6D^3\la_3^{-1}\lessapprox R^{O(\de)}\la\la_1R^{3/2}\mu^{-1}$.
\end{enumerate}

Use the estimate $\mu\lessapprox R^{O(\de)}\la^{-1}\la_1R^{1/4}$ to rewrite the first term of system \eqref{system-0}:
\begin{equation}
\label{system-1}
    \begin{cases}
    \Big(\dfrac{R}{r}\Big)^{\frac{3p-6}{4}}\la_2^\frac{2-p}{2}\la_3^{\frac{2-p}{2}}\la^\frac{2-p}{2}R^{3-p}R^\frac{p}{8}\mu^{-1}\la_1\\[2ex]
    (\mu\la_1^{-1})^{\frac{p-2}{2}}R^{3-p}\\[2ex]
    D^{3-p}r^{\frac{3-p}{2}}\la_2^\frac{4-p}{4}\la_6^\frac{4-p}{4}\\[2ex]
    DR^{-\frac{p-2}{4}}r^{\frac{3-p}{2}}\la_2^\frac{4-p}{4}\la_6^\frac{4-p}{4}
    \end{cases}
\end{equation}
Combining the first two equations to get rid of $\la_3, \la, \mu, \la_1$ (multiply the first one with the $\frac{p}{p-2}$ power of the second one, then use $\la_6D^3\la_3^{-1}\lessapprox R^{O(\de)}\la\la_1R^{3/2}\mu^{-1}$), one gets 
\begin{equation}
    \begin{cases}
    I=\Big(\dfrac{R}{r}\Big)^{\frac{3p-6}{4}}\la_2^\frac{2-p}{2}\la_6^{\frac{2-p}{2}}D^{\frac{6-3p}{2}}R^{\frac{-9p^2+38p-24}{8p-16}}\\[2ex]
    II=D^{3-p}r^{\frac{3-p}{2}}\la_2^\frac{4-p}{4}\la_6^\frac{4-p}{4}\\[2ex]
    III=DR^{-\frac{p-2}{4}}r^{\frac{3-p}{2}}\la_2^\frac{4-p}{4}\la_6^\frac{4-p}{4}
    \end{cases}
\end{equation}
Calculate $I\times (III^\frac{p}{4(p-2)}II^{1-\frac{p}{4(p-2)}})^{\frac{2(p-2)}{4-p}}$ to obtain
\begin{equation}
    R^\frac{p^3-11p^2+26p}{-4p^2+24p-32}r^\frac{-p^2+2p}{-4p+16}
\end{equation}
One gets that when $\log r/\log R\geq 2/3$ the above is bounded by 1 if $p>3.213$. Combining the result in \eqref{result-1} we have
\begin{equation}
    \|Eg\|_{\BL^p(B_R)}^p\lesssim R^{O(\de)}\|f\|_2^{2}\sup_{\theta}\|f_{\theta}\|_{L^2_{ave}}^{p-2}
\end{equation}
if $p>45/14$. This proves our main estimate \ref{main-esti}.

\begin{remark}
\rm
The first method in Section \ref{first-method} does not work very well when $r$ is much bigger than $R^{2/3}$. It mainly is because in this case, we have to focus on $r$-balls and $r\times r\times R$ tubes instead of $R/r^{1/2}$-balls and $R/r^{1/2}\times R/r^{1/2}\times R$-tubes. A (bad) consequence is that we cannot get \eqref{wolff-kakeya-1}, \eqref{estimate-c} from \eqref{estimate-a}, \eqref{estimate-b}, since each $r\times r\times R$ tube may contains a lot of $R/r^{1/2}\times R/r^{1/2}\times R$-tube, each of which is a target fat tube in the broom estimate \eqref{broom-esti-1}.

While for $r$ near $R^{2/3}$ our first method still shows some strength. In particular, when $r=R^{2/3}$ one can optimize the system \eqref{system-4} to obtain the range $p>3.2$. On the other hand, our second method gives a better range of $p$ when $r$ is larger. If we optimize these two methods for $r\geq R^{2/3}$, we can indeed have
\begin{equation}
    \|Eg\|_{\BL^p(B_R)}^p\lesssim R^{O(\de)}\|f\|_2^{2}\sup_{\theta}\|f_{\theta}\|_{L^2_{ave}}^{p-2}
\end{equation}
when $p>3.21$ and $r\geq R^{2/3}$.

\end{remark}

\bibliographystyle{alpha}
\bibliography{bibli}

\end{document}